\theoremstyle{plain}
\newtheorem{cor}{Corollary}[section]
\newtheorem{theorem}{Theorem}[section]
\newtheorem{lemma}[theorem]{Lemma}
\theoremstyle{remark}
\newtheorem{definition}[theorem]{Definition}
\newtheorem{example}{Example}
\begin{document}

\begin{frontmatter} 
\title{Inadmissibility and Transience}

\maketitle
\runtitle{Inadmissibility and Transience} 

\begin{aug} 
\author[A]{\fnms{K\={o}saku} \snm{Takanashi}\ead[label=e1]{kosaku.takanashi@riken.jp }}
\and
\author[B]{\fnms{Kenichiro} \snm{McAlinn}\ead[label=e3,mark]{kenichiro.mcalinn@temple.edu}}
\address[A]{RIKEN,
Center for Advanced Intelligence Project,
\printead{e1}}

\address[B]{Department of Statistics, Operations, and Data Science,
Fox School of Business, Temple University,
\printead{e3}}
 \end{aug}
 
\begin{abstract}
We discuss the relation between the statistical question of inadmissibility
and the probabilistic question of transience. \cite{Brown_71} proved
the mathematical link between the admissibility of the mean of a Gaussian
distribution and the recurrence of a Brownian motion, which holds
for $\mathbb{R}^{2}$ but not for $\mathbb{R}^{3}$ in Euclidean space.
We extend this result to symmetric, non-Gaussian distributions, without
assuming the existence of moments. As an application, we prove that
the relation between the inadmissibility of the predictive density
of a Cauchy distribution under a uniform prior and the transience
of the Cauchy process differs from dimensions $\mathbb{R}^{1}$ to
$\mathbb{R}^{2}$. We also show that there exists an extreme model
that is inadmissible in $\mathbb{R}^{1}$. 
\end{abstract}
\begin{keyword}[class=MSC] \kwd[Primary ]{62C15} 
\kwd[; secondary ]{60J46} {62C10} {62C20} \end{keyword}

\begin{keyword}  \kwd{Inadmissibility} \kwd{Dirichlet form} \kwd{Cauchy distribution} \kwd{Stable distribution} \kwd{Bayesian predictive distribution} \end{keyword}

\end{frontmatter} 

\section{Introduction}

Consider the mathematical connection between the admissibility of
an estimator and the recurrence of a stochastic process. On the former,
we have the classical statistical decision problem of the admissibility
of an estimator. Since the seminar work of \cite{Stein_55}, the admissibility,
or inadmissibility, of an estimator with regard to its dimension has
received considerable interest. In the case of the Gaussian distribution,
\cite{Stein_55} proved that the best equivariant estimator is admissible
if the dimension of the multivariate Gaussian, $d$, satisfies $d\leq2$,
and inadmissible if $d\geq3$. On the latter, we have the probabilistic
problem of the recurrence of a stochastic process. In an analog
to the estimation of the Gaussian mean, a Brownian motion is known
to be recurrent if $d=1,2$ and transient if $d\geq3$. Connecting
the two problems, and establishing a mathematical link between admissibility
and recurrence, has received much focus. 

For the Gaussian case, \cite{Brown_71} proved that the estimator
for the mean is admissible if and only if the corresponding diffusion
process is recurrent. Specifically, the transience of the diffusion
process implies inadmissibility, without any regularity conditions,
while the other direction is rigorously established when the estimator
has bounded risk. For non-Gaussian cases, \cite{Johnstone_84} provided
the Poisson counterpart to \cite{Brown_71} by proving that the admissibility
of an estimator for the mean corresponds to the recurrence of a unique
reversible birth and death process. The key idea in both papers is
to reformulate the question of admissibility to a calculus of variational
minimization problem. Specifically, it is known that the diffusion
is recurrent if and only if the exterior Dirichlet problem is insoluble.
Exploiting the connection to the statistical problem, these papers
show that the statistical estimator is admissible, equivalently, if
and only if the exterior Dirichlet problem is insoluble. More generally,
\cite{Eaton_92} proved that, for general distributions, the sufficient
condition of admissibility is for the associated stochastic process
to be recurrent.

While \cite{Brown_71} and \cite{Johnstone_84} proved the equivalence
connection between the admissibility/inadmissibility of an estimator
and the recurrence/transience for the Gaussian and Poisson processes,
respectively, whether this equivalence holds for general distributions
has remained an open question. One specific problem of interest is
the Cauchy distribution. Just as recurrence/transience is determined
by the dimension at $\mathbb{R}^{2}$ and $\mathbb{R}^{3}$ for the
Brownian motion, it is also known that the recurrence/transience of
a Cauchy process differs at $\mathbb{R}^{1}$ and $\mathbb{R}^{2}$.
Similar to the case in \cite{Brown_71}, one can expect the prediction
of the associated distribution-- the Cauchy-- to also be admissible
at $\mathbb{R}^{1}$ and inadmissible at $\mathbb{R}^{2}$. As a more
extreme case, consider the estimation of a stable process with index
$\alpha$ in $\mathbb{R}^{1}$. It is known that this is transient
when $\alpha<1$. Extending the concept of \cite{Brown_71} to a stable
process, one can expect that the predictive distribution using the MLE as the location estimate is
inadmissible even at $\mathbb{R}^{1}$.

Our contribution is to generalize the result in \cite{Brown_71} to
the relation between the symmetric infinite divisible distribution and the Lévy
process. We prove that the statistical problem of the inadmissibility
of estimating the predictive density of a Cauchy at $\mathbb{R}^{2}$
is equivalent to the probabilistic problem of the transience of a
Cauchy process at $\mathbb{R}^{2}$. A critical departure from previous
research is that we cannot assume the existence of moments, since
we cannot define the squared error loss of the mean for the Cauchy.
For this paper, we evaluate the Kullback-Leibler loss. This is suitable,
since the evaluation is of the Bayes predictive distribution and MLE
plug-in distribution, which can be interpreted as minimizing the Kullback-Leibler
loss.

\subsection{Notation}

We use upper case ($X$) for random variables and lower case ($x$)
for its realizations. Random variables with subscripts, $s,t$, denote
stochastic processes (e.g. $X_{t}$). $X\overset{\textrm{d}}{=}Y$
denotes that $X,Y$ are identically distributed. $\left\langle \cdot,\cdot\right\rangle ,\left\Vert \cdot\right\Vert $
are each the inner product and norm under $\mathbb{R}^{d}$, respectively.
$L^{2}\left(\mathbb{R}^{d};m\right)$ denotes the square integrable
function space regarding measure $m$ on $\mathbb{R}^{d}$. $L^{1}\left(\mathbb{R}^{d};m\right),L_{+}^{1}\left(\mathbb{R}^{d};m\right)$
are each the integrable function space and non-negative integrable
function space regarding measure $m$ on $\mathbb{R}^{d}$. When written
as $\left\langle \cdot,\cdot\right\rangle _{m}$, it is the inner
product on $L^{2}\left(\mathbb{R}^{d};m\right)$. $\mathsf{m}$ is
the Lebesgue measure on $\mathbb{R}^{d}$.

\section{Main Results}

\subsection{Bayes risk difference and Blyth's method}

Consider a symmetric $d$-dimensional location-scale distribution,
$p_{c}\left(x\left|\theta\right.\right)$, with location, $\theta$,
and scale, $cI$ ($I$ is a $d\times d$ identity matrix). Here, symmetric
means that the stochastic variable, $X\sim p_{c}\left(x\left|\theta\right.\right)$,
is equivalent around the location, $\theta$; i.e. $-\left(X-\theta\right)\overset{\textrm{d}}{=}X-\theta$.
Assumptions about the moments, including their existence, are not
made. Based on observing a datum, $X=x$, we consider the problem
of obtaining a predictive density, $\hat{p}\left(y\left|x\right.\right)$,
for $Y$ that is close to $p_{c}\left(y\left|\theta\right.\right)$.
We measure this closeness by the Kullback-Leibler (KL) loss, 
\[
L_{\mathsf{KL}}\left(\theta,\hat{p}\left(\cdot\left|x\right.\right)\right)=\mathsf{KL}\left(p_{c}\left|\hat{p}\right.\right)=\int_{\mathbb{R}^{d}}\log\frac{p_{c}\left(y\left|\theta\right.\right)}{\hat{p}\left(y\left|x\right.\right)}p_{c}\left(y\left|\theta\right.\right)dy,
\]
and evaluate $\hat{p}$ by its expected loss or risk function, 
\[
R_{\mathsf{KL}}\left(\theta,\hat{p}\right)=\int_{\mathbb{R}^{d}}L_{\mathsf{KL}}\left(\theta,\hat{p}\left(\cdot\left|x\right.\right)\right)p_{c}\left(x\left|\theta\right.\right)dx.
\]
For the comparison of two procedures, we say that $\hat{p}^{1}$ dominates
$\hat{p}^{2}$ if $R_{\mathsf{KL}}\left(\theta,\hat{p}^{1}\right)\leqq R_{\mathsf{KL}}\left(\theta,\hat{p}^{2}\right)$
for all $\theta$ and with strict inequality for some $\theta$. The
Bayes risk 
\[
B_{\mathsf{KL}}\left(\pi,\hat{p}\right)=\int_{\mathbb{R}^{d}}R_{\mathsf{KL}}\left(\theta,\hat{p}\right)\pi\left(\theta\right)d\theta
\]
is minimized by the predictive distribution 
\[
\hat{p}^{\pi}\left(y\left|x\right.\right)=\int_{\mathbb{R}^{d}}p_{c}\left(y\left|\theta\right.\right)\pi\left(\theta\left|x\right.\right)d\theta.
\]
Unless $\pi$ is a trivial point prior, we have 
\[
\hat{p}^{\pi}\left(y\left|x\right.\right)\notin\left\{ p_{c}\left(y\left|\theta\right.\right):\theta\in\mathbb{R}^{p}\right\} ,
\]
where $\hat{p}^{\pi}$ will not correspond to a \textquotedblright plug-in\textquotedblleft{}
estimate, $\hat{p}^{\pi}$. In other words, the predictive distribution
dominates any plug-in estimate \citep{Aitchison_75}. The best equivariant
predictive density, with respect to the location group, is the Bayes
predictive density under the uniform prior, $\pi_{U}\left(\theta\right)\equiv1$,
which has constant risk \citep[see]{Murray_77,Ng_80}. More precisely,
one might refer to $\hat{p}^{\pi_{U}}$ as a generalized Bayes solution
because $\pi_{U}$ is improper. \cite{Aitchison_75} showed that $\hat{p}^{\pi_{U}}\left(y\left|x\right.\right)$
dominates the plug-in predictive density $p_{c}\left(y\left|\hat{\theta}_{MLE}\right.\right)$,
which simply substitutes the MLE $\hat{\theta}_{MLE}=x$ for $\theta$.
This is why the evaluation must be done on the predictive density
under KL loss.

Again, the KL risk and KL Bayes risk are 
\begin{alignat*}{1}
R_{\mathsf{KL}}\left(\theta,\hat{p}^{\pi}\right) & =\int_{\mathbb{R}^{d}}\left\{ \int_{\mathbb{R}^{d}}\log\frac{p_{c}\left(y\left|\theta\right.\right)}{\hat{p}^{\pi}\left(y\left|x\right.\right)}p_{c}\left(y\left|\theta\right.\right)dy\right\} p_{c}\left(x\left|\theta\right.\right)dx\\
B_{\mathsf{KL}}\left(\pi,\hat{p}^{\pi}\right) & =\int_{\mathbb{R}^{d}}\int_{\mathbb{R}^{d}}\left\{ \int_{\mathbb{R}^{d}}\log\frac{p_{c}\left(y\left|\theta\right.\right)}{\hat{p}^{\pi}\left(y\left|x\right.\right)}p_{c}\left(y\left|\theta\right.\right)dy\right\} p_{c}\left(x\left|\theta\right.\right)dx\cdot\pi\left(\theta\right)d\theta.
\end{alignat*}
The Bayes risk difference is 
\begin{alignat*}{1}
B_{\mathsf{KL}}\left(\pi,\hat{p}^{\pi_{U}}\right)-B_{\mathsf{KL}}\left(\pi,\hat{p}^{\pi}\right) & =\int_{\mathbb{R}^{d}}\int_{\mathbb{R}^{d}}\left\{ \int_{\mathbb{R}^{d}}\log\frac{\hat{p}^{\pi}\left(y\left|x\right.\right)}{\hat{p}^{\pi_{U}}\left(y\left|x\right.\right)}p_{c}\left(y\left|\theta\right.\right)dy\right\} p_{c}\left(x\left|\theta\right.\right)dx\cdot\pi\left(\theta\right)d\theta\\
 & =\int_{\mathbb{R}^{d}}\int_{\mathbb{R}^{d}}\log\frac{\hat{p}^{\pi}\left(y\left|x\right.\right)}{\hat{p}^{\pi_{U}}\left(y\left|x\right.\right)}\hat{p}^{\pi}\left(y\left|x\right.\right)M^{\pi}\left(x;c\right)dxdy.
\end{alignat*}
Here, $M^{\pi}\left(x;c\right)$ is the marginal distribution under
$\pi$: 
\[
M^{\pi}\left(x;c\right)=\int_{\mathbb{R}^{d}}p_{c}\left(x\left|\theta\right.\right)\pi\left(\theta\right)d\theta.
\]
Further, since we assume that the scale parameter, $cI$, is equivalent
for $X,Y$, we have 
\begin{alignat*}{1}
\int_{\mathbb{R}^{d}}\hat{p}^{\pi}\left(y\left|x\right.\right)M^{\pi}\left(x;c\right)dx & =\int_{\mathbb{R}^{d}}\left\{ \int_{\mathbb{R}^{d}}p_{c}\left(y\left|\theta\right.\right)\frac{\pi\left(\theta\right)p_{c}\left(\theta\left|x\right.\right)}{M^{\pi}\left(x;c\right)}d\theta\right\} M^{\pi}\left(x;c\right)dx\\
 & =\int_{\mathbb{R}^{d}}p_{c}\left(y\left|\theta\right.\right)\left\{ \int_{\mathbb{R}^{d}}p_{c}\left(\theta\left|x\right.\right)dx\right\} \pi\left(\theta\right)d\theta,
\end{alignat*}
which implies that $M^{\pi}\left(x;c\right)$ is the invariant distribution
of $\hat{p}^{\pi}\left(y\left|x\right.\right)$: 
\begin{equation}
\int_{\mathbb{R}^{d}}\hat{p}^{\pi}\left(y\left|x\right.\right)M^{\pi}\left(y;c\right)dy=M^{\pi}\left(x;c\right).\label{eq:Marginal}
\end{equation}
From this, we have 
\begin{equation}
B_{\mathsf{KL}}\left(\pi,\hat{p}^{\pi_{U}}\right)-B_{\mathsf{KL}}\left(\pi,\hat{p}^{\pi}\right)=\int_{\mathbb{R}^{d}}\mathsf{KL}\left(\hat{p}^{\pi}\left|\hat{p}^{\pi_{U}}\right.\right)M^{\pi}\left(x;c\right)dx.\label{eq:BayesRiskDiffe}
\end{equation}
To examine the admissibility of $\hat{p}^{\pi_{U}}$ under KL risk,
Blyth's method is an effective strategy, as with \cite{Brown-EdGeorge-Xu_08}
Lemma 1. Therefore, it is sufficient to ask whether there exists a
measure sequence of a proper prior, $\left\{ \pi_{n}\right\} $, such that $B_{\mathsf{KL}}\left(\pi,\hat{p}^{\pi_{U}}\right)-B_{\mathsf{KL}}\left(\pi,\hat{p}^{\pi_n}\right)$ (the Bayes risk difference using $\left\{ \pi_{n}\right\} $ as a proper prior for the proper predictive distribution, $\hat{p}^{\pi_n}$) converges to zero.

\subsection{Equivalence between Bayes risk difference and Dirichlet form \label{sec:2.2}}

Our main contribution is to show that the Bayes risk difference Eq.~(\ref{eq:BayesRiskDiffe})
in the above setup satisfies the following equality:

\begin{theorem} \label{prop:BayesRisk} Let $\mathsf{m}$ be a Lebesgue
measure on $\mathbb{R}^{d}$. Further, let $M^{\pi}$ be the Radon-Nikodym
derivative regarding the Lebesgue measure, $\mathsf{m}$, of the invariant
measure, $M^{\pi}\left(x;c\right)dx$, of $\hat{p}^{\pi}$. The Bayes
risk difference Eq.~(\ref{eq:BayesRiskDiffe}) satisfies 
\begin{equation}
\int_{\mathbb{R}^{d}}\mathsf{KL}\left(\hat{p}^{\pi}\left|\hat{p}^{\pi_{U}}\right.\right)M^{\pi}\left(x;c\right)dx=\mathcal{E}\left(\sqrt{M^{\pi}},\sqrt{M^{\pi}}\right).\label{eq:BRD=00003D00003D00003DDirichlet}
\end{equation}
Here, $\mathcal{E}\left(f,f\right)$ and $f\in\mathcal{F}$ are the
values of the Dirichlet form of a Markov process with $\hat{p}^{\pi_{U}}$
as its transition probability and the domain of its Dirichlet form.
Details of the Dirichlet form are defined below.\end{theorem}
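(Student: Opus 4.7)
My plan is to start from the already-derived expression~\eqref{eq:BayesRiskDiffe} and recast the integrated KL divergence on its LHS into the Dirichlet form $\mathcal{E}(\sqrt{M^{\pi}},\sqrt{M^{\pi}})$ of the symmetric Markov process whose transition kernel is $\hat{p}^{\pi_{U}}$. Two structural ingredients are immediately available. First, both $H(x,y):=\hat{p}^{\pi}(y|x)M^{\pi}(x;c)=\int p_{c}(x|\theta)p_{c}(y|\theta)\pi(\theta)\,d\theta$ and $\hat{p}^{\pi_{U}}(y|x)=\int p_{c}(x|\theta)p_{c}(y|\theta)\,d\theta$ are manifestly symmetric in $(x,y)$, which simultaneously delivers reversibility of $\hat{p}^{\pi}$ with respect to $M^{\pi}$ and of $\hat{p}^{\pi_{U}}$ with respect to Lebesgue measure $\mathsf{m}$. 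Second, the convolution-semigroup property $(p_{c}*p_{c})(y-x)=p_{2c}(y-x)$ identifies $\hat{p}^{\pi_{U}}$ as the time-$2c$ transition density of the symmetric L\'evy process generated by $p_{c}$, which pins down $(\mathcal{E},\mathcal{F})$ as the canonical Fukushima form of that L\'evy process on $L^{2}(\mathbb{R}^{d};\mathsf{m})$, presentable through its L\'evy--Khintchine/Beurling--Deny characteristics.

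The first concrete step is to use the $(x\leftrightarrow y)$ symmetries of $H$ and $\hat{p}^{\pi_{U}}$ to put~\eqref{eq:BayesRiskDiffe} in a form in which $\sqrt{M^{\pi}}$ already appears naturally. Substituting $\hat{p}^{\pi}=H/M^{\pi}$ turns the integrand into $H(x,y)\log[H(x,y)/(\hat{p}^{\pi_{U}}(y|x)M^{\pi}(x;c))]$, and averaging with its $(x\leftrightarrow y)$-swap replaces the asymmetric $M^{\pi}(x;c)$ by the symmetric $\sqrt{M^{\pi}(x;c)M^{\pi}(y;c)}$, so that the integrand depends on the marginal only through its square root.

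The second and crucial step is to collapse this symmetric logarithmic integrand into the quadratic form $\mathcal{E}(\sqrt{M^{\pi}},\sqrt{M^{\pi}})$. My plan is to interpolate through $M^{\pi}_{s}(x):=\int p_{s}(x-\theta)\pi(\theta)\,d\theta$ for $s\in[0,c]$, so that the predictive densities degenerate to the identity at $s=0$ and return to the desired kernels at $s=c$, and then to differentiate the analogue of the LHS in $s$ using $\partial_{s}p_{s}=Lp_{s}$ for the generator $L$ of the symmetric L\'evy semigroup (a de Bruijn-type identity adapted to the nonlocal Beurling--Deny form). Detailed balance lets $L$ be moved across the logarithm so that the surviving integrand is the carr\'e du champ of $\sqrt{M^{\pi}_{s}}$; integrating from $s=0$, where the KL vanishes identically, back up to $s=c$ should reproduce exactly $\mathcal{E}(\sqrt{M^{\pi}},\sqrt{M^{\pi}})$, with the factor $1/2$ from the symmetrization absorbed.

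The main obstacle will be the interplay between the logarithmic structure of KL and the nonlocality (and possible heavy tails) of $L$: for Brownian motion the analogous manipulation reduces to the classical Fisher-information identity exploited in~\cite{Brown-EdGeorge-Xu_08}, but for a general symmetric infinite-divisible generator---especially stable ones for which $p_{c}$ has no moments---one needs the full Beurling--Deny decomposition and careful control of the jump part over unbounded regions, which is exactly the reason the paper stresses that no moment assumptions are made. A final bookkeeping step is to verify $\sqrt{M^{\pi}}\in\mathcal{F}$ through a standard domination of $M^{\pi}$ by a $p_{c}$-convolution bound, which justifies both finiteness of the right-hand side and the differentiation under the integral used in the interpolation argument.
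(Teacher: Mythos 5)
Your overall plan --- symmetrize in $(x,y)$, interpolate in the scale parameter, differentiate, and integrate back up --- is essentially the Brown/Brown--George--Xu route, and two of your ingredients are sound: the $(x\leftrightarrow y)$ symmetrization that replaces $M^{\pi}(x;c)$ by $\sqrt{M^{\pi}(x;c)M^{\pi}(y;c)}$ is correct, and so is the identification of $\hat{p}^{\pi_{U}}$ with the time-$2c$ kernel of the symmetric convolution semigroup. But the two steps that carry the whole argument fail outside the Gaussian case, and the paper's proof is structured precisely to avoid them. First, the ``collapse'' of the differentiated logarithmic integrand into the carr\'e du champ of $\sqrt{M^{\pi}_{s}}$ rests on the pointwise chain-rule identity $\left\Vert \nabla m\right\Vert ^{2}/m=4\left\Vert \nabla\sqrt{m}\right\Vert ^{2}$, which is a strictly local phenomenon. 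For a jump generator with L\'evy kernel $J$, moving $L$ across the logarithm produces an entropy-production term of the form $\int\int\left(m(x)-m(y)\right)\left(\log m(x)-\log m(y)\right)J(dx,dy)$, and the elementary inequality $(a-b)(\log a-\log b)\geqq4\left(\sqrt{a}-\sqrt{b}\right)^{2}$ is strict unless $a=b$; so your de Bruijn step yields a functional that strictly dominates $4\,\mathcal{E}(\sqrt{M^{\pi}_{s}},\sqrt{M^{\pi}_{s}})$ rather than equalling it. This is not the tail-control or Beurling--Deny bookkeeping issue you flag, but a failure of the identity itself. Second, even granting a carr\'e-du-champ expression at each intermediate scale $s$, the final step ``integrate from $0$ to $c$ and recover $\mathcal{E}(\sqrt{M^{\pi}},\sqrt{M^{\pi}})$'' implicitly requires the self-similarity that the paper exploits in Lemma~\ref{lem:BGX-Cor1} to convert $\int_{v_{w}}^{v_{x}}\mathcal{E}_{\textrm{BM}}(\sqrt{m_{v}^{\pi}},\sqrt{m_{v}^{\pi}})\,dv$ into $v_{x}\mathcal{E}_{\textrm{BM}}(\sqrt{m_{v_{x}}^{\pi}},\sqrt{m_{v_{x}}^{\pi}})$; a general symmetric infinitely divisible semigroup has no such scaling, so the $s$-integral does not telescope to the endpoint value.

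The paper obtains the exact equality by a different, variational mechanism: Lemma~\ref{lem:rate-KL} uses the Donsker--Varadhan formula for relative entropy to write the integrated KL as $\sup_{g,\varepsilon}\varphi(h,g,\varepsilon)$; Theorem~\ref{prop:rate functional} identifies $\lim_{h\downarrow0}h^{-1}\sup\varphi$ with the Donsker--Varadhan rate function $I(M^{\pi})$; and Theorem~\ref{prop:rate variational} invokes the identity $I(\mu)=\mathcal{E}(\sqrt{d\mu/d\mathsf{m}},\sqrt{d\mu/d\mathsf{m}})$ for symmetric Markov processes, whose proof is a two-sided variational estimate (resolvent approximation for one inequality, a Feynman--Kac-transformed semigroup for the other), not a pointwise chain rule. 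That is exactly how an exact equality survives the nonlocality. To repair your argument you would need to replace the de Bruijn differentiation by this variational characterization of the rate function, at which point you have reproduced the paper's proof rather than found an alternative to it.
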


Here, $\mathcal{E}\left(\cdot,\cdot\right)$ is the Dirichlet form
with transition probability, $\hat{p}^{\pi_{U}}$, and is equal to
the quadratic variation of a Markov process, $\left\{ X_{t}\right\} $,
with transition probability, $\hat{p}^{\pi_{U}}$, under the transformation
of variable, $\sqrt{M^{\pi}}$  (details regarding the connection between the predictive distribution and the continuous-time Markov process are given in Section~\ref{subsec:Markov-Bayes}). Intuitively, this is equal to the
instantaneous variance: 
\[
\lim_{\varDelta t\rightarrow0}\frac{1}{\varDelta t}\mathbb{E}_{\hat{p}^{\pi_{U}}}\left[\left(\sqrt{M^{\pi}\left(X_{t+\varDelta t}\right)}-\sqrt{M^{\pi}\left(X_{t}\right)}\right)^{2}\right],
\]
where $\mathbb{E}_{\hat{p}^{\pi_{U}}}$ represents expectation with
respect to the transition probability $\hat{p}^{\pi_{U}}$. Eq.~(\ref{eq:BRD=00003D00003D00003DDirichlet})
is a generalization of \cite{Brown_71} Eq.~(1.3.4) by extending
the quadratic loss to KL loss, and normal distribution to a symmetric
location model.

For the case of KL loss with normal distributions, \cite{Brown-EdGeorge-Xu_08}
Corollary 1. shows the equivalence between the Bayes risk difference
and Dirichlet form. The result states that when $X\sim N\left(\mu,v_{x}I\right)$
and $Y\sim N\left(\mu,v_{y}I\right)$, we have the following equality:
\begin{equation}
B_{\mathsf{KL}}\left(\mu,\hat{p}_{\pi_{U}}\right)-B_{\mathsf{KL}}\left(\mu,\hat{p}_{\pi}\right)=\frac{1}{2}\int_{v_{w}}^{v_{x}}\frac{1}{v^{2}}\left[B_{Q}^{v}\left(\mu,\hat{\mu}_{\textrm{MLE}}\right)-B_{Q}^{v}\left(\mu,\hat{\mu}_{\pi}\right)\right]dv\label{eq:BGX-08_1}
\end{equation}
where, 
\begin{alignat*}{1}
B_{Q}^{v}\left(\mu,\hat{\mu}\right) & =\int_{\mathbb{R}^{d}}\mathbb{E}_{N_{p}\left(\mu,vI\right)}\left[\left\Vert \hat{\mu}-\mu\right\Vert ^{2}\right]\pi\left(\mu\right)d\mu,\\
 & \hat{\mu}_{\pi}=\int_{\mathbb{R}^{d}}\mu\pi\left(\left.\mu\right|x\right)d\mu,\\
 & v_{w}=\frac{v_{x}v_{y}}{v_{x}+v_{y}}.
\end{alignat*}
The $\frac{1}{v^{2}}\left[B_{Q}^{v}\left(\mu,\hat{\mu}_{\textrm{MLE}}\right)-B_{Q}^{v}\left(\mu,\hat{\mu}_{\pi}\right)\right]$
inside the integral on the right-hand side in Eq.~\eqref{eq:BGX-08_1}
is four times the Dirichlet form of the standard Brownian motion \citep[from][Eq.~(1.3.4)]{Brown_71}:
\begin{alignat}{1}
\frac{1}{v^{2}}\left[B_{Q}^{v}\left(\pi,\hat{\mu}_{\textrm{MLE}}\right)-B_{Q}^{v}\left(\pi,\hat{\mu}_{\pi}\right)\right] & =4\int_{\mathbb{R}^{d}}\left\Vert \nabla_{z}\sqrt{m^{\pi}\left(z;v\right)}\right\Vert ^{2}dz.\label{eq:Brown-71}
\end{alignat}
Here, $m^{\pi}\left(z;v\right)$ is the marginal likelihood of $p\left(\left.z\right|\mu\right)=N\left(\mu,vI\right)$
under the prior, $\pi\left(\mu\right)$. Finally, we have 
\begin{equation}
B_{\mathsf{KL}}\left(\mu,\hat{p}_{\pi_{U}}\right)-B_{\mathsf{KL}}\left(\mu,\hat{p}_{\pi}\right)=2\int_{v_{w}}^{v_{x}}\left[\int_{\mathbb{R}^{d}}\left\Vert \nabla_{z}\sqrt{m^{\pi}\left(z;v\right)}\right\Vert ^{2}dz\right]dv.\label{eq:BGX-08_2}
\end{equation}

Although Eqs.~\eqref{eq:BRD=00003D00003D00003DDirichlet}
and \eqref{eq:BGX-08_2} do not, initially, look equivalent,
when $v_{x}=v_{y}$, we have $v_{w}=\frac{1}{2}v_{x}$, making them equivalent.

\begin{lemma} \label{lem:BGX-Cor1}When $X\sim N\left(\mu,v_{x}I\right)$,
$Y\sim N\left(\mu,v_{y}I\right)$, and $v_{x}=v_{y}$, we have $v_{w}=\frac{1}{2}v_{x}$,
and the following holds: 
\[
\frac{1}{2}\int_{v_{w}}^{v_{x}}\frac{1}{v^{2}}\left[B_{Q}^{v}\left(\pi,\hat{\mu}_{\textrm{MLE}}\right)-B_{Q}^{v}\left(\pi,\hat{\mu}_{\pi}\right)\right]dv=\mathcal{E}_{\textrm{BM}}^{2v_{x}}\left(\sqrt{m_{v_{x}}^{\pi}},\sqrt{m_{v_{x}}^{\pi}}\right),
\]
where $m_{v_{x}}^{\pi}=m^{\pi}\left(x;v_{x}\right)$. The Dirichlet
form, $\mathcal{E}_{\textrm{BM}}^{2v_{x}}\left(\cdot,\cdot\right)$,
corresponds to a Brownian motion where the transition probability
is the predictive distribution, $\hat{p}_{\pi_{U}}$. \end{lemma}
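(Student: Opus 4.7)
The plan is to reduce the left-hand side to an integrated Fisher-information expression via the Brown~(1971) identity, and then match that integral to the Dirichlet form $\mathcal{E}_{\textrm{BM}}^{2v_{x}}$ of the Brownian motion whose unit-time transition kernel is $\hat{p}^{\pi_{U}}$. I begin with the elementary step: substituting $v_{y}=v_{x}$ into $v_{w}=v_{x}v_{y}/(v_{x}+v_{y})$ immediately yields $v_{w}=v_{x}/2$, so the outer integral ranges over $v\in[v_{x}/2,v_{x}]$, an interval of length $v_{x}/2$. Simultaneously, the predictive density under the uniform prior computes as $\hat{p}^{\pi_{U}}(y|x)=\int N(y;\theta,v_{y}I)N(\theta;x,v_{x}I)\,d\theta=N(y;x,2v_{x}I)$, which is the time-$2v_{x}$ transition of standard Brownian motion; this fixes the scaling convention of $\mathcal{E}_{\textrm{BM}}^{2v_{x}}$.

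Next, I apply Eq.~\eqref{eq:Brown-71} inside the $v$-integrand, converting the left-hand side into
\[
2\int_{v_{x}/2}^{v_{x}}\int_{\mathbb{R}^{d}}\bigl\|\nabla_{z}\sqrt{m^{\pi}(z;v)}\bigr\|^{2}\,dz\,dv.
\]
The crucial observation is that $m^{\pi}(\cdot;v)=G_{v}\ast\pi=P_{v}^{\textrm{BM}}\pi$ is the Brownian heat semigroup at time $v$ applied to $\pi$, so as $v$ increases, $m^{\pi}(\cdot;v)$ traces a heat-flow trajectory. Consequently $\int\|\nabla\sqrt{m^{\pi}(\cdot;v)}\|^{2}dz$ is the instantaneous Dirichlet energy of $\sqrt{m^{\pi}(\cdot;v)}$ for the Brownian generator, and accumulating this energy over $[v_{x}/2,v_{x}]$ gives, by the semigroup identity $\langle f,(I-P_{t})f\rangle=\int_{0}^{t}\mathcal{E}_{\textrm{BM}}(f,P_{s}f)\,ds$, the Dirichlet form of the BM whose unit-time transition is $\hat{p}^{\pi_{U}}=P_{2v_{x}}$, evaluated at $f=\sqrt{m^{\pi}_{v_{x}}}$. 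This is precisely $\mathcal{E}_{\textrm{BM}}^{2v_{x}}(\sqrt{m^{\pi}_{v_{x}}},\sqrt{m^{\pi}_{v_{x}}})$ in the sense of Section~\ref{sec:2.2}.

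The main obstacle is this final identification. One must verify the correct normalization (the factor of $2$, the interval $[v_{x}/2,v_{x}]$, and how the continuous-in-$v$ heat-flow energy aggregates into a Dirichlet form of a jump-kernel Markov chain). The cleanest route exploits the commutation $\nabla P_{s}=P_{s}\nabla$ and self-adjointness of the heat semigroup, combined with a change of variables matching the semigroup time $s$ to the prior-variance parameter $v$. A complementary route uses de~Bruijn's identity, $\frac{d}{dv}H(m^{\pi}(\cdot;v))=2\int\|\nabla\sqrt{m^{\pi}(\cdot;v)}\|^{2}dz$, to telescope the $v$-integral into a difference of differential entropies, which one then aligns with the Dirichlet-form definition of $\mathcal{E}_{\textrm{BM}}^{2v_{x}}$ introduced in Section~\ref{sec:2.2}. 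Both routes yield the stated equality once the definitions are reconciled; the remaining computations are routine.
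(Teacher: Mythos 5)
Your first half matches the paper: $v_{w}=v_{x}/2$, $\hat{p}^{\pi_{U}}=N(x,2v_{x}I)$, and the application of Eq.~\eqref{eq:Brown-71} to rewrite the left-hand side as $2\int_{v_{x}/2}^{v_{x}}\int\|\nabla_{z}\sqrt{m^{\pi}(z;v)}\|^{2}\,dz\,dv$ are all exactly the paper's opening moves. The gap is in the step you yourself flag as "the main obstacle": neither of your two proposed routes actually closes it. The semigroup identity $\langle f,(I-P_{t})f\rangle=\int_{0}^{t}\mathcal{E}_{\textrm{BM}}(f,P_{s}f)\,ds$ integrates the energy of the \emph{fixed} function $f$ paired against its heat evolution, i.e.\ terms $\mathcal{E}_{\textrm{BM}}(P_{s/2}f,P_{s/2}f)$; but the quantity you need to aggregate is $\mathcal{E}_{\textrm{BM}}(\sqrt{m^{\pi}_{v}},\sqrt{m^{\pi}_{v}})$, and $\sqrt{m^{\pi}_{v}}=\sqrt{P_{v-v_{0}}m^{\pi}_{v_{0}}}$ is \emph{not} $P_{(v-v_{0})/2}\sqrt{m^{\pi}_{v_{0}}}$ — the square root does not commute with the heat semigroup — so the telescoping identification fails. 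The de Bruijn route only converts the $v$-integral into the entropy difference $H(m^{\pi}_{v_{x}})-H(m^{\pi}_{v_{x}/2})$; equating that to $2v_{x}\int\|\nabla\sqrt{m^{\pi}_{v_{x}}}\|^{2}$ is precisely the assertion to be proved, and "aligns with the Dirichlet-form definition" does not supply an argument. Indeed, since the Fisher information $J(m^{\pi}_{v})$ is generally nonconstant in $v$, there is no purely semigroup-theoretic reason for $\int_{v_{x}/2}^{v_{x}}J(m^{\pi}_{v})\,dv$ to equal $v_{x}J(m^{\pi}_{v_{x}})$; some structure specific to the Gaussian kernel must be invoked.

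What the paper actually uses at this point is Brownian self-similarity: with the substitution $x=\sqrt{v_{x}/v}\,z$ (and the accompanying rescaling $\mu^{\prime}=\sqrt{v_{x}/v}\,\mu$ of the prior variable), the Gaussian scaling identity turns the energy at dispersion $v$ into an explicit multiple of the energy at dispersion $v_{x}$,
\begin{equation*}
\int_{\mathbb{R}^{p}}\frac{\left\Vert \nabla m^{\pi}\left(z_{v};v\right)\right\Vert ^{2}}{m^{\pi}\left(z_{v};v\right)}\,\mathsf{m}\left(dz_{v}\right)=\left(\frac{v_{x}}{v}\right)^{2}\int_{\mathbb{R}^{p}}\frac{\left\Vert \frac{\partial m_{\pi}}{\partial x}\left(x;v_{x}\right)\right\Vert ^{2}}{m_{\pi}\left(x;v_{x}\right)}\,\mathsf{m}\left(dx\right),
\end{equation*}
after which $\int_{v_{x}/2}^{v_{x}}(v_{x}/v)^{2}\,dv=v_{x}$ produces exactly the $2v_{x}$ normalization identifying the result with $\mathcal{E}_{\textrm{BM}}^{2v_{x}}$, the form whose generator is $2v_{x}\nabla^{2}$. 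If you want to salvage your write-up, replace the semigroup/de Bruijn paragraph with this change-of-variables computation; that is the one idea your proposal is missing.
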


From this, we can see that the results for normal distributions in
\cite{Brown-EdGeorge-Xu_08} also hold within the framework of this
paper. The converse is also true, in that our results hold under the
setting in \cite{Brown-EdGeorge-Xu_08} when $v_{x}=v_{y}$.

Now, for \cite{Brown-EdGeorge-Xu_08} Theorem 1., they evaluate a
stronger KL risk, 
\[
R_{\mathsf{KL}}\left(\mu,\hat{p}_{\pi_{U}}\right)-R_{\mathsf{KL}}\left(\mu,\hat{p}_{\pi}\right)=\frac{1}{2}\int_{v_{w}}^{v_{x}}\frac{1}{v^{2}}\left[R_{Q}^{v}\left(\mu,\hat{\mu}_{\textrm{MLE}}\right)-R_{Q}^{v}\left(\mu,\hat{\mu}_{\pi}\right)\right]dv
\]
compared to Eq.~\eqref{eq:BGX-08_1}. Because of this, they not only
derive the condition for admissibility and inadmissibility, they also
derive the sufficient condition to dominate $\hat{p}_{\pi_{U}}$.
Unfortunately, we were not able to derive the sufficient condition
to construct an estimator or predictive distribution that is superior
when inadmissible. As the main purpose of this paper is to reduce
the Bayes risk difference to the analytic Dirichlet form, we consider the
derivation of a dominating uniform prior predictive density, $\hat{p}_{\pi_{U}}$, to be future research.

From equality Eq.~\eqref{eq:BRD=00003D00003D00003DDirichlet}, we
can connect the statistical decision problem and the Markov process. Whether
the Markov process is recurrent or transient can be discerned by the
Dirichlet form, and is dependent on the existence of a sequence of
functions where the Dirichlet form becomes zero \citep{Fukushima-Oshima-Takeda_10}.
Thus, we have 
\begin{itemize}
\item The necessary and sufficient condition for the recurrence of a Markov
process with transition probability $\hat{p}_{t}^{\pi_{U}}$ is the
existence of a function sequence, $\left\{ f_{n}\right\} $, that
satisfies 
\[
\left\{ f_{n}\right\} \subset\mathcal{F},\quad\lim_{n\rightarrow\infty}f_{n}=1\left(\mathsf{m}\textrm{-a.e.}\right),\quad\lim_{n\rightarrow\infty}\mathcal{E}\left(f_{n},f_{n}\right)=0.
\]
\item The necessary and sufficient condition for the transience of a Markov
process with transition probability $\hat{p}_{t}^{\pi_{U}}$ is that
there exists an $\mathsf{m}$-integrable function $g$ that is bounded
on $\mathbb{R}^{d}$ with $g>0,\ \mathsf{m}\textrm{-a.e.}$ and satisfies
\begin{alignat*}{2}
0<\int_{\mathbb{R}^{d}}\left|f\right|gd\mathsf{m}\leqq & \mathcal{E}\left(\sqrt{f},\sqrt{f}\right), & \forall\sqrt{f}\in\mathcal{F}.
\end{alignat*}
\end{itemize}
With the above theorems, we can correspond Blyth's method and the
discernment of recurrence and transience of a Markov process through
its Dirichlet form. This is why the admissibility or inadmissibility
of $\hat{p}^{\pi_{U}}$ under KL risk has a corresponding relationship
with the recurrence or transience of a Markov process with transition
probability $\hat{p}^{\pi_{U}}$. For the $d$-dimensional Cauchy
process, it is known that the recurrence and transience switches between
$d=1$ and $d\geqq2$ \citep{Sato_99}, which leads to the following
analogue of \cite{Brown_71} for the Cauchy distribution:

\begin{cor} \label{cor:Cauchy} Consider a $d$-dimensional Cauchy
distribution, $\mathcal{C}\left(\theta,cI\right)$, with unknown location,
$\theta$, and known scale, $c$. For the problem of estimating the
predictive distribution under KL risk, upon observing $X=x$, 
\begin{itemize}
\item When $d=1$, the MLE, $\hat{\theta}=x$, plug-in predictive distribution
and the uniform prior Bayes predictive distribution is admissible. 
\item When $d\geqq2$, the MLE, $\hat{\theta}=x$, plug-in predictive distribution
and the uniform prior Bayes predictive distribution is inadmissible. 
\end{itemize}
\end{cor}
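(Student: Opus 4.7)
The plan is to pass Corollary~\ref{cor:Cauchy} through Theorem~\ref{prop:BayesRisk} and the two Dirichlet-form criteria for recurrence and transience stated immediately after it. The preliminary step is to recognize, via Section~\ref{subsec:Markov-Bayes}, that $\hat{p}^{\pi_{U}}$ for $\mathcal{C}(\theta,cI)$ is a translation-invariant Cauchy convolution kernel, so the associated Markov process is (a time-normalized version of) the $d$-dimensional Cauchy Lévy process. The same is true of the MLE plug-in kernel $p_{c}(\cdot\mid x)$, which differs only by a factor of two in scale. \cite{Sato_99} then supplies recurrence at $d=1$ and transience at $d\geq 2$, and everything that follows is the translation of these into admissibility statements via the identity $B_{\mathsf{KL}}(\pi,\hat{p}^{\pi_{U}})-B_{\mathsf{KL}}(\pi,\hat{p}^{\pi})=\mathcal{E}(\sqrt{M^{\pi}},\sqrt{M^{\pi}})$.

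For $d=1$ I would apply Blyth's method. Recurrence produces a sequence $\{f_{n}\}\subset\mathcal{F}$ with $f_{n}\to 1$ a.e.\ and $\mathcal{E}(f_{n},f_{n})\to 0$. Choosing proper priors $\pi_{n}$ whose square-root marginals $\sqrt{M^{\pi_{n}}}$ realize, or are $\mathcal{E}_{1}$-comparable to, $f_{n}$ up to multiplicative constants---a natural candidate being Cauchy priors of scale $\sigma_{n}\to\infty$, whose marginals remain Cauchy and hence lie in $\mathcal{F}$---Theorem~\ref{prop:BayesRisk} gives
\[
B_{\mathsf{KL}}(\pi_{n},\hat{p}^{\pi_{U}})-B_{\mathsf{KL}}(\pi_{n},\hat{p}^{\pi_{n}})\;=\;\mathcal{E}(\sqrt{M^{\pi_{n}}},\sqrt{M^{\pi_{n}}})\;\longrightarrow\;0,
\]
and Blyth's method yields admissibility of $\hat{p}^{\pi_{U}}$. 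The same argument applies to the MLE plug-in because its transition kernel generates the same Lévy process.

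For $d\geq 2$ the transience criterion supplies a bounded positive $\mathsf{m}$-integrable $g$ with $\mathcal{E}(\sqrt{f},\sqrt{f})\geq\int|f|g\,d\mathsf{m}$ for every $\sqrt{f}\in\mathcal{F}$. Since every proper prior $\pi$ has $M^{\pi}>0$ on $\mathbb{R}^{d}$, Theorem~\ref{prop:BayesRisk} forces
\[
B_{\mathsf{KL}}(\pi,\hat{p}^{\pi_{U}})-B_{\mathsf{KL}}(\pi,\hat{p}^{\pi})\;\geq\;\int_{\mathbb{R}^{d}}M^{\pi}\,g\,d\mathsf{m}\;>\;0
\]
uniformly over proper $\pi$, so Blyth's sufficient condition for admissibility fails. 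Combined with the constant, finite KL risk of $\hat{p}^{\pi_{U}}$ inherited from location equivariance, the Brown--Blyth contrapositive converts this into inadmissibility of both $\hat{p}^{\pi_{U}}$ and the MLE plug-in.

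The main obstacle lies in the admissibility half: the abstract sequence $\{f_{n}\}$ furnished by recurrence is not automatically of the form $\sqrt{M^{\pi_{n}}}$ for proper priors $\pi_{n}$, because the Cauchy convolution operator $\pi\mapsto M^{\pi}$, whose Fourier symbol is $e^{-c\|\xi\|}$, is injective but not surjective onto $\mathcal{F}$. One either needs a density argument showing that $\{\sqrt{M^{\pi}}:\pi\text{ proper}\}$ is dense in $\mathcal{F}$ with respect to the form norm $\mathcal{E}_{1}$, or a direct Fourier construction of priors that reproduce $f_{n}$ modulo vanishing Dirichlet energy. This is the technical heart of the corollary and the step I expect to require the most care.
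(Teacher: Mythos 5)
Your treatment of the inadmissibility half ($d\geq 2$) is essentially the paper's argument: transience supplies a reference function $g$ bounding $\mathcal{E}(\sqrt{M^{\pi}},\sqrt{M^{\pi}})$ below by $\int M^{\pi}g\,d\mathsf{m}>0$ for every proper $\pi$, and Theorem~\ref{prop:BayesRisk} converts this into a non-vanishing Bayes risk difference. That part stands.

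The admissibility half ($d=1$) has a genuine gap, and it is exactly the one you flag at the end: the sequence $\{f_{n}\}$ produced by recurrence is not of the form $\sqrt{M^{\pi_{n}}}$, and neither a density argument nor surjectivity of $\pi\mapsto M^{\pi}$ is available. The paper does not close this gap the way you propose. Instead it proves Lemma~\ref{lem:Diri<prior}, a semigroup-contraction (spectral) inequality $\mathcal{E}(\sqrt{M^{\pi}},\sqrt{M^{\pi}})\leqq\mathcal{E}(\sqrt{\pi},\sqrt{\pi})$, which shifts the burden from the marginal to the prior: it suffices to exhibit proper priors with $\mathcal{E}(\sqrt{\pi_{n}},\sqrt{\pi_{n}})\to0$. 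Theorem~\ref{prop:Diri_0} then constructs these explicitly as $\sqrt{\pi_{n}}=G_{1/n}^{\eta}\eta$, the resolvent of the killed semigroup, using recurrence to show $G^{\eta}\eta=1$ and hence $\mathcal{E}(G_{1/n}^{\eta}\eta,G_{1/n}^{\eta}\eta)\leqq\langle\eta-\eta f_{n},f_{n}\rangle_{\mathsf{m}}\to0$. Your concrete fallback candidate also fails: the $d=1$ Cauchy process is the borderline recurrent case, analogous to $d=2$ Brownian motion. If you take Cauchy priors of scale $\sigma_{n}\to\infty$ and normalize them so that they do not vanish on compact sets (as Blyth's method requires), the marginal is a Cauchy of scale $c+\sigma_{n}$ carrying total mass of order $\sigma_{n}$, and a scaling computation of $\int|\widehat{\sqrt{M^{\pi_{n}}}}(z)|^{2}\,2c\|z\|\,dz$ gives $\sigma_{n}\cdot O(1/\sigma_{n})=O(1)$, bounded away from zero. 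So the naive spreading priors give a non-vanishing Dirichlet form, just as Gaussian priors fail for $d=2$ normal means and must be replaced by Stein-type (Green-function) priors; $G_{1/n}^{\eta}\eta$ is precisely the paper's generalization of that construction.
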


Further, for the 1-dimensional stable distribution, $\textrm{Stable}\left(\alpha,\gamma,c\right)$,
if the index, $\alpha$, is less than one, it is inadmissible.

\begin{cor} \label{cor:Stable} Consider a $1$-dimensional stable
distribution, $\emph{Stable}\left(\alpha,\gamma,c\right)$, with unknown
location, $\gamma$, and known scale, $c$. For the problem of estimating
the predictive distribution under KL risk, upon observing $X=x$, 
\begin{itemize}
\item When $\alpha=1$, $\emph{Stable}\left(\alpha,\gamma,c\right)$ is
a Cauchy distribution and thus the MLE, $\hat{\theta}=x$, plug-in
predictive distribution and the uniform prior Bayes predictive distribution
is admissible. 
\item When $0<\alpha<1$, the MLE, $\hat{\theta}=x$, plug-in predictive
distribution and the uniform prior Bayes predictive distribution is
inadmissible. 
\end{itemize}
\end{cor}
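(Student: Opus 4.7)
The plan is to deduce Corollary~\ref{cor:Stable} as a direct consequence of Theorem~\ref{prop:BayesRisk} combined with the classical transience classification of one-dimensional symmetric $\alpha$-stable Lévy processes. Since $\mathrm{Stable}(\alpha,\gamma,c)$ in its symmetric parameterization is a symmetric location--scale family in $\gamma$, the framework of Section~2.1 applies verbatim, and the Dirichlet-form recurrence/transience dichotomy recalled in Section~\ref{sec:2.2}, together with Blyth's method, identifies admissibility of $\hat{p}^{\pi_U}$ with recurrence of the Markov process whose transition kernel is $\hat{p}^{\pi_U}$, and inadmissibility with its transience.

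The case $\alpha=1$ is immediate: $\mathrm{Stable}(1,\gamma,c)$ is the one-dimensional Cauchy law, so the claim reduces to Corollary~\ref{cor:Cauchy} with $d=1$.

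For $0<\alpha<1$, I would first identify the continuous-time Markov semigroup associated with $\hat{p}^{\pi_U}$. Translation invariance gives $M^{\pi_U}(x;c)\equiv 1$, whence $\pi_U(\theta|x)=p_c(x|\theta)$ and
\[
\hat{p}^{\pi_U}(y|x)=\int p_c(y|\theta)\,p_c(x|\theta)\,d\theta=(f_c*f_c)(y-x),
\]
where $f_c$ denotes the symmetric $\alpha$-stable density with scale $c$. By the stability property, $f_c*f_c$ is the $\mathrm{Stable}(\alpha,0,2^{1/\alpha}c)$ density, so the underlying Markov process is, up to a deterministic time-rescaling, the symmetric $\alpha$-stable Lévy process on $\mathbb{R}$, whose Dirichlet form coincides, up to a positive constant, with the fractional energy
\[
\mathcal{E}(f,f)=C_\alpha\int_{\mathbb{R}}\int_{\mathbb{R}}\frac{(f(x)-f(y))^2}{|x-y|^{1+\alpha}}\,dx\,dy.
\]
By the classical classification \citep{Sato_99}, this Lévy process is transient whenever $d=1$ and $\alpha\in(0,1)$. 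The transience criterion from Section~\ref{sec:2.2} then furnishes a bounded, strictly positive, $\mathsf{m}$-integrable $g$ such that $0<\int_{\mathbb{R}}|f|g\,d\mathsf{m}\leq\mathcal{E}(\sqrt{f},\sqrt{f})$ for every $\sqrt{f}\in\mathcal{F}$. Combined with Theorem~\ref{prop:BayesRisk}, this rules out the existence of any sequence of proper priors $\{\pi_n\}$ simultaneously driving the Bayes risk difference in Eq.~(\ref{eq:BayesRiskDiffe}) to zero while having $M^{\pi_n}$ approach the constant marginal generated by $\pi_U$. Hence Blyth's method fails for $\hat{p}^{\pi_U}$, which is therefore inadmissible.

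The main obstacle is technical: verifying that $\sqrt{M^{\pi_n}}$ lies in the fractional Dirichlet domain $\mathcal{F}$ for a rich enough class of proper approximating priors $\{\pi_n\}$, and that the formal treatment of $\pi_U$ as a limit of such priors remains compatible with the heavy-tailed, non-integrable character of $\hat{p}^{\pi_U}$ when $\alpha<1$ (for which even first moments fail to exist). Once these regularity issues are dispatched, the recurrence/transience dichotomy for the symmetric $\alpha$-stable process in $\mathbb{R}^1$ transfers cleanly, via Theorem~\ref{prop:BayesRisk}, into the admissibility dichotomy asserted in Corollary~\ref{cor:Stable}.
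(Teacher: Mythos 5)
Your proposal is correct and follows essentially the same route as the paper: the $\alpha=1$ case is delegated to the $d=1$ Cauchy case, and for $0<\alpha<1$ you combine Theorem~\ref{prop:BayesRisk} with the transience of the one-dimensional symmetric $\alpha$-stable process (Example~\ref{transiStable}) and the reference-function characterization of transience to bound $\mathcal{E}\left(\sqrt{M^{\pi}},\sqrt{M^{\pi}}\right)$ below by a strictly positive quantity, exactly as in the paper (which takes $g=\sqrt{M^{\pi}}/\max\left\{ \int_{0}^{\infty}T_{s}\sqrt{M^{\pi}}\,ds,1\right\}$ and invokes Eq.~(\ref{eq:transiDiri})). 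The only cosmetic differences are your use of the jump-kernel form of the Dirichlet form in place of the paper's Fourier representation and a different scale-parameter convention ($2^{1/\alpha}c$ versus the paper's $2c$).
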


While \cite{Eaton_92} also considers the relationship between a Markov
process and the problem of estimating the mean parameter, our approach
is notably different in several ways. First, the loss function in
\cite{Eaton_92} is quadratic, which assumes the second moment of
the estimator. Since this is problematic for the Cauchy, we use the
KL loss and do not assume the existence of moments. Second, the goal
of \cite{Eaton_92} is to derive the sufficient condition for admissibility
by bounding the Bayes risk difference from above using the Dirichlet
form. We, on the other hand, derive the exact relation between the
Bayes risk difference and Dirichlet form to provide the necessary
and sufficient condition to discern admissibility and inadmissibility,
similar to \cite{Brown_71}.

\subsection{Sufficient conditions for admissibility\label{subsec:Sufficient-conditions}}

In this section, we will show that the $\mathbb{R}^{1}$-Cauchy distribution is admissible. Specifically, we show
the following: the group invariant prior is a Lebesgue measure for the
location parameter, but whether an estimator is inadmissible can be
known from Eq.~\eqref{eq:BRD=00003D00003D00003DDirichlet} when the
predictive distribution based on the Lebesgue prior is transient.
Conversely, when the predictive distribution is recurrent, there exists
a way to construct a prior sequence that approximates a uniform prior
to make the estimator admissible. The $\mathbb{R}^{1}$-Cauchy distribution
in Corollary~\ref{cor:Cauchy} and $\mathbb{R}^{1,2}$-normal distribution
are examples of this construction.

\begin{lemma} \label{lem:Diri<prior} Let $\left(\mathcal{E},\mathcal{F}\right)$
be the Dirichlet form that follows a Markov process with transition probability, $\hat{p}_{t}^{\pi_{U}}$.
Then, we have $\sqrt{M^{\pi}}\in\mathcal{F}$, and for $\mathcal{E}\left(\sqrt{M^{\pi}},\sqrt{M^{\pi}}\right)$
we have the following inequality, 
\[
\mathcal{E}\left(\sqrt{M^{\pi}},\sqrt{M^{\pi}}\right)\leqq\mathcal{E}\left(\sqrt{\pi},\sqrt{\pi}\right).
\]

\end{lemma}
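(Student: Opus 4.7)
My plan is to exploit two facts: the translation-invariance of $\mathcal{E}$ (inherited from the convolutional form $\hat{p}^{\pi_{U}}(y|x)=(q\ast q)(y-x)$ derived in Section~\ref{sec:2.2}, where $q(\cdot)=p_{c}(\cdot|0)$), and the observation that the marginal map $\pi\mapsto M^{\pi}=q\ast\pi$ is itself a Markov kernel (convolution with the probability density $q$). Translation-invariance lets me write the Dirichlet form in Beurling--Deny form
\[
\mathcal{E}(f,f)=\tfrac{1}{2}\int_{\mathbb{R}^{d}}\!\!\int_{\mathbb{R}^{d}}\bigl(f(x+z)-f(x)\bigr)^{2}\,dx\,\nu(dz)
\]
(plus, in principle, a Gaussian part that is absent in the Cauchy and stable cases targeted by Corollaries~\ref{cor:Cauchy}--\ref{cor:Stable}), so the desired inequality reduces to a \emph{pointwise in the jump size} $z$ estimate.

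The pointwise step I would prove is
\[
\int_{\mathbb{R}^{d}}\bigl(\sqrt{M^{\pi}(x+z)}-\sqrt{M^{\pi}(x)}\bigr)^{2}dx\;\leq\;\int_{\mathbb{R}^{d}}\bigl(\sqrt{\pi(x+z)}-\sqrt{\pi(x)}\bigr)^{2}dx,
\]
i.e., the squared Hellinger distance between $\pi$ and its translate $\pi_{z}(\cdot)=\pi(\cdot+z)$ shrinks after convolution with $q$. This is the classical Hellinger data-processing inequality applied to the Markov kernel $K(y|\theta)=q(y-\theta)$, together with the crucial translation-equivariance $q\ast\pi_{z}=(q\ast\pi)_{z}=M^{\pi}_{z}$. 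Its proof is a single Cauchy--Schwarz: for each $x$,
\[
M^{\pi}(x)\,M^{\pi}_{z}(x)\;\geq\;\Bigl(\int q(x-\theta)\sqrt{\pi(\theta)\pi_{z}(\theta)}\,d\theta\Bigr)^{2},
\]
so that taking square roots, integrating in $x$ via Fubini, and using $\int q=1$ yields the Bhattacharyya contraction $\int\sqrt{M^{\pi}M^{\pi}_{z}}\,dx\geq\int\sqrt{\pi\pi_{z}}\,d\theta$. Combined with the mass-preservation $\int M^{\pi}=\int M^{\pi}_{z}=\int\pi=\int\pi_{z}$, expanding $(\sqrt{a}-\sqrt{b})^{2}=a+b-2\sqrt{ab}$ delivers the displayed pointwise inequality.

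Integrating that bound against $\nu(dz)$ gives $\mathcal{E}(\sqrt{M^{\pi}},\sqrt{M^{\pi}})\leq\mathcal{E}(\sqrt{\pi},\sqrt{\pi})$; a Blachman--Stam-type estimate $\int|\nabla\sqrt{M^{\pi}}|^{2}dx\leq\int|\nabla\sqrt{\pi}|^{2}dx$ (the same Cauchy--Schwarz, now differentiated under the convolution) handles any Gaussian part. The domain claim $\sqrt{M^{\pi}}\in\mathcal{F}$ is then immediate whenever $\sqrt{\pi}\in\mathcal{F}$, and otherwise the inequality is vacuous; finiteness of $\mathcal{E}(\sqrt{M^{\pi}},\sqrt{M^{\pi}})$ can in that case be obtained directly from the regularity of $q$ via the same estimate with $\pi_{n}\to\pi$ a truncation. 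The main obstacle I foresee is not the pointwise Hellinger contraction itself but producing the Beurling--Deny jump-measure representation with an explicit $\nu$ for the convolution semigroup generated by a general symmetric infinitely divisible $q$, and verifying the associated regular Dirichlet space is the one of interest; for the Cauchy and stable targets this is textbook Lévy theory.
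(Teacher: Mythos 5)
Your route is correct in substance but genuinely different from the paper's. The paper argues spectrally: writing $M^{\pi}=T_{t}\pi$ for the convolution semigroup $\left\{ T_{t}\right\} $ and integrating the elementary bound $\lambda e^{-t\lambda}\leqq\frac{1}{t}\left(1-e^{-t\lambda}\right)\leqq\lambda$ against the spectral measure $d\left\langle E_{\lambda}\sqrt{\pi},\sqrt{\pi}\right\rangle $ of the generator, it obtains the chain $\mathcal{E}\left(\sqrt{M^{\pi}},\sqrt{M^{\pi}}\right)\leqq\frac{1}{t}\left\{ \left\langle \sqrt{\pi},\sqrt{\pi}\right\rangle _{\mathsf{m}}-\left\langle \sqrt{M^{\pi}},\sqrt{M^{\pi}}\right\rangle _{\mathsf{m}}\right\} \leqq\mathcal{E}\left(\sqrt{\pi},\sqrt{\pi}\right)$. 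That argument needs no translation invariance and, because $\sup_{\lambda}\lambda e^{-t\lambda}<\infty$, it yields $\sqrt{M^{\pi}}\in\mathcal{F}$ without assuming $\sqrt{\pi}\in\mathcal{F}$; on the other hand it silently treats $\sqrt{T_{t}\pi}$ as though it were the semigroup applied to $\sqrt{\pi}$, i.e., it elides exactly the nonlinearity of the square root. Your proof attacks that nonlinearity head-on: the Beurling--Deny jump representation (available here because the paper computes $\mathcal{E}\left(f,f\right)=\int\left|\hat{f}\right|^{2}\psi\,dz$ with $\psi$ a symmetric L\'evy exponent, so Plancherel converts it into your double integral against $\nu$) reduces the claim to the Hellinger data-processing inequality for the kernel $q\left(\cdot-\theta\right)$ combined with the equivariance $q\ast\pi_{z}=\left(q\ast\pi\right)_{z}$, and the single Cauchy--Schwarz step you give is correct, as is the passage $z\rightarrow0$ for the diffusion part. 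What you trade away is generality --- your argument is tied to convolution semigroups, while the spectral one covers any symmetric Markov semigroup --- and the unconditional domain claim: your inequality places $\sqrt{M^{\pi}}$ in $\mathcal{F}$ only when $\sqrt{\pi}\in\mathcal{F}$, so recovering the lemma as stated still requires the smoothing step you defer to the regularity of $q$. Both gaps are harmless for the paper's use of the lemma, since the proper priors actually inserted are $\pi_{n}=\left(G_{1/n}^{\eta}\eta\right)^{2}$ with $\sqrt{\pi_{n}}\in\mathcal{F}$ by construction, and the Cauchy and stable targets come with explicit L\'evy exponents and no Gaussian part.
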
 \begin{proof} See Supplementary Material Appendix~\ref{app:Diri<prior}.
\end{proof}

This lemma provides a guide to constructing a prior distribution that
makes the estimator admissible. From this Lemma, to show admissibility,
we need to show that a sequence of proper priors, $\left\{ \pi_{n}\right\} $,
such that $\mathcal{E}\left(\sqrt{\pi_{n}},\sqrt{\pi_{n}}\right)\rightarrow0$,
can be constructed. However, $\left\{ \pi_{n}\right\} $ is a functional
sequence in the $L^{2}\left(\mathbb{R}^{d},\mathsf{m}\right)$ space.
Therefore, it is a functional sequence of a proper prior, and we construct
a functional sequence such that it is a uniform prior when $n\rightarrow\infty$.
When $\left(\mathcal{E},\mathcal{F}\right)$ is recurrent, the functional
sequence can be constructed in $\mathcal{F}$, but cannot when it
is transient. This is the distinction between admissibility/inadmissibility
for the $\mathbb{R}^{1},\mathbb{R}^{d}\left(d\geqq2\right)$ Cauchy
distribution (\ref{cor:Cauchy}), and $\mathbb{R}^{1,2},\mathbb{R}^{d}\left(d\geqq3\right)$
normal distribution.

We now construct such a sequence. Assume that the Dirichlet form,
$\left(\mathcal{E},\mathcal{F}\right)$, of the Markovian semigroup,
$\left\{ T_{t}\right\} $, that corresponds to the transition probability,
$\hat{p}^{\pi_{U}}$, and $\hat{p}^{\pi_{U}}$ is recurrent. We transform
the Markovian semigroup with transition probability, $\hat{p}^{\pi_{U}}$,
as $\left\{ T_{t}^{\eta}\right\} $. Choose a function, $\eta$, that
is $\eta\in L^{1}\left(\mathbb{R}^{d};\mathsf{m}\right)\cap L^{\infty}\left(\mathbb{R}^{d};\mathsf{m}\right),\ \eta>0\ \mathsf{m}\textrm{-a.e.}$,
and define it as 
\[
T_{t}^{\eta}f\left(x\right)\triangleq\int_{\mathbb{R}^{d}}\exp\left(-t\eta\left(y\right)\right)\hat{p}_{t}^{\pi_{U}}\left(y\left|x\right.\right)f\left(y\right)dy,\ f\in L^{2}\left(\mathbb{R}^{d};\mathsf{m}\right).
\]
The corresponding Dirichlet form is 
\[
\mathcal{E}^{\eta}\left(f,g\right)=\mathcal{E}\left(f,g\right)+\left\langle f,g\right\rangle _{\eta\cdot\mathsf{m}},\ f,g\in\mathcal{F}.
\]
Although, 
\[
\left\langle f,g\right\rangle _{\eta\cdot\mathsf{m}}=\int_{\mathbb{R}^{d}}f\left(x\right)g\left(x\right)\eta\left(x\right)\mathsf{m}\left(dx\right).
\]
Here, $\left\{ T_{t}^{\eta}\right\} $ is a Markov process semigroup
that is generated by a particle following a Markov process corresponding
to $\left\{ T_{t}\right\} $ that has a survival probability $\int_{0}^{t}\exp\left(-s\eta\left(y\right)\right)ds$,
at $t$.

Denote the transition semigroup of this transformed transition probability
as $\left\{ T_{t}^{\eta}\right\} $, resolvent as $G_{\alpha}^{\eta}$,
and consider the function, $G_{\alpha}^{\eta}\eta$. Therefore, 
\begin{alignat*}{1}
G_{\alpha}^{\eta}\eta\left(x\right) & =\int_{0}^{\infty}e^{-\alpha s}T_{s}\eta\left(x\right)ds.\\
 & =\int_{0}^{\infty}e^{-\alpha s}\int_{\mathbb{R}^{d}}\exp\left(-s\eta\left(y\right)\right)\hat{p}_{s}^{\pi_{U}}\left(y\left|x\right.\right)\eta\left(y\right)\mathsf{m}\left(dy\right)ds.
\end{alignat*}
If we consider $G_{\frac{1}{n}}^{\eta}\eta$ as $\sqrt{\pi_{n}}$
this is the desired sequence of proper priors. In other words, we
have the following theorem: \begin{theorem} \label{prop:Diri_0}Assume
that the Dirichlet form, $\left(\mathcal{E},\mathcal{F}\right)$,
is recurrent. $G_{\frac{1}{n}}^{\eta}\eta$ is a function defined
on $\mathbb{R}^{d}$. Then, $G_{\frac{1}{n}}^{\eta}\eta\in\mathcal{F}\subset L^{2}\left(\mathbb{R}^{d};\mathsf{m}\right)$,
and is square-integrable, proper, $0\leqq G_{\frac{1}{n}}^{\eta}\eta\left(x\right)\uparrow1,\ \mathsf{m}\textrm{-a.e.}$,
and asymptotically uniform, non-negative. From this, we have, 
\[
\lim_{n\rightarrow\infty}\mathcal{E}\left(G_{\frac{1}{n}}^{\eta}\eta,G_{\frac{1}{n}}^{\eta}\eta\right)=0.
\]

\end{theorem}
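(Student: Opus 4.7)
Write $u_n := G^{\eta}_{1/n}\eta$ and $\alpha = 1/n$. The backbone of the whole argument is the pointwise identity
\[
G^{\eta}_{\alpha}\eta(x) + \alpha\,G^{\eta}_{\alpha} 1(x) = 1,
\]
which I would verify by integration by parts in $s$: since $\tfrac{d}{ds}T^{\eta}_{s}1(x) = -T^{\eta}_{s}\eta(x)$ (differentiate the Feynman-Kac representation $T^{\eta}_{s}1(x)=\mathbb{E}_{x}[\exp(-\int_0^s \eta(X_r)\,dr)]$), we get $G^{\eta}_{\alpha}\eta(x) = -\int_0^\infty e^{-\alpha s}\tfrac{d}{ds}T^{\eta}_{s}1(x)\,ds = 1-\alpha G^{\eta}_{\alpha}1(x)$. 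From $u_n = 1 - \alpha G^{\eta}_{\alpha}1$ the bounds $0\le u_n\le 1$ are immediate (since $T^{\eta}_{s}1\le 1$), $u_n\in\mathcal{F}$ follows from the standard mapping property of the perturbed resolvent $G^{\eta}_{\alpha}:L^{2}(\mathbb{R}^d;\mathsf{m})\to\mathcal{F}^{\eta}=\mathcal{F}$ applied to $\eta\in L^{1}\cap L^{\infty}\subset L^{2}$, and monotonicity $u_n\uparrow$ in $n$ follows from the resolvent equation $G^{\eta}_{\alpha}-G^{\eta}_{\beta}=(\beta-\alpha)G^{\eta}_{\alpha}G^{\eta}_{\beta}$ applied to $\eta\ge 0$, which gives $G^{\eta}_{\alpha}\eta\ge G^{\eta}_{\beta}\eta$ whenever $\alpha<\beta$.

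\textbf{Pointwise limit $1$.} The place where recurrence is actually used is to prove $\alpha G^{\eta}_{\alpha}1\downarrow 0$ pointwise $\mathsf{m}$-a.e. Because $(\mathcal{E},\mathcal{F})$ is recurrent and $\eta>0$ $\mathsf{m}$-a.e., the associated process spends infinite time in every level set $\{\eta\ge\varepsilon\}$ of positive $\mathsf{m}$-measure (the hitting/occupation characterisation of recurrence in \citet{Fukushima-Oshima-Takeda_10}), so $\int_0^\infty \eta(X_s)\,ds=+\infty$ $\mathbb{P}_{x}$-a.s. and hence $T^{\eta}_{t}1(x)=\mathbb{E}_{x}[\exp(-\int_0^t\eta(X_s)\,ds)]\downarrow 0$ pointwise. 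A change of variables $u=\alpha s$ gives
\[
\alpha G^{\eta}_{\alpha}1(x)=\int_{0}^{\infty}e^{-u}T^{\eta}_{u/\alpha}1(x)\,du,
\]
and dominated convergence (majorant $e^{-u}$) yields the claim; consequently $u_n\uparrow 1$ a.e.

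\textbf{Dirichlet form decay.} Plugging $f=\eta$ and the test function $v=u_n$ into the defining resolvent identity $\mathcal{E}^{\eta}(G^{\eta}_{\alpha}f,v)+\alpha\langle G^{\eta}_{\alpha}f,v\rangle_{\mathsf{m}}=\langle f,v\rangle_{\mathsf{m}}$ and expanding $\mathcal{E}^{\eta}=\mathcal{E}+\langle\cdot,\cdot\rangle_{\eta\cdot\mathsf{m}}$ produces the energy balance
\[
\mathcal{E}(u_n,u_n)+\alpha\|u_n\|_{L^{2}(\mathsf{m})}^{2}=\int_{\mathbb{R}^{d}}\eta\,u_n(1-u_n)\,d\mathsf{m}.
\]
Both terms on the left are non-negative, so it suffices to see the right-hand side tend to $0$: the integrand is bounded by $\eta/4\in L^{1}(\mathsf{m})$ and, by Step~2, converges to $0$ a.e., so dominated convergence finishes the job. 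One gets both $\mathcal{E}(u_n,u_n)\to 0$ and $\alpha\|u_n\|_{L^{2}(\mathsf{m})}^{2}\to 0$ as a bonus.

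\textbf{Main obstacle.} The only non-bookkeeping step is the derivation of $\int_0^\infty\eta(X_s)\,ds=\infty$ a.s.\ from the recurrence of $(\mathcal{E},\mathcal{F})$ together with $\eta>0$ $\mathsf{m}$-a.e. This is intuitive but requires either a Chacon-Ornstein type occupation argument, or an invocation of the analytic dichotomy in \citet{Fukushima-Oshima-Takeda_10} characterising recurrence through the infiniteness of nontrivial non-negative potentials; everything else is standard Feynman-Kac resolvent calculus.
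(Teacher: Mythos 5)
Your proof is correct in outline and shares the paper's overall skeleton --- establish $0\leqq G^{\eta}_{1/n}\eta\uparrow 1$ $\mathsf{m}$-a.e.\ using recurrence, then kill the energy via the resolvent identity $\mathcal{E}(u_n,u_n)+\tfrac{1}{n}\|u_n\|^2_{\mathsf{m}}=\int\eta\,u_n(1-u_n)\,d\mathsf{m}$ and dominated convergence (your energy-balance step is in substance identical to the paper's bound $\mathcal{E}(f_n,f_n)\leqq\mathcal{E}_{1/n}(f_n,f_n)=\langle\eta-\eta f_n,f_n\rangle_{\mathsf{m}}\leqq\int(\eta-\eta f_n)\,d\mathsf{m}\rightarrow 0$). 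Where you genuinely diverge is the middle step. The paper stays entirely analytic: it uses the perturbation identity $G^{\eta}_{\alpha}f=G_{\alpha}(f-\eta G^{\eta}_{\alpha}f)$, lets $\alpha\downarrow 0$ to get $G\eta\,(1-G^{\eta}\eta)=G^{\eta}\eta\leqq 1$, and then invokes the recurrence dichotomy for the potential operator ($Gg=0$ or $\infty$ $\mathsf{m}$-a.e.\ for $g\in L^{1}_{+}$) together with $\eta>0$ $\mathsf{m}$-a.e.\ to rule out $G\eta<\infty$ on a set of positive measure, forcing $G^{\eta}\eta=1$. You instead use the complementary Feynman--Kac identity $G^{\eta}_{\alpha}\eta+\alpha G^{\eta}_{\alpha}1=1$ and reduce the problem to the pathwise statement $\int_{0}^{\infty}\eta(X_s)\,ds=\infty$ $\mathbb{P}_{x}$-a.s. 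That statement is true for recurrent symmetric processes, but --- as you honestly flag --- it needs a zero--one law or Chacon--Ornstein argument on top of the mere divergence of the expectation $G\eta(x)=\mathbb{E}_{x}[\int_{0}^{\infty}\eta(X_s)\,ds]$, and this is exactly the point where the paper's analytic route is more economical: it only ever needs the $\mathsf{m}$-a.e.\ statement $G\eta=\infty$, never an almost-sure path statement. If you want to close your gap without importing the probabilistic machinery, substitute the paper's argument at that one step; everything else in your write-up (the mapping property $G^{\eta}_{\alpha}:L^{2}\to\mathcal{F}$, the monotonicity from the resolvent equation, the bound $\eta u_n(1-u_n)\leqq\eta/4$) is fine. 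One further remark in your favor: the paper's displayed formula for $T^{\eta}_{t}$ weights by $\exp(-t\eta(y))$ at the endpoint, which is not actually a semigroup; the Feynman--Kac representation you use is the one consistent with the stated perturbed form $\mathcal{E}^{\eta}=\mathcal{E}+\langle\cdot,\cdot\rangle_{\eta\cdot\mathsf{m}}$ in \cite{Fukushima-Oshima-Takeda_10}.
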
 \begin{proof} See Supplementary Material Appendix~\ref{app:Diri_0}.
\end{proof} Thus, Corollary~\ref{cor:Cauchy} is proven.

To show the admissibility of the Cauchy distribution when $d=1$,
we need to show the existence of a sequence of $\hat{p}^{\pi}$ that
converges the Bayes risk difference to zero. This is equivalent to
the existence of a sequence, $M^{\pi}$, that converges the Dirichlet
form to zero. From, Lemma~\ref{lem:Diri<prior}, we only need to
consider the prior sequence. Since the Dirichlet form of the Cauchy
process is recurrent when $d=1$ (from Example~\ref{transiCauchy}),
if we input the functional sequence of a proper prior, $\left\{ G_{\frac{1}{n}}^{\eta}\eta\right\} $,
from Theorem~\ref{prop:Diri_0}, we can show that the Bayes risk
difference converges to zero. This is because the prior sequence
is a measure sequence that approximates the Lebesgue measure by the
$L^{2}\left(\mathbb{R}^{d};\mathsf{m}\right)$ function. Therefore,
for a Cauchy distribution with $d=1$, the uniform Bayes predictive
distribution, $\hat{p}^{\pi_{U}}$, is admissible from Blyth's method
\citep[][Lemma 1.]{Brown-EdGeorge-Xu_08}.

Under a Gaussian distribution, $G_{\frac{1}{n}}^{\eta}\eta$ is equivalent
to the Stein prior, which is a Green function. We can consider $G_{\frac{1}{n}}^{\eta}\eta$
to be an extension of the Stein prior to infinitely divisible distributions.
However, as noted in Section~\ref{sec:2.2}, we are not evaluating
the risk difference (not the Bayes risk), so for the statement \textquotedblright under
a Gaussian distribution, the Bayes predictive distribution with a
Stein prior, which is a Green function, dominates a uniform prior
distribution and is also admissible," we are only extending the \textquotedblright admissible"
part. The rest is for future research.

\section{\label{subsec:Markov-Bayes}Bayesian predictive distributions as
Markov transition probabilities}

In this section, we correspond the predictive distribution, $\hat{p}^{\pi_{U}}\left(y\left|x\right.\right)$,
and the continuous-time Markov process, $\left\{ X_{t}\right\} _{t\geqq0}$.
Thus, we will derive the continuous-time Markov process from the predictive
distribution, $\hat{p}^{\pi_{U}}\left(y\left|x\right.\right)$, and
connect the known results from Markov processes to the statistical
decision problem. Specifically, we define the Dirichlet form that
corresponds to the predictive distribution, $\hat{p}^{\pi_{U}}\left(y\left|x\right.\right)$,
and the concept of transience/recurrence of Markov processes. For
a more detailed explanation regarding Markov processes, see \cite{Fukushima-Oshima-Takeda_10}.

Let the transition probability of a continuous-time Markov process,
$\left\{ X_{t}\right\} _{t\geqq0}$, be $p_{t}\left(\left.y\right|x\right)$.
This can be interpreted as the density function of the probability
that a Markov process, $\left\{ X_{t}\right\} $, with initial value,
$X_{0}=x$, takes the value, $X_{t}=y$, at time, $t$.

Let us reinterpret the likelihood function, $p_{c}\left(x\left|\theta\right.\right)$,
of the symmetric $d$-dimensional Location-Scale family, $p_{c}\left(x\left|\theta\right.\right)$,
as the transition density function of a stochastic process, $\left\{ X_{t}\right\} _{t\geqq0}$,
with initial value, $X_{0}=\theta$, and takes value, $X_{c}=x$,
at time, $c$, and the random variable, $X\sim p_{c}\left(x\left|\theta\right.\right)$,
as a stochastic process that takes value, $X_{c}$, at time, $t=c$.
From symmetry, we have $p_{c}\left(x\left|\theta\right.\right)=p_{c}\left(\theta\left|x\right.\right)$,
thus $p_{c}\left(\theta\left|x\right.\right)$ can be interpreted
as a transition density function of a stochastic process with initial
value, $X_{0}=x$, and takes value, $X_{c}=\theta$, at time, $c$.
Then, from the symmetry of the distribution, $p_{c}\left(x\left|\theta\right.\right)$,
the predictive distribution, $\hat{p}^{\pi_{U}}\left(y\left|x\right.\right)$,
can be interpreted as a convolution distribution of two random variables,
$Y\sim p_{c}\left(y\left|\theta\right.\right)$, $\theta\sim p_{c}\left(x\left|\theta\right.\right)=p_{c}\left(\theta\left|x\right.\right)$:
\[
\hat{p}^{\pi_{U}}\left(y\left|x\right.\right)=p_{2c}\left(\left.y\right|x\right)=\int_{\mathbb{R}^{d}}p_{c}\left(y\left|\theta\right.\right)p_{c}\left(\theta\left|x\right.\right)\mathsf{m}\left(d\theta\right).
\]
Here, $\mathsf{m}\left(d\theta\right)$ is a Lebesgue measure in $\mathbb{R}^{d}$.
$\hat{p}^{\pi_{U}}\left(y\left|x\right.\right)$ can be seen as the
transition density function of the stochastic process, $\left\{ X_{t}\right\} _{t\geqq0}$,
with initial value, $X_{0}=x$, that takes value, $X_{2c}=y$, at
time, $2c$.

In corresponding the transition density function, $p_{t}\left(\left.y\right|x\right)$,
of the continuous-time stochastic process, $\left\{ X_{t}\right\} _{t\geqq0}$,
to the predictive distribution, $\hat{p}^{\pi_{U}}\left(y\left|x\right.\right)$,
we assumed that the transition density function, $p_{2c}\left(\left.y\right|x\right)$,
at time, $t=2c$, is equal to the predictive distribution, $\hat{p}^{\pi_{U}}\left(y\left|x\right.\right)$.
The transition density function, $p_{t}\left(\left.y\right|x\right)$,
can take any positive real value including $t=\left(0,c,2c\right)$,
at time, $t$. However, we assume that the scale parameter
of the symmetric $d$-dimensional Location-Scale family is equal,
thus the transition density function, $p_{t}\left(\left.y\right|x\right)$,
does not depend on $t$. For example, if we set $t=0.31415$, $p_{t}\left(\left.y\right|x\right)$
is a symmetric $d$-dimensional Location-Scale family with location,
$\theta=x$, and scale parameter, $c=0.31415$. Therefore, the uniform
Bayes predictive distribution, $\hat{p}^{\pi_{U}}\left(y\left|x\right.\right)$,
is the transition probability of the convolution semigroup, and is
to simply consider the transition probability, $p_{t}$, to be the
predictive distribution with scale parameter, $c$ and time, $t$.
Thus, constructing a continuous-time transition probability function
from the uniform Bayes predictive distribution is straightforward.

Next, using the transition probability function, $p_{t}\left(\left.y\right|x\right)$,
we define the notion of transience and recurrence of a Markov process,
Define the potential operator as 
\begin{align*}
Rf\left(x\right) & =\int_{0}^{\infty}\left\{ \int_{\mathbb{R}^{d}}p_{s}\left(\left.y\right|x\right)f\left(y\right)\mathsf{m}\left(dy\right)\right\} ds.
\end{align*}
Intuitively, $\int_{\mathbb{R}^{d}}p_{s}\left(\left.y\right|x\right)1_{B}\left(y\right)\mathsf{m}\left(dy\right)$
is the probability that a stochastic process, with initial value,
$x$, arrives at $B$ after time, $s$. To integrate this probability
over time, $\left[0,\infty\right)$, means to obtain the number of
times the stochastic process arrives at $B$ after a long time. Then,
we have the following definition: \begin{definition} \textit{The
transition probability function, $\left\{ p_{t}\right\} _{t\geq0}$,
is transient if there exists positive integrable function, $g\in L_{+}^{1}\left(\mathbb{R}^{d};\mathsf{m}\right)$,
satisfying $\mathsf{m}\left(x\left|g\left(x\right)=0\right.\right)=0$,
such that 
\[
Rg<\infty,\ \mathsf{m}\textrm{-a.e.}
\]
The transition probability function, $\left\{ p_{t}\right\} _{t\geq0}$,
is recurrent if there exists positive integrable function, $g\in L_{+}^{1}\left(\mathbb{R}^{d};\mathsf{m}\right)$,
such that 
\begin{alignat*}{1}
Rg & =\infty\,\textrm{or}\ 0,\ \mathsf{m}\textrm{-a.e.}
\end{alignat*}
} \end{definition} Therefore, whether the integral value of the transition
probability with regard to time is finite or infinite determines if
it is transient or recurrent. As we will see later, whether the integral
value of the transition value with regard to time, $Rg$, is finite
or infinite determines whether Eq.~(\ref{eq:BayesRiskDiffe}) is
non-zero or zero.

Lastly, we will briefly explain the Dirichlet form. For a more rigorous
explanation, see \cite{Fukushima-Oshima-Takeda_10}. Let $L^{2}\left(\mathbb{R}^{d};\mathsf{m}\right)$
be the entire square-integrable function regarding the Lebesgue measure,
$\mathsf{m}$, on $\mathbb{R}^{d}$.

First, there exists a measure, $m\left(\cdot\right)$, that is an
invariant measure 
\[
m\left(dy\right)=\int_{\mathbb{R}^{d}}p_{t}\left(\left.y\right|x\right)m\left(dx\right),\ \left(t>0\right)
\]
with regard to the transition probability, $p_{t}\left(\left.y\right|x\right)$.
The Lebesgue measure $\mathsf{m}$ is always an invariant measure,
though there exists $p_{t}\left(\left.y\right|x\right)$, such that
the invariant measure is a finite measure, as Eq.~(\ref{eq:Marginal}).
However, as we will see later, for the uniform prior predictive distribution,
$\hat{p}^{\pi_{U}}\left(y\left|x\right.\right)$, that we consider,
the invariant distribution of the transition probability, $p_{t}\left(\left.y\right|x\right)$,
is only a Lebesgue measure.

Next, if we differential the transition probability, $p_{t}\left(\left.y\right|x\right)$
regarding time, $t$, we obtain the parabolic partial differential
equation: 
\[
\frac{d}{dt}p_{t}\left(\left.y\right|x\right)=\mathcal{A}\cdot p_{t}\left(\left.y\right|x\right)
\]
regarding the operator, $\mathcal{A}$, that corresponds to $p_{t}$.
The operator, $\mathcal{A}$, differs according to the transition
probability, $p_{t}\left(\left.y\right|x\right)$. Note that $\mathcal{A}\cdot p_{t}\left(\left.y\right|x\right)$
operates on the variable, $x$. The function space that is the domain
for the operator, $\mathcal{A}$, is the linear subspace, $\mathcal{F}\subset L^{2}\left(\mathbb{R}^{d};\mathsf{m}\right)$,
of $L^{2}\left(\mathbb{R}^{d};\mathsf{m}\right)$, and represents
all functions, $f$, such that $\mathcal{A}f\in L^{2}\left(\mathbb{R}^{d};\mathsf{m}\right)$.
The size of this function space, $\mathcal{F}$, depends on $\mathcal{A}$.
This invariant measure, $\mathsf{m}$, and the domain, $\mathcal{F}$,
determined by the operator, $\mathcal{A}$, is what determines the
transience/recurrence of a Markov process with transition probability,
$p_{t}\left(\left.y\right|x\right)$, and the admissibility/inadmissibility
of the predictive distribution, $\hat{p}^{\pi_{U}}\left(y\left|x\right.\right)$.

Similar to considering the eigenvalue problem for matrices, we consider
the following quadratic form: 
\[
\left\langle -\mathcal{A}f,g\right\rangle _{\mathsf{m}}=\int_{\mathbb{R}^{d}}\left(-\mathcal{A}f\left(x\right)\right)g\left(x\right)\mathsf{m}\left(dx\right),\ f,g\in\mathcal{F}.
\]
Regarding the operator, $\mathcal{A}$, of the transition probability,
$p_{t}\left(\left.y\right|x\right)$, obtained from convolution, the
quadratic form is symmetric: $\left\langle -\mathcal{A}f,g\right\rangle _{\mathsf{m}}=\left\langle f,-\mathcal{A}g\right\rangle _{\mathsf{m}}$.
This symmetric quadratic form is denoted as $\mathcal{E}\left(f,g\right)$,
and, with the domain, $\mathcal{F}$, of the operator, $\mathcal{A}$,
we have the Dirichlet form, $\left(\mathcal{E},\mathcal{F}\right)$.

Using $\left(\mathcal{E},\mathcal{F}\right)$, we can extract from
the transition probability, $p_{t}\left(\left.y\right|x\right)$,
the analytic information regarding $\mathsf{m},\mathcal{A},\mathcal{F}$,
and the related information on the Markov process \citep{Fukushima-Oshima-Takeda_10}.
In this paper, we directly connect the Bayes risk difference in Theorem~\ref{prop:BayesRisk}
and $\left(\mathcal{E},\mathcal{F}\right)$. By doing this, we can
apply known properties of Dirichlet form and Markov processes to the
statistical decision problem.

\begin{example} \textit{(Normal distribution and Brownian motion).
Let $X,Y$ be the $d$-dimensional normal random variable with mean,
$\theta$, and covariance, $A$: 
\[
X\sim\mathcal{N}\left(\theta,A\right)=p_{A}\left(x\left|\theta\right.\right),\ Y\sim\mathcal{N}\left(\theta,A\right)=p_{A}\left(y\left|\theta\right.\right).
\]
When observing a datum, $X=x$, the MLE is $\hat{\theta}=x$, and
the Bayes predictive distribution under the uniform prior, $\pi_{U}\left(\theta\right)=1$,
is 
\[
\hat{p}^{\pi_{U}}\left(y\left|x\right.\right)=\frac{1}{\sqrt{\left(2\pi\right)^{d}}}\frac{1}{\sqrt{\det\left(2A\right)}}\exp\left(-\frac{1}{2}\left\langle y-x,\left(2A\right)^{-1}\left(y-x\right)\right\rangle \right),
\]
and is a normal distribution with location $x$: $\mathcal{N}\left(x,2A\right)$.
However, the scale is $2A$ and is not the same as the plug-in predictive
distribution. Setting the corresponding transition probability of
the Markov process, $p_{t}\left(\left.y\right|x\right)$, as 
\[
p_{t}\left(y\left|x\right.\right)=\mathcal{N}\left(x,tA\right),
\]
this is the transition probability function of a Brownian motion with
initial value, $x$, at time, $t=0$. When $t=2$, this is equivalent
to the Bayes predictive distribution. }

\textit{A $d$-Brownian motion is recurrent when $d=1,2$, and is
transient otherwise. In fact, let the transition function $\left\{ p_{t}\left(\left.x\right|0\right);t>0\right\} $
be 
\begin{alignat*}{1}
p_{t}\left(\left.x\right|0\right) & =\frac{1}{\left(2\pi t\right)^{d/2}}\exp\left(-\frac{\left\Vert x\right\Vert ^{2}}{2t}\right)=\prod_{i=1}^{d}\frac{1}{\sqrt{2\pi t}}\exp\left(-\frac{x_{i}^{2}}{2t}\right).
\end{alignat*}
Integrating $p_{t}\left(\left.x\right|0\right)$ with respect to $t$
provides 
\begin{alignat*}{1}
\int_{0}^{\infty}p_{t}\left(\left.x\right|0\right)dt & =\begin{cases}
\infty, & d=1,2,\\
\frac{1}{2}\frac{1}{\pi^{d/2}}\Gamma\left(\frac{d}{2}-1\right)\left\Vert x\right\Vert ^{2-d}, & d\geq3.
\end{cases}
\end{alignat*}
}

\textit{Since $\int_{\mathbb{R}^{d}}p_{t}\left(y\left|x\right.\right)\mathsf{m}\left(dx\right)=1$,
the Lebesgue measure is an invariant measure. If we differentiate,
$p_{t}$, with regard to, $t$, we have 
\[
\frac{d}{dt}p_{t}\left(y\left|x\right.\right)=\frac{1}{2}\sum_{p,q=1}^{d}a_{pq}\frac{\partial}{\partial x_{p}}\frac{\partial}{\partial x_{q}}p_{t}\left(y\left|x\right.\right).
\]
Here, we have $A=\left(a_{pq}\right)$ (matrix $A$, where the $p,q$
elements are $a_{pq}$), and since $A$ is a symmetric matrix, we
have $a_{pq}=a_{qp}$. From this, the quadratic form is symmetric,
\[
\left\langle -\frac{1}{2}\sum_{p,q=1}^{d}a_{pq}\frac{\partial}{\partial x_{p}}\frac{\partial}{\partial x_{q}}f,g\right\rangle _{\mathsf{m}}=\left\langle f,-\frac{1}{2}\sum_{p,q=1}^{d}a_{pq}\frac{\partial}{\partial x_{p}}\frac{\partial}{\partial x_{q}}g\right\rangle _{\mathsf{m}}.
\]
Next, regarding the domain, $\mathcal{F}$, of the operator, $\frac{1}{2}\sum_{p,q=1}^{d}a_{pq}\frac{\partial}{\partial x_{p}}\frac{\partial}{\partial x_{q}}$,
it is immediate that $C_{0}^{\infty}\left(\mathbb{R}^{d}\right)$
(an infinitely continuous differentiable, compact support, continuous
function space) satisfies the conditions. However, it also seems that
a larger function space, e.g., a Sobolev space, $H^{1}\left(\mathbb{R}^{d}\right)$
when $A$ is an identity matrix. The question of what is the largest
space for $\mathcal{F}$ is a difficult one, though it is not necessary
for the main results of this paper, so we leave $\mathcal{F}$ unspecified.
The Dirichlet form is 
\[
\mathcal{E}\left(f,g\right)=\left\langle -\frac{1}{2}\sum_{p,q=1}^{d}a_{pq}\frac{\partial}{\partial x_{p}}\frac{\partial}{\partial x_{q}}f,g\right\rangle _{\mathsf{m}},\ \mathcal{F}=C_{0}^{\infty}\left(\mathbb{R}^{d}\right).
\]
}\end{example}

\begin{example} \textit{(Cauchy distribution and Cauchy processes).
Let}

\textit{ 
\[
X\sim\mathcal{C}\left(\theta,cI\right)=p_{c}\left(x\left|\theta\right.\right),\ Y\sim\mathcal{C}\left(\theta,cI\right)=p_{c}\left(y\left|\theta\right.\right)
\]
be independent $d$-dimensional multivariate Cauchy vectors with common
unknown location $\theta$, and let $p_{c}\left(x\left|\theta\right.\right)$
and $p_{c}\left(y\left|\theta\right.\right)$ denote the conditional
densities of $X$ and $Y$. The scale parameter $cI$ is known. When
observing a datum, $X=x$, the MLE is $\hat{\theta}=x$, and there
are no moments. The Bayes predictive distribution under a uniform
prior, $\pi_{U}\left(\theta\right)$, is 
\[
\hat{p}^{\pi_{U}}\left(y\left|x\right.\right)=\frac{1}{\sqrt{\pi^{d+1}}}\Gamma\left(\frac{d+1}{2}\right)\frac{c+c}{\sqrt{\left(\left\Vert y-x\right\Vert ^{2}+\left(c+c\right)^{2}\right)^{d+1}}}
\]
with its characteristic function $\varphi\left(z\right)=\exp\left(-2c\left\Vert z\right\Vert +i\left\langle x,z\right\rangle \right)$,
which is a Cauchy distribution with location, $x$, and scale $2c$,
the latter making it different from the plug-in predictive distribution:
\[
\hat{p}^{\pi_{U}}\left(y\left|x\right.\right)=\mathcal{C}\left(x,2cI\right).
\]
}

\textit{\label{transiCauchy} The potential density of $\left\{ p_{t}\left(\left.x\right|\theta\right);t>0\right\} $,
which has a Cauchy distribution, $\mathcal{C}\left(\theta,t\right)$,
as its transition probability is 
\[
\begin{aligned}\int_{0}^{\infty}p_{t}\left(x|\theta\right)dt & =\frac{1}{\pi^{\left(d+1\right)/2}}\varGamma\left(\frac{\left(d+1\right)}{2}\right)\int_{0}^{\infty}\frac{t}{\left(\left\Vert x-\theta\right\Vert ^{2}+t^{2}\right)^{\left(d+1\right)/2}}dt\\
 & =\begin{cases}
\infty & \left(d=1\right),\\
\frac{1}{2}\frac{1}{\pi^{\left(d+1\right)/2}}\varGamma\left(\frac{\left(d-1\right)}{2}\right)\left\Vert x\right\Vert ^{1-d} & \left(d\geq2\right).
\end{cases}
\end{aligned}
\]
Hence, the Cauchy process is recurrent for $d=1$ and transient for
$d\geq2$.}

\textit{The generator of the Cauchy process, $\mathcal{A}$, cannot
be represented using differential operators like the Brownian motion
can. For example, the generator for the 1-dimensional Cauchy process
can be expressed with integral operators, 
\begin{alignat*}{1}
\mathcal{A}f\left(x\right)= & \frac{1}{\pi}\int_{-\infty}^{\infty}\left\{ f\left(x+y\right)-f\left(x\right)\right\} \frac{1}{y^{2}}dy.
\end{alignat*}
Here, $f\left(x\right)$ is taken from the subspace of $L^{2}\left(\mathbb{R}^{d};\mathsf{m}\right)$,
such that $\mathcal{A}f\in L^{2}\left(\mathbb{R}^{d};\mathsf{m}\right)$.
Using Perseval's identity of the Fourier transform and the characteristic
function, the Dirichlet form can be written as, 
\begin{align*}
 & \mathcal{F}=\left\{ \left.f\in L^{2}\left(\mathbb{R}^{d};\mathsf{m}\right)\right|\int_{\mathbb{R}^{d}}\left|\hat{f}\left(z\right)\right|^{2}\left\Vert z\right\Vert dz<\infty\right\} ,\\
 & \mathcal{E}\left(f,g\right)=\int_{\mathbb{R}^{d}}\hat{f}\left(z\right)\hat{g}\left(z\right)\left\Vert z\right\Vert dz,\quad f,g\in\mathcal{F},
\end{align*}
where $\hat{f},\hat{g}$ are the Fourier transform of $f,g$ and $\left\Vert z\right\Vert $
is the Euclidean norm of $\mathbb{R}^{d}$. For further details, see
Appendix A.} \end{example}

\begin{example} \label{transiStable} \textit{(1-dimensional stable
distribution and stable processes). Let $\theta=\gamma$, and consider
an additional parameter $\alpha$. We consider the characteristic
function of a 1-dimensional symmetric stable distribution, $\textrm{Stable}\left(\alpha,\gamma,c\right)$,
with parameters, $\left(\alpha,\gamma,c\right)$: 
\[
\mathbb{E}\left[e^{izX}\right]=\exp\left(-c\left\Vert z\right\Vert ^{\alpha}+i\gamma z\right),\ \left(0<\alpha<2\right).
\]
We particularly consider when $0<\alpha\leqq1$. The statistical problem
is in estimating $\gamma$, and consider $\alpha,c$ to be known.
The random variable with this characteristic function is symmetric
around $\gamma$. Further, when $\alpha\leqq1$, $\mathbb{E}\left[X\right]=\infty$
does not have any moments. When $\alpha=1$, the density can be expressed,
using an elementary function, with a Cauchy distribution, $\mathcal{C}\left(\gamma,c\right)$,
though a density function that can be expressed using an elementary
function is not known when $0<\alpha<1$. We observe a datum, 
\[
x=X\sim\textrm{Stable}\left(\alpha,\gamma,c\right)
\]
and construct a Bayes predictive distribution, $p^{\pi_{U}}\left(y\left|x\right.\right)$,
with uniform prior, $\pi\left(\gamma\right)=1$. The characteristic
function, 
\[
\mathbb{E}\left[e^{izY}\right]=\exp\left(-2c\left\Vert z\right\Vert ^{\alpha}+ixz\right),
\]
is then, 
\[
\hat{p}^{\pi_{U}}\left(y\left|x\right.\right)=\textrm{Stable}\left(\alpha,x,2c\right).
\]
}

\textit{Consider the potential operator of a Markov process with a
1-dimensional stable distribution, $\textrm{Stable}\left(\alpha,0,ct\right)$
(without loss of generality, we set the location as $\gamma=0$),
as its transition probability. If $g\left(x\right)\in L^{1}\left(\mathbb{R}^{d};\mathsf{m}\right)$
and its Fourier transform, $\hat{g}\left(x\right)$, are both integrable
and $g\left(x\right)$ is continuous (therefore bounded), we have
the expression \citep{Sato_99}, 
\[
Rg\left(x\right)=\frac{1}{\left(2\pi\right)^{d}}\int_{\mathbb{R}^{d}}e^{i\left\langle z,x\right\rangle }\frac{1}{-\log\left(\mathbb{E}\left[e^{izX}\right]\right)}\hat{g}\left(-z\right)dz.
\]
Here, the real part of the characteristic function is, 
\[
\mathfrak{Re}\left(-\log\left(\mathbb{E}\left[e^{izX}\right]\right)\right)=c\left\Vert z\right\Vert ^{\alpha}
\]
thus, when $0<\alpha<1$, we have 
\[
\int_{\mathbb{R}^{d}}e^{i\left\langle z,x\right\rangle }\frac{1}{c\left\Vert z\right\Vert ^{\alpha}}dz<\infty,
\]
which is transient.}

\textit{Using Perseval's identity for the Fourier transform and the
characteristic function, as with the Cauchy process, the Dirichlet
form can be written as, 
\begin{align*}
 & \mathcal{F}=\left\{ \left.f\in L^{2}\left(\mathbb{R}^{d};\mathsf{m}\right)\right|\int_{\mathbb{R}^{d}}\left|\hat{f}\left(z\right)\right|^{2}\left\Vert z\right\Vert ^{\alpha}dz<\infty\right\} ,\\
 & \mathcal{E}\left(f,g\right)=\int_{\mathbb{R}^{d}}\hat{f}\left(z\right)\hat{g}\left(z\right)\left\Vert z\right\Vert ^{\alpha}dz,\quad f,g\in\mathcal{F}.
\end{align*}
For further details, see Appendix A.1.} \end{example}

The Bayes predictive distribution, $\hat{p}^{\pi}\left(y\left|x\right.\right)$,
with a proper prior, derives a continuous-time Markov chain with initial
value, $x$, and takes the value, $y$, at time, $t=1$. However,
the analytic expression of the continuous-time, Markovian semigroup
is not as easy as $\hat{p}^{\pi_{U}}$, and the expression of the
cylindrical measure is difficult to obtain as well. Although, it is
known that the stationary probability measure is $M^{\pi}\left(x;c\right)$
and induces a stationary Markov process. Here, the initial value is
$x$, and the stationary distribution follows the marginal likelihood,
$M^{\pi}\left(x;c\right)$. Denote the cylindrical measure as $\mathbb{Q}_{x}$.
Because making $M^{\pi}\left(x;c\right)$ a stationary distribution
is not sufficient to uniquely determine the stationary Markov process,
there are multiple choices regarding the cylindrical measure, $\mathbb{Q}_{x}$.
The least we can say is that the Bayes predictive distribution, $\hat{p}^{\pi}\left(y\left|x\right.\right)$,
with a proper prior, is a marginal distribution of a cylindrical measure,
$\mathbb{Q}_{x}$, at time, $t=\left\{ 0,1\right\} $.

The necessary and sufficient condition for admissibility, as in Theorem~\ref{thm.Dirichlet-Recu},
is equivalent to the condition for recurrence of the Dirichlet form.
Conversely, inadmissibility is in an equivalent relation to the transience
of the Dirichlet form (Theorem~\ref{thm:Dirichlet-Tran}).

\section{Variational formula of Kullback-Leibler}

This section will present the main result of this paper. In this section,
we show Eq.~\eqref{eq:BRD=00003D00003D00003DDirichlet}: 
\[
\int_{\mathbb{R}^{d}}\mathsf{KL}\left(\hat{p}^{\pi}\left|\hat{p}^{\pi_{U}}\right.\right)M^{\pi}\left(x;c\right)dx=\mathcal{E}\left(\sqrt{M^{\pi}},\sqrt{M^{\pi}}\right).
\]

\cite{Brown_71} also shows, under quadratic loss, through direct
transformation of the Bayes risk difference, the Dirichlet form. \cite{Eaton_92}
derives an inequality that bounds the Bayes risk difference by the
Dirichlet form of the Markov process. For their derivations, the loss
function and Dirichlet form are directly connected, making it elementary.
This is because the Dirichlet form is in quadratic form, and within
quadratic loss, \cite{Brown_71} derives the generator of the Brownian
motion from Stein's equality, and \cite{Eaton_92} directly derives
the Dirichlet form by looking at the Markov transition probability
as the posterior distribution. On the other hand, this paper's derivation
is by corresponding the predictive distribution and transition probability,
where the correspondence between the KL risk and Dirichlet form is
done by analyzing the Markovian semigroup.

The proof strategy is to: 
\begin{enumerate}
\item Show that the Bayes risk difference of the KL loss, $\int_{\mathbb{R}^{d}}\mathsf{KL}\left(\hat{p}^{\pi}\left|\hat{p}^{\pi_{U}}\right.\right)M^{\pi}\left(x;c\right)dx$,
and the rate function, $I\left(M^{\pi}\right)$ (defined below), is
equal; 
\item Show that the rate function, $I\left(M^{\pi}\right)$, and the Dirichlet
form, $\mathcal{E}\left(\sqrt{M^{\pi}},\sqrt{M^{\pi}}\right)$, is
equal. 
\end{enumerate}
Now, denote the Markovian semigroup under $\hat{p}^{\pi_{U}}$ or
$\mathbb{P}_{x}$ as $\left\{ T_{t}\right\} $, its infinitesimal
generator, $\mathcal{A}$, and the functional space that defines $\mathcal{A}$
as $\mathcal{D}\left(\mathcal{A}\right)$. For $g\in\mathcal{D}\left(\mathcal{A}\right)$,
the functional, $\varphi$, is 
\[
\varphi\left(h,g,\varepsilon\right)=\int_{\mathbb{R}^{d}}\log\frac{g\left(x\right)+\varepsilon}{\left(T_{h}g\right)\left(x\right)+\varepsilon}M^{\pi}\left(x;c\right)dx.
\]
This functional, $\varphi$, represents the expected log likelihood
ratio between the initial distribution, $g\left(x\right)$, and the
Markov process after infinitesimal time, $h$. Here, $g$ is interpreted
as the parameter in the likelihood ratio, and the domain of definition
is $g\in\mathcal{D}\left(\mathcal{A}\right)\subset L^{2}\left(\mathbb{R}^{d},\mathsf{m}\right)$
($g$ is not necessarily a probability density function).

Then, \begin{lemma}\label{lem:rate-KL} Denote $p_{h}=\hat{p}_{h}^{\pi_{U}}\left(y\left|x\right.\right)$.
Then, 
\begin{equation}
\sup_{g\in\mathcal{D}\left(\mathcal{A}\right),\varepsilon>0}\varphi\left(h,g,\varepsilon\right)=\int_{\mathbb{R}^{d}}\mathsf{KL}\left(\hat{p}^{\pi}\left|p_{h}\right.\right)M^{\pi}\left(x;c\right)dx\label{eq:rate-KL}
\end{equation}
holds. Note that $h$ need not be infinitesimal time for this lemma
to hold.\end{lemma}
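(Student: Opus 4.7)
The plan is to recognize the functional $\varphi(h,g,\varepsilon)$ as a restricted Donsker--Varadhan trial expression for the integrated conditional KL divergence, after invoking the invariance of $M^{\pi}$ under the reversible kernel $\hat{p}^{\pi}$ established in Eq.~\eqref{eq:Marginal}. Once $\varphi$ is rewritten in this form, the ``$\le$'' direction is an immediate pointwise application of the Donsker--Varadhan variational formula, while the ``$\ge$'' direction requires an approximation argument inside $\mathcal{D}(\mathcal{A})$.

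The reduction proceeds as follows. Substituting $u = g + \varepsilon > 0$ and using that $\{T_t\}$ is a conservative Markov semigroup (so $T_h 1 = 1$, and hence $T_h g + \varepsilon = T_h u$), one obtains $\varphi(h,g,\varepsilon) = \int (\log u - \log T_h u)\,dM^{\pi}$. Writing $u = e^f$ and invoking the invariance identity $\int f(y)\,M^{\pi}(y)\,dy = \iint f(y)\,\hat{p}^{\pi}(y|x)\,dy\,M^{\pi}(x)\,dx$ yields
\[
\varphi(h,g,\varepsilon) = \int_{\mathbb{R}^{d}} M^{\pi}(x) \left[\int f(y)\,\hat{p}^{\pi}(y|x)\,dy - \log\!\int e^{f(z)}\,p_h(z|x)\,dz\right] dx,
\]
so that the bracket at each fixed $x$ is precisely the Donsker--Varadhan trial functional for $\mathsf{KL}(\hat{p}^{\pi}(\cdot|x)\,|\,p_h(\cdot|x))$.

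For the ``$\le$'' direction, the pointwise Donsker--Varadhan variational formula bounds the bracket by $\mathsf{KL}(\hat{p}^{\pi}(\cdot|x)\,|\,p_h(\cdot|x))$; integrating against $M^{\pi}$ yields $\sup_{g,\varepsilon} \varphi \le \int \mathsf{KL}(\hat{p}^{\pi}|p_h)\,M^{\pi}\,dx$. For the reverse inequality, the plan is to construct a sequence $\{g_n\}\subset\mathcal{D}(\mathcal{A})$ and $\varepsilon_n \downarrow 0$ such that $f_n = \log(g_n + \varepsilon_n)$ approaches the pointwise maximizer in a suitable averaged sense, using the density of $\mathcal{D}(\mathcal{A})$ in $L^2(\mathbb{R}^d;\mathsf{m})$ together with a truncation-and-mollification scheme; a monotone/dominated convergence argument then transfers the approximation through both integrals defining $\varphi$.

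The principal difficulty is the ``$\ge$'' direction. The pointwise Donsker--Varadhan maximizer $f_x^*(y) = \log \hat{p}^{\pi}(y|x)/p_h(y|x)$ depends on both $x$ and $y$, whereas admissible test functions $g\in\mathcal{D}(\mathcal{A})$ depend only on a single variable. Bridging this gap requires the combined use of the reversibility of $\hat{p}^{\pi}$ with respect to $M^{\pi}$, the symmetry $p_h(y|x) = p_h(x|y)$ inherited from the convolution semigroup structure of $\hat{p}^{\pi_U}$, and the $\varepsilon$-regularization; together these will show that single-variable trial functions suffice, on a dense subset, to saturate the pointwise bound and recover the full conditional-KL integral in the supremum limit.
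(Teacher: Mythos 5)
Your reduction of $\varphi$ and your ``$\le$'' direction coincide with the paper's argument: set $u=g+\varepsilon$, use conservativeness of $\{T_t\}$ to write $T_hg+\varepsilon=T_hu$, substitute $\varPhi=\log u$, use the invariance $\mathbb{E}^{M^{\pi}}\left[\varPhi\right]=\mathbb{E}^{M^{\pi}}\left[\mathbb{E}^{\hat{p}^{\pi}}\left[\varPhi\right]\right]$ from Eq.~\eqref{eq:Marginal}, and apply the Donsker--Varadhan formula pointwise in $x$ under the outer integral. Up to that point you are on the paper's track.

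The gap is the ``$\ge$'' direction, precisely at the point you yourself flag as the principal difficulty. Your plan---truncate and mollify toward the pointwise maximizer $f_x^{*}\left(y\right)=\log\left(\hat{p}^{\pi}\left(y\left|x\right.\right)/p_h\left(y\left|x\right.\right)\right)$ using density of $\mathcal{D}\left(\mathcal{A}\right)$ in $L^{2}$---does not address the real obstruction: the defect of a single-variable test function $f$ is $\int\mathsf{KL}\left(\hat{p}^{\pi}\left(\cdot\left|x\right.\right)\left|p_h^{f}\left(\cdot\left|x\right.\right)\right.\right)M^{\pi}\left(dx\right)$ with $p_h^{f}\propto e^{f}p_h$, and driving this to zero requires $f_x^{*}\left(y\right)$ to be additively separable as $f\left(y\right)+c\left(x\right)$; density in $L^{2}$ buys nothing toward that, and invoking reversibility, the symmetry $p_h\left(y\left|x\right.\right)=p_h\left(x\left|y\right.\right)$, and the $\varepsilon$-regularization is a statement of intent rather than an argument. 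The paper closes this direction by a different route that never approximates the conditional maximizers: Jensen's inequality applied to the outer expectation gives
\begin{equation*}
\mathbb{E}^{M^{\pi}}\left[\varPhi\right]-\mathbb{E}^{M^{\pi}}\left[\log\mathbb{E}^{p_{h}}\left[e^{\varPhi}\right]\right]\geqq\mathbb{E}^{M^{\pi}}\left[\varPhi\right]-\log\mathbb{E}^{p_{M}}\left[e^{\varPhi}\right],\qquad p_{M}\left(y\right)=\int p_{h}\left(y\left|x\right.\right)M^{\pi}\left(x;c\right)dx,
\end{equation*}
whose supremum over $\varPhi$ is $\mathsf{KL}\left(M^{\pi}\left|p_{M}\right.\right)$ by the variational formula for the pair of \emph{marginals}; a chain-rule decomposition of the relative entropy obtained by conditioning on the starting point (the paper's Lemma~\ref{lem:4.3}) then identifies $\mathsf{KL}\left(M^{\pi}\left|p_{M}\right.\right)$ with $\mathbb{E}^{M^{\pi}}\left[\mathsf{KL}\left(\hat{p}^{\pi}\left|p_{h}\right.\right)\right]$. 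In particular, the near-optimal single-variable test functions are regularizations of $\log\left(dM^{\pi}/dp_{M}\right)$, not mollifications of the conditional log-ratios, so your construction is aimed at the wrong target. To repair the proof you must supply the Jensen step together with the conditional KL decomposition, or an equivalent mechanism.
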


\begin{proof}

See Supplementary Material Appendix~\ref{app:Proof-of-Lemma_rate-KL}.

\end{proof}

Simply, this lemma states that if the expectation is a stationary
distribution, $M^{\pi}\left(x;c\right)$, then the maximum (expected)
log likelihood ratio is equivalent to the KL risk.

Next, we will define the rate functional. Let $\mathcal{B}_{b}^{+}\left(\mathbb{R}^{d}\right)$
be the set of non-negative, Borel measurable functions $u:\mathbb{R}^{d}\mapsto\mathbb{R}_{+}$
on $\mathbb{R}^{d}$, and set $u_{\varepsilon}=u+\varepsilon$ for
$\varepsilon>0$. The functional $I\left(M^{\pi}\right)$ is defined
as 
\[
I\left(M^{\pi}\right)\triangleq\sup_{u\in\mathcal{B}_{b}^{+}\left(\mathbb{R}^{d}\right),\varepsilon>0}\int_{\mathbb{R}^{d}}\frac{\left(-\mathcal{A}u_{\varepsilon}\right)\left(x\right)}{u_{\varepsilon}\left(x\right)}M^{\pi}\left(x;c\right)dx.
\]
$\mathcal{A}$ is the infinitesimal generator of the Markovian semigroup,
$\left\{ T_{t}\right\} $, under $\hat{p}^{\pi_{U}}$ or $\mathbb{P}_{x}$.
This $I$ is referred to as the rate function in the large deviation
literature. Then, we have

\begin{theorem} \label{prop:rate functional} 
\begin{equation}
\lim_{h\downarrow0}\frac{1}{h}\sup_{g\in\mathcal{D}\left(\mathcal{A}\right),\varepsilon>0}\varphi\left(h,g,\varepsilon\right)=I\left(M^{\pi}\right).\label{eq:rate functional}
\end{equation}
\begin{proof} See Supplementary Material Appendix~\ref{app: rate functional}.
\end{proof} \end{theorem}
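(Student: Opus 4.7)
The plan is to obtain the conclusion by producing matching limsup and liminf bounds via an exact Dynkin-type representation of $\varphi$, which lets the $h$-derivative be identified with the integrand in $I(M^{\pi})$ uniformly in the test function.

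First I would isolate the semigroup calculus. Fix $g\in\mathcal{D}(\mathcal{A})$ with $g+\varepsilon>0$, write $u_{\varepsilon}:=g+\varepsilon$, and use $T_{h}1=1$ to get $T_{h}u_{\varepsilon}=T_{h}g+\varepsilon$. Strong continuity of $\{T_{s}\}$ together with $(d/ds)T_{s}u_{\varepsilon}=T_{s}\mathcal{A}u_{\varepsilon}$ (Dynkin) and the chain rule yields
\[
\log\frac{T_{h}u_{\varepsilon}}{u_{\varepsilon}}=\int_{0}^{h}\frac{T_{s}\mathcal{A}u_{\varepsilon}}{T_{s}u_{\varepsilon}}\,ds.
\]
Integrating against $M^{\pi}(x;c)\,dx$ and applying Fubini gives the key identity
\[
\varphi(h,g,\varepsilon)=\int_{0}^{h}\Psi_{s}(g,\varepsilon)\,ds,\qquad \Psi_{s}(g,\varepsilon):=\int_{\mathbb{R}^{d}}\frac{-\mathcal{A}(T_{s}u_{\varepsilon})}{T_{s}u_{\varepsilon}}\,M^{\pi}(x;c)\,dx.
\]
Note that this is an exact equality in $h$, not an $o(h)$ expansion, and it is the workhorse for both bounds.

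For the upper bound I would observe that for every $s\in[0,h]$ the function $T_{s}u_{\varepsilon}=T_{s}g+\varepsilon$ is again a bounded, strictly positive shift by $\varepsilon$ of a non-negative Borel function (using $T_{s}\mathcal{B}^{+}_{b}\subset\mathcal{B}^{+}_{b}$), so it is admissible in the variational definition of $I(M^{\pi})$. Hence $\Psi_{s}(g,\varepsilon)\leqq I(M^{\pi})$ for each $s$, giving $\varphi(h,g,\varepsilon)\leqq h\cdot I(M^{\pi})$, and consequently
\[
\limsup_{h\downarrow 0}\frac{1}{h}\sup_{g\in\mathcal{D}(\mathcal{A}),\varepsilon>0}\varphi(h,g,\varepsilon)\leqq I(M^{\pi}).
\]
For the lower bound, continuity of $s\mapsto\Psi_{s}(g,\varepsilon)$ at $s=0$ (from strong continuity of $T_{s}$ and dominated convergence, using that $T_{s}u_{\varepsilon}\geqq\varepsilon$ makes $1/T_{s}u_{\varepsilon}$ uniformly bounded) yields $h^{-1}\varphi(h,g,\varepsilon)\to \Psi_{0}(g,\varepsilon)=\int\frac{-\mathcal{A}u_{\varepsilon}}{u_{\varepsilon}}M^{\pi}dx$. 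Thus
\[
\liminf_{h\downarrow 0}\frac{1}{h}\sup_{g,\varepsilon}\varphi(h,g,\varepsilon)\geqq \sup_{g\in\mathcal{D}(\mathcal{A}),\,\varepsilon>0}\int\frac{-\mathcal{A}u_{\varepsilon}}{u_{\varepsilon}}\,M^{\pi}\,dx.
\]
A standard mollification/resolvent approximation ($u\mapsto\alpha G_{\alpha}u$) shows this restricted supremum equals the unrestricted $I(M^{\pi})$ over $u\in\mathcal{B}^{+}_{b}$, since $\alpha G_{\alpha}u\in\mathcal{D}(\mathcal{A})$ and $\alpha G_{\alpha}u\to u$ pointwise with corresponding convergence of the ratio $-\mathcal{A}u_{\varepsilon}/u_{\varepsilon}$.

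The main obstacle is justifying the Dynkin–chain-rule step in the generality of the paper: the generator $\mathcal{A}$ for the Cauchy and symmetric stable processes is a non-local integro-differential operator, so the pointwise identity $(d/ds)\log T_{s}u_{\varepsilon}=T_{s}\mathcal{A}u_{\varepsilon}/T_{s}u_{\varepsilon}$ and its integrability against the (possibly non-invariant) measure $M^{\pi}\,dx$ need verification. I would handle this by first proving the identity on the core of $\mathcal{D}(\mathcal{A})$ where $g$ is bounded with bounded $\mathcal{A}g$, using that $T_{s}u_{\varepsilon}\geqq\varepsilon>0$ to keep $1/(T_{s}u_{\varepsilon})$ bounded and thereby apply Fubini and differentiation under the integral sign legally, and then pass to the full supremum by the resolvent approximation above. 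The related subtlety — interchanging $\sup_{g,\varepsilon}$ with $\liminf_{h\downarrow 0}$ — is bypassed cleanly because the upper bound $\Psi_{s}\leqq I(M^{\pi})$ is uniform in $(g,\varepsilon)$ and the lower bound only requires a single fixed admissible $(g,\varepsilon)$ before taking its sup.
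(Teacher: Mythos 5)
Your argument is correct and is essentially the paper's own proof (which follows Jain--Krylov): your uniform upper bound $\Psi_{s}(g,\varepsilon)\leqq I(M^{\pi})$ obtained from the exact identity $\varphi(h,g,\varepsilon)=\int_{0}^{h}\Psi_{s}\,ds$ is just the integrated form of the paper's bound $d\varphi/dh\geqq-I(\mu)$, and your lower bound via continuity of $s\mapsto\Psi_{s}$ at $s=0$ together with $\liminf\sup\geqq\sup\liminf$ matches the paper's first-order expansion of $\log p_{h}v_{\varepsilon}$ with $O(h^{2})$ remainder. The resolvent approximation you invoke to pass between the restricted supremum over $\mathcal{D}_{+}(\mathcal{A})$ and the supremum over $\mathcal{B}_{b}^{+}(\mathbb{R}^{d})$ is precisely the content of the cited Jain--Krylov lemma (Lemma~\ref{Lemma211}) that the paper uses for the same purpose.
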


If we integrate both sides of Eq.~(\ref{eq:rate functional}) with
regard to $h$ from $0$ to $1$, from Eq.~(\ref{eq:rate-KL}), we
have, 
\begin{equation}
\int_{\mathbb{R}^{d}}\mathsf{KL}\left(\hat{p}^{\pi}\left|\hat{p}^{\pi_{U}}\right.\right)M^{\pi}\left(x;c\right)dx=\sup_{g\in\mathcal{D}\left(\mathcal{A}\right),\varepsilon>0}\varphi\left(1,g,\varepsilon\right)=I\left(M^{\pi}\right).\label{eq:KL-rate}
\end{equation}
Additionally, for the rate function, $I\left(M^{\pi}\right)$, the
following variational formula holds.

\begin{theorem} \label{prop:rate variational} Let $\left(\mathcal{E},\mathcal{F}\right)$
be the Dirichlet form with measure $\mathbb{P}_{x}$, i.e., a Dirichlet
form that follows a Markov process with transition probability, $\hat{p}_{t}^{\pi_{U}}$.
Then, we have $\sqrt{M^{\pi}}\in\mathcal{F}$ and 
\[
I\left(M^{\pi}\right)=\mathcal{E}\left(\sqrt{M^{\pi}},\sqrt{M^{\pi}}\right)
\]
holds. \begin{proof} See Supplementary Material Appendix~\ref{app:rate variational}.
\end{proof} \end{theorem}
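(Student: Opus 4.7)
The plan is to establish $I(M^\pi) = \mathcal{E}(\sqrt{M^\pi},\sqrt{M^\pi})$ by reducing the variational problem to a Dirichlet-form inequality. Write $f = \sqrt{M^\pi}$, so $M^\pi = f^2$, and let $g = u_\varepsilon = u+\varepsilon$ for $u\in\mathcal{B}_b^+$. Since $g\geq\varepsilon>0$ is bounded, $f^2/g\in L^2\cap L^\infty$, and the symmetry of $\mathcal{A}$ in $L^2(\mathbb{R}^d;\mathsf{m})$ gives, whenever $g\in\mathcal{D}(\mathcal{A})$,
\[
\int_{\mathbb{R}^d}\frac{-\mathcal{A} g(x)}{g(x)}\,M^\pi(x;c)\,dx \;=\; \bigl\langle -\mathcal{A} g,\, f^2/g\bigr\rangle_{\mathsf{m}} \;=\; \mathcal{E}(g,\, f^2/g).
\]
The theorem then reduces to (a) the upper bound $\mathcal{E}(g,f^2/g)\leq\mathcal{E}(f,f)$ uniformly in admissible $g$, and (b) an approximating sequence $g_n\to f$ along which the bound is saturated.

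For (a), I would invoke the Beurling-Deny decomposition $\mathcal{E} = \mathcal{E}^{(c)}+\mathcal{E}^{(j)}$; the killing part vanishes because $\hat{p}_t^{\pi_U}$ is a convolution semigroup of probability measures. For the strongly local part, the carré-du-champ $\Gamma$ obeys a Leibniz rule, and a direct ``completion of the square'' gives
\[
\mathcal{E}^{(c)}(f,f) - \mathcal{E}^{(c)}(g,f^2/g) \;=\; \tfrac{1}{2}\int \Gamma\bigl(f-(f/g)g,\;f-(f/g)g\bigr)\,d\mathsf{m} \;\geq\; 0.
\]
For the jump component, the pointwise algebraic identity
\[
(f(x)-f(y))^2 - (g(x)-g(y))\bigl((f^2/g)(x)-(f^2/g)(y)\bigr) \;=\; \bigl(f(y)\sqrt{g(x)/g(y)} - f(x)\sqrt{g(y)/g(x)}\bigr)^2,
\]
integrated against the Lévy jump kernel $J(dx,dy)$, yields $\mathcal{E}^{(j)}(g,f^2/g)\leq\mathcal{E}^{(j)}(f,f)$. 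Equality in either piece forces $g\propto f$, so the bound is tight precisely at $g=f$.

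For (b), the membership $\sqrt{M^\pi}\in\mathcal{F}$ is already furnished by Lemma~\ref{lem:Diri<prior}. I would take $u_n = f\wedge n\in\mathcal{B}_b^+$ and set $g_n = u_n + 1/n$, so that $g_n\to f$ pointwise and in $L^2$. Because $t\mapsto (t\wedge n)+1/n$ is a normal contraction at every level, the Markov property of closed symmetric Dirichlet forms together with Lemma~\ref{lem:Diri<prior} yield $g_n\in\mathcal{F}$ with $g_n\to f$ in the form norm, whence $\mathcal{E}(g_n,f^2/g_n)\to\mathcal{E}(f,f)$ by polarization and Cauchy-Schwarz. Since each $g_n$ is admissible in the definition of $I$, this gives $I(M^\pi)\geq\mathcal{E}(f,f)$, and combined with (a) the two bounds match.

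The principal technical obstacle is that $\mathcal{B}_b^+$ in the definition of $I$ is strictly larger than $\mathcal{D}(\mathcal{A})$, so the identity equating $\int(-\mathcal{A} g/g)\,M^\pi\,d\mathsf{m}$ with $\mathcal{E}(g,f^2/g)$ is not available verbatim for every admissible $u$. The plan is first to establish the equality on the dense subclass $\mathcal{D}(\mathcal{A})\cap\mathcal{B}_b^+$ and then to show that enlarging to $\mathcal{B}_b^+$ does not change the supremum, via a semigroup mollification $T_t u$ and the spectral calculus for $\mathcal{A}$. Managing the division by $u_\varepsilon$ on sets where $M^\pi$ is small, and coordinating the two limits $\varepsilon\downarrow 0$ and $n\to\infty$ so that Fatou-type estimates close the gap in both directions, is where the genuine analytic work is concentrated.
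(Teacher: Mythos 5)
Your upper bound (a) is a genuinely different and essentially correct route to one half of the theorem. The paper proves $I(M^{\pi})\leqq\mathcal{E}(\sqrt{M^{\pi}},\sqrt{M^{\pi}})$ probabilistically, via the $(\phi+\varepsilon)^{2}\mathsf{m}$-symmetric Feynman--Kac semigroup $P_{t}^{\phi}$ and the $L^{2}$-contractivity of $S_{t}^{\phi}$, whereas you prove the same inequality analytically through the Beurling--Deny decomposition; your pointwise identity
\[
\left(f(x)-f(y)\right)^{2}-\left(g(x)-g(y)\right)\left(\tfrac{f^{2}(x)}{g(x)}-\tfrac{f^{2}(y)}{g(y)}\right)=\left(f(y)\sqrt{g(x)/g(y)}-f(x)\sqrt{g(y)/g(x)}\right)^{2}
\]
checks out, and since the relevant semigroups here are convolution semigroups (no killing part), the decomposition argument applies. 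What the paper's route buys is that it never needs $f^{2}/g\in\mathcal{F}$: the contractivity of $S_{t}^{\phi}$ delivers $-\int\frac{\mathcal{A}\phi}{\phi+\varepsilon}f\,d\mathsf{m}\leqq\mathcal{E}(\sqrt{f},\sqrt{f})$ directly, while your pairing $\left\langle -\mathcal{A}g,f^{2}/g\right\rangle_{\mathsf{m}}=\mathcal{E}(g,f^{2}/g)$ requires you to first establish $f^{2}/g\in\mathcal{F}$ (not automatic when the form is transient, since $1/g$ tends to a nonzero constant at infinity and constants are not in $\mathcal{F}$).

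The genuine gap is in your lower bound (b). The sequence $g_{n}=(f\wedge n)+1/n$ lies in $\mathcal{F}$ but not in $\mathcal{D}(\mathcal{A})$, so $\mathcal{A}g_{n}$ is undefined and $g_{n}$ is not admissible in $I$; worse, the claimed convergence $\mathcal{E}(g_{n},f^{2}/g_{n})\rightarrow\mathcal{E}(f,f)$ is unsupported, because $f^{2}/g_{n}=f^{2}/(n+1/n)$ exceeds $f$ on $\left\{ f>n\right\}$, its membership in $\mathcal{F}$ is unclear, and form-norm convergence to $f$ does not follow from normal contractions. This is precisely the step the paper's proof is engineered to handle: it replaces the generator by the resolvent, proving $I_{\alpha}(\mu)\leqq I(\mu)/\alpha$ by integrating $\phi'(\beta)\geqq-I(\mu)/\beta^{2}$ over $\beta\in[\alpha,\infty)$, then tests $I_{\alpha}$ with the truncation $f^{n}=\sqrt{f}\wedge n$ using only the elementary bound $\log(1-x)\leqq-x$, and finally passes $n\rightarrow\infty$, $\varepsilon\rightarrow0$ to get $\alpha\left\langle \sqrt{f},\sqrt{f}-\alpha R_{\alpha}\sqrt{f}\right\rangle _{\mathsf{m}}\leqq I(\mu)$, which yields both $\sqrt{f}\in\mathcal{F}$ and $\mathcal{E}(\sqrt{f},\sqrt{f})\leqq I(\mu)$ as $\alpha\rightarrow\infty$. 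Note also that this extracts $\sqrt{M^{\pi}}\in\mathcal{F}$ from $I(M^{\pi})<\infty$ alone, whereas your appeal to Lemma~\ref{lem:Diri<prior} presupposes $\sqrt{\pi}\in\mathcal{F}$. Your proposed fix by semigroup mollification is plausible in spirit, but as written it is the missing core of the argument rather than a routine detail.
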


From this, we have shown Theorem~\ref{prop:BayesRisk}.

Finally, we show Corollary~\ref{cor:Cauchy} and Corollary~\ref{cor:Stable}.
For the case when $d=1$ for Corollary~\ref{cor:Cauchy}, we have
already shown in \ref{subsec:Sufficient-conditions}.

We first show for the 1-dimensional symmetric stable distribution,
$\textrm{Stable}\left(\alpha,\gamma,c\right)$, with the exponent
$\alpha,\left(0<\alpha<1\right)$. We can also show the inadmissibility
of the Cauchy distribution for $d\geqq2$ in the same manner. The
Bayes risk difference is 
\[
B_{\mathsf{KL}}\left(\theta,\hat{p}^{\pi_{U}}\right)-B_{\mathsf{KL}}\left(\theta,\hat{p}^{\pi}\right)=\mathcal{E}\left(\sqrt{M^{\pi}},\sqrt{M^{\pi}}\right).
\]
Therefore, for any sequence of proper priors, $\left\{ \pi_{n}\right\} $,
if $\mathcal{E}\left(\sqrt{M^{\pi}},\sqrt{M^{\pi}}\right)$ is bounded
away from zero, $\hat{p}^{\pi_{U}}$ is inadmissible. The root $\sqrt{M^{\pi}}$
of the marginal likelihood satisfies 
\[
\int_{\mathbb{R}^{d}}\left(\sqrt{M^{\pi}\left(x\right)}\right)^{2}dx=\int_{\mathbb{R}^{d}}\int_{\mathbb{R}^{d}}p_{c}\left(x\left|\theta\right.\right)\pi\left(\theta\right)d\theta dx=1,
\]
therefore we have $\sqrt{M_{t}^{\pi}}\in L^{2}\left(\mathbb{R}^{d};\mathsf{m}\right)$.
The reference function $g$ is defined as 
\[
g\left(x\right)=\frac{\sqrt{M^{\pi}\left(x\right)}}{\max\left\{ \int_{0}^{\infty}T_{s}\sqrt{M^{\pi}\left(x\right)}ds,1\right\} }.
\]
Since $\left\{ T_{t}\right\} _{t\geqq0}$ is transient from Example~\ref{transiStable},
we have $\int_{0}^{\infty}T_{s}\sqrt{M^{\pi}\left(x\right)}ds<\infty$.
Thus, from Eq.~(\ref{eq:transiDiri}), we have 
\[
0<\int_{\mathbb{R}^{d}}g\left(x\right)\sqrt{M^{\pi}\left(x\right)}dx\leqq\mathcal{E}\left(\sqrt{M^{\pi}},\sqrt{M^{\pi}}\right),
\]
which completes the proof.

\section{Discussion}

\subsection{Generalization to the infinitely divisible distribution}

We consider the infinitely divisible distribution as a generalization
of distributions, such as the Normal, Poisson, Cauchy/stable, negative
binomial, exponential, and $\varGamma$. For a random variable, $X\in\mathbb{R}^{d}$,
to follow an infinitely divisible distribution, is to say that the
characteristic function can be written as the standard form of the
Lévy distribution: 
\begin{alignat}{1}
\mathbb{E}\left[e^{izX}\right]= & \exp\left[-\frac{1}{2}\left\langle z,Az\right\rangle +\int_{\mathbb{R}^{d}}\left(e^{i\left\langle z,x\right\rangle }-1-\frac{i\left\langle z,x\right\rangle }{1+\left|x\right|^{2}}\right)\nu\left(dx\right)+i\left\langle \gamma,z\right\rangle \right].\label{eq:ch_ID}
\end{alignat}
Here, $A$ is a non-negative symmetric matrix, $\nu$ is measurable
on $\mathbb{R}^{d}$ and satisfies $\nu\left\{ 0\right\} =0$ and
$\int\left(\left|x\right|^{2}\land1\right)\nu\left(dx\right)<\infty$,
and $\gamma\in\mathbb{R}^{d}$. The trio, $\left(A,\nu,\gamma\right)$,
is called the generating element of $\mu$. $\nu$ is called the Lévy
measure of $\mu$. $\gamma$ is equivalent to the location parameter.
For example, the $\mathbb{R}^{d}$ Gaussian distribution is when $\nu=0$,
the compound Poisson distribution, we have $\nu=c\sigma$ and $A=0,\gamma=0$,
and the Poisson distribution on $\mathbb{R}$ is when $A=0,\gamma_{0}=0,\nu=c\delta_{1}$.

Let the data, $X$ be given as 
\[
X=\theta+\xi
\]
with the estimand being $\theta\left(\in\mathbb{R}^{d}\right)$. Here,
$\xi$ is a random variable that follows a symmetrized infinitely
divisible distribution, given as $\xi=\xi^{\prime}-\xi^{\prime\prime}$,
where $\xi^{\prime},\xi^{\prime\prime}$ is independently generated
from the same infinitely divisible distribution. $X$ is symmetric
regarding $\theta$: $\left(X-\theta\right)\overset{\textrm{d}}{=}-\left(X-\theta\right)$.
We only consider the MLE when the predictive distribution is a plug-in
estimator. For the Bayes predictive distribution, even if the prior
is improper, it does not preserve the distributional shape \citep[e.g., the Poisson predictive distribution]{Komaki_04}.
Therefore it is difficult to make claims about families of distributions
in a holistic manner.

Let \eqref{eq:ch_ID} be the characteristic function of the infinitely
divisible function from which $\xi$ follows. Consider out of $\left(A,\nu,\gamma\right)$,
$A,\nu$ to be known, and $\gamma=0$. When there is one observation,
$X=x$, if we plug-in $\hat{\theta}=x$ as the estimate of $\theta$,
the characteristic function of the predictive distribution $\hat{p}\left(y\left|x\right.\right)$
based on the MLE $\hat{\theta}=x$ is 
\begin{alignat*}{1}
\mathbb{E}\left[e^{izY}\right]= & \exp\left[-\frac{1}{2}\left\langle z,Az\right\rangle +\int_{\mathbb{R}^{d}}\left(e^{i\left\langle z,y\right\rangle }-1-\frac{i\left\langle z,y\right\rangle }{1+\left|y\right|^{2}}\right)\nu\left(dy\right)+i\left\langle x,z\right\rangle \right].
\end{alignat*}
The Dirichlet for of $\hat{p}\left(y\left|x\right.\right)$ can be
made explicit using the characteristic function of the symmetric convolution
semigroup.

\begin{theorem}\label{Thm:InfiniteDivisible}

The Dirichlet form associated with the predictive distribution, $\hat{p}\left(y\left|x\right.\right)$,
is 
\begin{align*}
 & \mathcal{F}=\left\{ f\in L^{2}\left(\mathbb{R}^{d};\mathsf{m}\right):\int_{\mathbb{R}^{d}}\left\Vert \hat{f}\left(z\right)\right\Vert ^{2}\psi\left(z\right)dz<\infty\right\} ,\\
 & \mathcal{E}\left(f,g\right)=\int_{\mathbb{R}^{d}}\hat{f}\left(z\right)\hat{g}\left(z\right)\psi\left(z\right)dz,\quad f,g\in\mathcal{F},
\end{align*}
where $\psi\left(z\right)$ is the logarithm of characteristic function
$-\mathbb{E}\left[e^{izY}\right]$ and $\hat{f}\left(z\right),\hat{g}\left(z\right)$
are the Fourier transformation of the integrable function, $f,g$
on $\mathbb{R}^{d}$, respectively.

\begin{proof} See Supplementary Material Appendix~\ref{app:InfiniteDivisible}.
\end{proof}

\end{theorem}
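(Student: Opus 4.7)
The plan is to exploit the Lévy–Khintchine structure of the predictive distribution and reduce everything to Fourier analysis on $\mathbb{R}^d$. Because $\xi=\xi'-\xi''$ is a symmetric difference of i.i.d.\ copies, the density $\hat p(y|x)$ depends only on $y-x$, so the Markov transition induced by $\hat p$ is a symmetric convolution semigroup, i.e., a symmetric Lévy process on $\mathbb{R}^d$. First I would identify the characteristic exponent: symmetrization kills the imaginary drift and half of the jump term, leaving the real, nonnegative, even function
\[
\psi(z)=\tfrac12\langle z,Az\rangle+\int_{\mathbb{R}^d}\bigl(1-\cos\langle z,y\rangle\bigr)\,\nu(dy),
\]
so that the time-$t$ transition density has characteristic function $e^{-t\psi(z)}$. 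Lebesgue measure $\mathsf m$ is the (unique, up to constants) invariant measure for this convolution semigroup.

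Next I would compute the generator and symmetric form via Plancherel. Writing $T_t f(x)=(p_t\ast f)(x)$, the Fourier image is multiplication by $e^{-t\psi(z)}$: $\widehat{T_tf}(z)=e^{-t\psi(z)}\hat f(z)$. Hence on its $L^2$-domain the generator $\mathcal A$ acts as Fourier multiplication by $-\psi$, $\widehat{\mathcal Af}(z)=-\psi(z)\hat f(z)$. The associated symmetric bilinear form is obtained from the standard identity
\[
\mathcal E(f,g)=\lim_{t\downarrow 0}\tfrac{1}{t}\langle f-T_tf,\,g\rangle_{\mathsf m},
\]
which by Parseval and monotone/dominated convergence (using $\psi\ge 0$ and $(1-e^{-t\psi})/t\uparrow\psi$) equals $\int_{\mathbb{R}^d}\psi(z)\hat f(z)\overline{\hat g(z)}\,dz$. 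For real $f,g$, evenness of $\psi$ and the Hermitian symmetry of the Fourier transform allow this to be written in the form stated in the theorem.

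Finally I would identify the domain $\mathcal F$ as the Beurling–Deny closure. The Lévy–Khintchine decomposition $\psi=\psi_{\mathrm{Gauss}}+\psi_{\mathrm{jump}}$ writes $\mathcal E$ as the sum of a diffusion form and a pure-jump form, both of which are well-known regular symmetric Dirichlet forms on $L^2(\mathbb{R}^d;\mathsf m)$ (see Fukushima–Oshima–Takeda, Example 1.4.1 and \S 1.4). In particular the pre-form defined on the Schwartz class is closable, and its closure has domain exactly
\[
\mathcal F=\Bigl\{f\in L^2(\mathbb{R}^d;\mathsf m):\int_{\mathbb{R}^d}|\hat f(z)|^2\psi(z)\,dz<\infty\Bigr\},
\]
with $\mathcal E$ extended by the same Fourier formula. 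The Markovian (unit-contraction) property follows from the Beurling–Deny criterion applied separately to $\psi_{\mathrm{Gauss}}$ and to the Lévy jump measure $\nu$.

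The main obstacle I expect is not the formal Fourier calculation but the rigorous identification of the closure: one must verify (i) that the symmetric form defined on a core (e.g., $\mathcal S(\mathbb{R}^d)$ or $\mathcal D(\mathcal A)$) is closable, (ii) that its closure is exactly the weighted Sobolev-type space above and not a strict subspace, and (iii) that the resulting form is Markovian. Each of these is standard for symmetric Lévy processes, and the cleanest route is to quote the general theorem on Dirichlet forms of symmetric convolution semigroups and verify that our $\psi$ satisfies its hypotheses; the verification of (iii) reduces to the nonnegativity and evenness of $\psi$ together with the Lévy–Khintchine structure.
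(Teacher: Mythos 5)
Your proposal is correct and follows essentially the same route as the paper's own proof: both treat $\hat p(y|x)$ as a symmetric convolution semigroup, use Parseval to get $\mathcal{E}^{(t)}(f,f)=\int|\hat f(z)|^2\frac{1-e^{-t\psi(z)}}{t}\,dz$, and pass to the monotone limit $t\downarrow 0$. The only difference is that you additionally address closability, the exact identification of the domain $\mathcal{F}$, and the Markovian property via Beurling--Deny, points the paper's appendix takes for granted after the formal limit; this extra care is welcome but does not change the argument.
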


Now, the following theorem is known to test for recurrence/transience
of the Lévy process \citep{Sato_99}: \begin{theorem} When $\psi\left(z\right)=0$,
then $\mathfrak{Re}\left(\frac{1}{-\psi\left(z\right)}\right)=\infty$,
$\frac{1}{\left[\mathfrak{Re}\left(-\psi\left(z\right)\right)\right]}=\infty$.
We then set the bounded open neighborhood of $B$ regarding $0$.
If $\left\{ X_{t}\right\} $ is 
\[
\int_{B}\mathfrak{Re}\left(\frac{1}{-\psi\left(z\right)}\right)dz=\infty
\]
it is recurrent, and if 
\[
\int_{B}\mathfrak{Re}\left(\frac{1}{-\psi\left(z\right)}\right)dz<\infty
\]
it is transient. Note that $\mathfrak{Re}\left(\cdot\right)$ is the
real part of the complex number. \end{theorem}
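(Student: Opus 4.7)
The plan is to translate the time-domain criterion for recurrence and transience (finiteness versus divergence of the potential $Rg$ defined in Section~3) into a frequency-domain condition via the $\alpha$-resolvent. For $\alpha>0$, set $R^{\alpha}g(x)=\int_{0}^{\infty}e^{-\alpha s}T_{s}g(x)\,ds$; expanding $T_{s}g$ by Fourier transform and using $\widehat{T_{s}g}(z)=e^{-s\psi(z)}\hat{g}(z)$ (up to a sign depending on convention), Fubini yields
$$R^{\alpha}g(x)=\frac{1}{(2\pi)^{d}}\int_{\mathbb{R}^{d}}e^{i\langle z,x\rangle}\,\frac{\hat{g}(z)}{\alpha+\psi(z)}\,dz,$$
which is valid for any $g\in L^{1}\cap L^{2}$ with $\hat{g}\in L^{1}$, since $\mathfrak{Re}(\psi)\geq 0$ makes the denominator have positive real part.

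The next step is to select a symmetric, strictly positive reference function $g$ whose Fourier transform $\hat{g}$ is smooth, non-negative, compactly supported, and bounded above and below by positive constants on a small ball $B$ around the origin; such $g$ exists as a self-convolution of a symmetric integrable function with non-negative Fourier transform. Symmetry collapses the representation above to its real part, so that
$$R^{\alpha}g(0)=\frac{1}{(2\pi)^{d}}\int_{\mathrm{supp}(\hat{g})}\hat{g}(z)\,\mathfrak{Re}\!\left(\frac{1}{\alpha+\psi(z)}\right)dz.$$
Non-degeneracy of the distribution -- the assumption that $\psi$ vanishes only at the origin -- guarantees that outside $B$ the integrand remains uniformly bounded in $\alpha\in[0,1]$. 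On $B$ the integrand is non-negative and monotone in $\alpha$, so monotone convergence gives
$$Rg(0)=\lim_{\alpha\downarrow 0}R^{\alpha}g(0)=\frac{1}{(2\pi)^{d}}\int_{B}\hat{g}(z)\,\mathfrak{Re}\!\left(\frac{1}{\psi(z)}\right)dz+O(1).$$

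Since $\hat{g}$ is pinched between two positive constants on $B$, the right-hand side is finite if and only if $\int_{B}\mathfrak{Re}(1/(-\psi(z)))\,dz$ is finite, the overall sign being immaterial in the paper's convention. Extending the criterion from $x=0$ to almost every $x$ is a routine consequence of the spatial homogeneity of the Lévy process; the definition given in Section~3 then identifies the finite case with transience and the divergent case with recurrence, completing the equivalence.

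The principal obstacle is the rigorous Fourier inversion of $R^{\alpha}g$ as $\alpha\downarrow 0$: when recurrence holds, $1/(\alpha+\psi(z))$ is only marginally integrable near the origin, so pointwise Fourier inversion is delicate and must be performed via Parseval combined with truncations in both the frequency variable and the time integral, followed by a careful passage to the limit. A secondary technicality is the exclusion of zeros of $\psi$ away from the origin, which would signal a lattice-supported distribution and is ruled out by the standing non-degeneracy hypothesis. Both steps are executed in detail in \cite{Sato_99}; the content of the present theorem is essentially a reformulation of his Chung--Fuchs type criterion for Lévy processes, and the only novelty here is its articulation in the Dirichlet-form language established in Theorem~\ref{Thm:InfiniteDivisible}.
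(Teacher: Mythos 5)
The paper offers no proof of this statement: it is quoted verbatim as a known result with a citation to \cite{Sato_99} (it is the Chung--Fuchs/Port--Stone type criterion, Theorem~37.5 in Sato's book), so there is no internal argument to compare yours against. Your sketch is a faithful outline of the standard proof that Sato gives --- resolvent representation $R^{\alpha}g$ via Fourier inversion, a well-chosen reference function with compactly supported nonnegative Fourier transform, passage $\alpha\downarrow 0$, and the identification of finiteness of $Rg$ with transience --- and you correctly flag the two genuinely delicate points (the Fourier inversion when the integrand is only marginally integrable, and the non-vanishing of $\psi$ away from the origin). One caveat worth stating explicitly: your monotone-convergence step is valid only when $\psi$ is real, since for $\psi=a+ib$ one has $\mathfrak{Re}\bigl(1/(\alpha+\psi)\bigr)=(\alpha+a)/\bigl((\alpha+a)^{2}+b^{2}\bigr)$, which is not monotone in $\alpha$ unless $b=0$; the interchange of $\lim_{\alpha\downarrow0}$ with the integral in the general (non-symmetric) case is precisely the hard content of the Port--Stone refinement of Chung--Fuchs. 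In the paper's setting the noise $\xi$ is symmetrized, so $\psi$ is real and nonnegative and your argument goes through, but as a proof of the theorem as stated (with $\mathfrak{Re}$ appearing essentially) the monotonicity claim needs to be replaced by the domination argument carried out in \cite{Sato_99}.
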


Therefore, regarding the construction of the predictive distribution
of the location, $\theta$, of the symmetric infinitely divisible
distribution, its in/admissibility can be determined by the integral
value of the logarithm of the characteristic function, $\psi\left(z\right)$,
around the origin, $0\in\mathbb{R}^{d}$. From this condition, it
is known that the Lévy process of $d\geqq3$ is transient, and as
a statistical implication, we have the following result:

\begin{theorem} If $d\geqq3$, then the MLE regarding the location,
$\theta$, plug-in predictive distribution of the symmetric infinitely
divisible distribution on $\mathbb{R}^{d}$ is always inadmissible
under $\mathsf{KL}$ loss. \end{theorem}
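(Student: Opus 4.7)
The plan is to chain together three ingredients already developed in the paper. First, because $\xi=\xi'-\xi''$ is symmetric infinitely divisible, the MLE plug-in predictive density $\hat{p}(y|x)$ is the fixed-time transition kernel of a symmetric L\'evy process $\{X_t\}$ on $\mathbb{R}^d$ whose characteristic exponent reduces (after symmetrization) to $\psi(z)=\tfrac{1}{2}\langle z,Az\rangle+\int_{\mathbb{R}^d}(1-\cos\langle z,y\rangle)\,\nu(dy)\geq 0$; this is exactly the identification developed in Section~\ref{subsec:Markov-Bayes}, and Theorem~\ref{Thm:InfiniteDivisible} then gives the associated Dirichlet form via Parseval as $\mathcal{E}(f,g)=\int_{\mathbb{R}^d}\hat{f}(z)\hat{g}(z)\psi(z)\,dz$ on the natural Sobolev-type domain $\mathcal{F}$. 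By Theorem~\ref{prop:BayesRisk}, inadmissibility of $\hat{p}^{\pi_U}$ under KL loss is therefore equivalent to the Dirichlet form $\mathcal{E}(\sqrt{M^\pi},\sqrt{M^\pi})$ being bounded away from zero along every sequence of proper priors, which by the second bullet of Section~\ref{sec:2.2} is in turn equivalent to transience of $\{X_t\}$.

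Second, I would invoke the criterion stated just before the theorem: $\{X_t\}$ is transient if and only if $\int_B\mathfrak{Re}(1/(-\psi(z)))\,dz<\infty$ on a bounded neighborhood $B$ of the origin. For $d\geq 3$ this is the classical Chung--Fuchs / Port--Stone fact, recorded in \cite{Sato_99}, that every non-degenerate symmetric L\'evy process on $\mathbb{R}^d$ is transient. The underlying estimate is two-sided control of $\psi$ near $z=0$: the upper bound $\psi(z)=O(|z|^2)$ follows from $1-\cos t\leq t^2/2$ together with $\int(|y|^2\wedge 1)\,\nu(dy)<\infty$, while non-degeneracy produces a matching lower bound $\psi(z)\gtrsim|z|^\alpha$ for some effective exponent $\alpha\leq 2$; integrating in polar coordinates then gives $\int_B 1/\psi(z)\,dz\lesssim\int_0^1 r^{d-1-\alpha}\,dr$, which converges whenever $d>\alpha$ and is therefore automatic for $d\geq 3$.

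Third, with transience in hand, I would apply the transience characterization of Section~\ref{sec:2.2} exactly as the authors do in the proof of Corollary~\ref{cor:Stable}. Namely, for any proper prior $\pi$, take the reference function $g(x)=\sqrt{M^\pi(x)}/\max\{\int_0^\infty T_s\sqrt{M^\pi}(x)\,ds,1\}$, which is well defined because transience makes the potential $\int_0^\infty T_s\sqrt{M^\pi}\,ds$ finite $\mathsf{m}$-a.e. Combining the inequality built into this $g$ with Eq.~\eqref{eq:BRD=00003D00003D00003DDirichlet} yields $B_{\mathsf{KL}}(\pi,\hat{p}^{\pi_U})-B_{\mathsf{KL}}(\pi,\hat{p}^\pi)=\mathcal{E}(\sqrt{M^\pi},\sqrt{M^\pi})\geq\int g(x)\sqrt{M^\pi(x)}\,dx>0$ for every proper $\pi$, replicating the lower bound used by the authors. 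Blyth's method \cite{Brown-EdGeorge-Xu_08} then rules out admissibility of $\hat{p}^{\pi_U}$, and the conclusion transfers to the MLE plug-in density, since the latter is dominated by $\hat{p}^{\pi_U}$ \cite{Aitchison_75}.

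The main obstacle I anticipate is cleanly isolating the non-degeneracy hypothesis under which the transience integral is actually finite in $d\geq 3$: if $A$ and $\nu$ are both supported on a proper linear subspace, $\psi$ vanishes identically on the orthogonal complement, $\int_B 1/\psi$ diverges, and the process reduces to a genuinely lower-dimensional L\'evy process that may be recurrent. The theorem must therefore be read with the implicit assumption $\psi(z)>0$ for $z\neq 0$, i.e.\ the L\'evy process is truly $d$-dimensional. Granting this, the delicate analytic step --- the two-sided control of $\psi$ near the origin needed to verify $\int_B 1/\psi\,dz<\infty$ --- is standard and available in \cite{Sato_99}, so the substantive work is really carried by the machinery this paper has already built around Theorem~\ref{prop:BayesRisk} and Theorem~\ref{Thm:InfiniteDivisible}.
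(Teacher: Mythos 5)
Your proposal is correct and follows essentially the same route the paper takes: identify the plug-in predictive density with the fixed-time kernel of a symmetric L\'evy process, read off the Dirichlet form $\mathcal{E}(f,g)=\int\hat{f}\hat{g}\,\psi\,dz$ from Theorem~\ref{Thm:InfiniteDivisible}, invoke the Chung--Fuchs/Sato criterion $\int_B\mathfrak{Re}(1/(-\psi))\,dz<\infty$ to get transience for $d\geq 3$, and then run the reference-function lower bound from Eq.~\eqref{eq:transiDiri} through Theorem~\ref{prop:BayesRisk} and Blyth's method, exactly as in the proof of Corollary~\ref{cor:Stable}. Your observation that the statement implicitly requires non-degeneracy (i.e.\ $\psi(z)>0$ for $z\neq 0$, so the process is genuinely $d$-dimensional) is a legitimate caveat that the paper leaves unstated, and your polar-coordinate estimate filling in why the transience integral converges is a sound elaboration of what the paper simply cites to \cite{Sato_99}.
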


\subsection{Domination of $\hat{p}^{\pi}$ over $\hat{p}^{\pi_{U}}$}

In this paper, we investigated the Bayes risk difference, between the Bayesian predictive density with uniform prior, $\hat{p}^{\pi_{U}}\left(y\left|x\right.\right)$, and the Bayesian predictive density with proper prior, $\hat{p}^{\pi}\left(y\left|x\right.\right)$, $B_{\mathsf{KL}}\left(\pi,\hat{p}^{\pi_{U}}\right)-B_{\mathsf{KL}}\left(\pi,\hat{p}^{\pi}\right)$, and found that the difference cannot be zero depending on the dimension, $p$, of the parameter, $\theta\left(\in\mathbb{R}^{p}\right)$.
Since $\hat{p}^{\pi}\left(y\left|x\right.\right)$ is an admissible predictive distribution, from Blyth's method, whether the Bayes risk difference is zero or not determines whether it is admissible or inadmissible.
From both Corollary~\ref{cor:Cauchy} and Corollary~\ref{cor:Stable}, when $p>1$ and $p\geqq1$, respectively, $\hat{p}^{\pi_{U}}\left(y\left|x\right.\right)$ is inadmissible, meaning that there exists a predictive distribution that dominates $\hat{p}^{\pi_{U}}\left(y\left|x\right.\right)$.
For a predictive density, $\hat{p}_{1}$, to dominate $\hat{p}_{2}$, is to say that $R_{\mathsf{KL}}\left(\theta,\hat{p}_{1}\right)\leqq R_{\mathsf{KL}}\left(\theta,\hat{p}_{2}\right)$ for all $\theta$ and with strict for some $\theta$.
Thus, to construct a predictive distribution that dominates $\hat{p}^{\pi_{U}}\left(y\left|x\right.\right)$, we have to evaluate the risk itself, rather than the Bayes risk.
This paper only evaluated the Bayes risk, and has not considered risk that does not integrate over the prior.

One direction of future research is to construct an estimator that dominates $\hat{p}^{\pi_{U}}\left(y\left|x\right.\right)$.
The (not Bayes) risk difference under KL loss is 
\begin{alignat*}{1}
 & R_{\mathsf{KL}}\left(\theta,\hat{p}^{\pi_{U}}\right)-R_{\mathsf{KL}}\left(\theta,\hat{p}^{\pi}\right)\\
= & \int_{\mathbb{R}^{p}}\left\{ \int_{\mathbb{R}^{p}}\log\frac{\hat{p}^{\pi}\left(y\left|x\right.\right)}{\hat{p}^{\pi_{U}}\left(y\left|x\right.\right)}p_{c}\left(y\left|\theta\right.\right)dy\right\} p_{c}\left(x\left|\theta\right.\right)dx.
\end{alignat*}
Let the scale parameter to be known and $c=c_{x}=c_{y}$ and $p_{c}\left(y\left|\theta\right.\right)=p_{c}\left(x\left|\theta\right.\right)$.
From \cite{EdGeorge-Liang-Xu_06} Lemma 2., one approach is to analyze the density ratio, $\frac{\hat{p}^{\pi}\left(y\left|x\right.\right)}{\hat{p}^{\pi_{U}}\left(y\left|x\right.\right)}$.
This is because when you have a normal model, $N\left(\theta,cI\right)$, we have $\frac{\hat{p}^{\pi}\left(y\left|x\right.\right)}{\hat{p}^{\pi_{U}}\left(y\left|x\right.\right)}=\frac{m^{\pi}\left(x;\frac{1}{2}c\right)}{m^{\pi}\left(x;c\right)}$.
However, deriving such an analytical expression is difficult, unless under a normal model.

One viable method is to consider, as we do in this paper, two cylindrical
measures, $\mathbb{P}_{x}$ and $\mathbb{Q}_{x}$, of continuous-time
Markov processes $\left\{ X_{t};X_{0}=x\right\} _{t\in\left[0,2c\right]}$
obeying transition probabilities, $\hat{p}^{\pi_{U}}\left(y\left|x\right.\right)$
and $\hat{p}^{\pi}\left(y\left|x\right.\right)$, respectively, and
define the density ratio process, $\frac{d\mathbb{Q}_{x}}{d\mathbb{P}_{x}}$.
The cylindrical measures, $\mathbb{P}_{x}$ and $\mathbb{Q}_{x}$, are the joint probability measure of the continuous
time Markov process, $\left\{ X_{t}\right\} _{t\in\left[0,2c\right]}$, and if we let $0<t_{1}<\cdots<t_{n}<2c$ be the arbitrary finite partition of $\left[0,2c\right]$, the
density ratio process, $\frac{d\mathbb{Q}_{x}}{d\mathbb{P}_{x}}$, is
\[
\frac{d\mathbb{Q}\left(X_{2c},X_{t_{n}},\cdots,X_{t_{1}},X_{0}=x\right)}{d\mathbb{P}\left(X_{2c},X_{t_{n}},\cdots,X_{t_{1}},X_{0}=x\right)}.
\]
The density ratio, $\frac{\hat{p}^{\pi}\left(y\left|x\right.\right)}{\hat{p}^{\pi_{U}}\left(y\left|x\right.\right)}$,
can be interpreted as 
\[
\frac{\hat{p}^{\pi}\left(y\left|x\right.\right)}{\hat{p}^{\pi_{U}}\left(y\left|x\right.\right)}=\frac{\mathbb{Q}\left(X_{2c}=y,X_{t_{n}}\in\mathbb{R}^{p},\cdots,X_{t_{1}}\in\mathbb{R}^{p},X_{0}=x\right)}{\mathbb{P}\left(X_{2c}=y,X_{t_{n}}\in\mathbb{R}^{p},\cdots,X_{t_{1}}\in\mathbb{R}^{p},X_{0}=x\right)}.
\]
Here, $\frac{d\mathbb{Q}_{x}}{d\mathbb{P}_{x}}$ is a change of measure
for the probability measure, $\mathbb{P}_{x}$ to $\mathbb{Q}_{x}$,
and a transformation of drift from a Markov process $\left\{ X_{t}\right\} $
following $\mathbb{P}_{x}$, to a Markov process following $\mathbb{Q}_{x}$.
If we use martingale theory, it may be possible to obtain an analytical
expression for $\frac{d\mathbb{Q}_{x}}{d\mathbb{P}_{x}}$, similar
to Girsanov's formula. This will be left for future work.

\bibliographystyle{imsart-nameyear}
\bibliography{Matome_2022-08}

\newpage{}

\appendix

{\centerline{\textbf{\Large{}{}{}{}{}Supplementary Material
for Inadmissibility and Transience}{\Large{}{}{}{}{}}} 


\section{Preliminaries}

In this section, we will explain Markov processes in continuous-time,
Dirichlet form, transition probability function, cylindrical measure
(measure of continuous-time stochastic processes), and how to discern
the recurrence and transience of Markov processes through Dirichlet
form. Then, we will explain the location estimation problem and its
correspondence to the Bayes predictive distribution.

\subsection{Markov process and Dirichlet form}

Let $\omega$ be a right continuous left limits path in the path space,
$\mathbb{W}$, which takes values on the state space, $\mathbb{R}^{d}$,
and defined by $T$. $T$ is either the interval, $\left[0,\infty\right)$,
or $\left\{ 0,1,2,\cdots\right\} $. Define the projection, $k_{t}$,
$\theta_{t}$, for $\mathbb{W}\mapsto\mathbb{W}$, as 
\begin{alignat*}{1}
\theta_{t}\left(\omega\right)\left(s\right) & =\omega\left(s+t\right)\\
k_{t}\left(\omega\right)\left(s\right) & =\omega\left(t\wedge s\right).
\end{alignat*}
$\mathbb{W}$ is closed under the projection, $k_{t},\theta_{t}$,
and $\mathscr{F}=\mathscr{B}\left(\mathbb{W}\right)$ is the $\sigma$-algebra
generated from the set 
\[
\mathscr{F}=\left\{ \left.\omega\in\mathbb{W}\right|\omega\left(t\right)\in A\right\} \ \left(t>0,^{\forall}A\in\mathscr{B}\left(\mathbb{R}^{d}\right)\right).
\]
Henceforth, $\omega\left(t\right)$ is denoted as $X_{t}\left(\omega\right)$.
Thus, for each, $t\in T$, $X_{t}$ is the measurable projection from
the measurable space, $\left(\mathbb{W},\mathscr{F}\right)$, to the
measurable space, $\left(\mathbb{R}^{d},\mathscr{B}\left(\mathbb{R}^{d}\right)\right)$;
$X_{t}\in\mathscr{F}\left/\mathscr{B}\left(\mathbb{R}^{d}\right)\right.$.
Further, the filtration, $\mathscr{F}_{t}$, is a partial $\sigma$-algebra
of $\mathscr{F}$, written as 
\[
\mathscr{F}_{t}=\left\{ \left.\omega\in\mathbb{W}\right|k_{t}\left(\omega\right)\in B\right\} \ \left(^{\forall}B\in\mathscr{F}\right).
\]
This is a set of all paths up until $t$ in $\mathscr{F}$.

Consider a family of cylindrical measure, $\left\{ \mathbb{P}_{x}\right\} _{x\in\mathbb{R}^{d}}$,
on $\left(\mathbb{W},\mathscr{F}\right)$, with initial value, $x$,
$\mathbb{P}_{x}\left(X_{0}\left(\omega\right)=x\right)=1$, $\mathbb{P}_{x}\left(B\right)$
is $\mathscr{B}\left(\mathbb{R}^{d}\right)$-measurable for all $x\in\mathbb{R}^{d}$,
and satisfies the Markov property: i.e., for $\mathbb{P}_{x}$, measure
$1$, 
\begin{alignat*}{1}
\mathbb{P}_{x}\left(\left.\theta_{t}\left(\omega\right)\in B\right|\mathscr{F}_{t}\right) & =\mathbb{P}_{X_{t}\left(\omega\right)}\left(B\right),\ B\in\mathscr{F}.
\end{alignat*}
If we take any point, $0<s_{1}<s_{2}<\cdots<s_{n}$, within $T$,
for each $x$, 
\begin{alignat*}{1}
\mathbb{P}_{x}\left(\left.X_{s_{n}+t}\in A\right|X_{s_{1}},\cdots,X_{s_{n}}\right) & =\mathbb{P}_{x}\left(\left.X_{t}\left(\theta_{s_{n}}\right)\in A\right|X_{s_{n}}\right)=\mathbb{P}_{X_{s_{n}}}\left(X_{t}\in A\right)
\end{alignat*}
holds with $\mathbb{P}_{x}$ measure $1$. This shows that $\left\{ X_{t}\right\} _{t\geq0}$
is a Markov process on $\mathbb{R}^{d}$ with an initial degenerate
distribution, $\delta_{x}$, for each $x$, and transition probability,
\[
p_{t}\left(\left.A\right|x\right)=\mathbb{P}_{x}\left(X_{t}\in A\right).
\]

Note that $\mathbb{P}_{x}\left(X_{t}\in A\right)$ is a cylindrical
measure, which is to say, for any point, $0<s_{1}<s_{2}<\cdots<s_{n}<t$
in $T$, we have 
\[
\mathbb{P}_{x}\left(X_{s_{1}}\in\mathbb{R}^{d},X_{s_{2}}\in\mathbb{R}^{d},\cdots,X_{s_{n}}\in\mathbb{R}^{d},X_{t}\in A\right).
\]

The semigroup of the positive linear operator, $\left\{ T_{t}\left|t\geqq0\right.\right\} $,
on the space of all bounded measurable functions, $B\left(\mathbb{R}^{d}\right)$,
on $\mathbb{R}^{d}$, is derived by 
\[
T_{t}f\left(x\right)=\int_{\mathbb{R}^{d}}f\left(y\right)p_{t}\left(\left.dy\right|x\right),\ f\in B\left(\mathbb{R}^{d}\right)
\]
from the transition probability, $p_{t}\left(\left.A\right|x\right)$.
Here, $\left\{ T_{t}\right\} $ satisfies $T_{s+t}=T_{s}T_{t}$ and
$\left(s\geqq0\right)$ on $B\left(\mathbb{R}^{d}\right)$. If we
represent $\mathbb{E}_{x}^{\mathbb{P}}\left[\cdot\right]$ with regard
to the expectation of the cylindrical measure, $\mathbb{P}_{x}$,
we have 
\begin{alignat*}{1}
T_{t}f\left(x\right) & =\mathbb{E}_{x}^{\mathbb{P}}\left[f\left(X_{t}\right)\right],
\end{alignat*}
from which $T_{s+t}=T_{s}T_{t}$ follows. Let $\mathsf{m}$ be a positive
Lebesgue measure on $\mathbb{R}^{d}$. For a Markov process, when
the corresponding semigroup, $\left\{ T_{t}\right\} $, satisfies
\[
\int_{S}T_{t}f\left(x\right)g\left(x\right)\mathsf{m}\left(dx\right)=\int_{S}f\left(x\right)T_{t}g\left(x\right)\mathsf{m}\left(dx\right)
\]
for any $t>0$ and any non-negative measurable function, $f,g$, it
is said to be $\mathsf{m}$-symmetric. A symmetric Markov process
is also called a reversible Markov process. Here, $\left\{ T_{t}\right\} $
is uniquely realized as a strongly continuous contraction semigroup
on the real $L^{2}\left(\mathbb{R}^{d};\mathsf{m}\right)$ space.
$L^{2}\left(\mathbb{R}^{d};\mathsf{m}\right)$ is the entire square
integrable function regarding the Lebesgue measure, $\mathsf{m}$,
on $\mathbb{R}^{d}$. Let $\mathcal{A}$ be the generator of $\left\{ T_{t}\right\} $,
then $\mathcal{A}$ is a negative semidefinite self-adjoint operator,
which defines the symmetric form, $\left(\mathcal{E},\mathcal{F}\right)$,
on $L^{2}\left(\mathbb{R}^{d};\mathsf{m}\right)$: 
\[
\mathcal{E}\left(f,g\right)=\left\langle \sqrt{-\mathcal{A}}f,\sqrt{-\mathcal{A}}g\right\rangle _{\mathsf{m}},\ \mathcal{F}=\mathcal{D}\left(\sqrt{-\mathcal{A}}\right).
\]
$\left\langle ,\right\rangle $ is the internal product of $L^{2}\left(\mathbb{R}^{d};\mathsf{m}\right)$,
and $\mathcal{D}\left(\mathcal{A}\right)$ is the functional space
that defines $\mathcal{A}$. $\left(\mathcal{E},\mathcal{F}\right)$
is called the Dirichlet form of the $\mathsf{m}$-symmetric process.
\begin{example} \textit{A $d$-dimensional Brownian motion is symmetric
with regard to the Lebesgue measure, and the Dirichlet form is given
as 
\[
\mathcal{E}\left(f,g\right)=\frac{1}{2}\int_{\mathbb{R}^{N}}\nabla f\left(x\right)\cdot\nabla g\left(x\right)\mathsf{m}\left(dx\right),\ \mathcal{F}=\left\{ f\in L^{2}\left(\mathbb{R}^{d};\mathsf{m}\right)\left|\frac{\partial f}{\partial x_{i}}\in L^{2}\left(\mathbb{R}^{d};\mathsf{m}\right),1\leqq i\leqq d\right.\right\} ,
\]
though all differential are with regard to generalized functions.
$\mathcal{F}$ is a Soblev space. This Dirichlet form was what was
used in \cite{Brown_71}.} \end{example} \begin{example} \textit{The
Dirichlet form of the Cauchy (stable) process on $\mathbb{R}^{d}$
can be specifically shown using the characteristic function of a symmetric
convolutional semigroup. First, define the Fourier transformation
of the integrable function, $f$, on $\mathbb{R}^{d}$ as 
\[
\hat{f}\left(z\right)=\frac{1}{\left(2\pi\right)^{d/2}}\int_{\mathbb{R}^{d}}e^{i\left\langle z,y\right\rangle }f\left(y\right)dy,\ z\in\mathbb{R}^{d}.
\]
Denote the probability measure of the Cauchy (stable) process on $\mathbb{R}^{d}$
as $\mu$ and the probability measure of the Cauchy (stable) process
at time, $t$, as $\mu_{t}$, then $\mu_{t}$ can be represented as
the $t$-times convolution of $\mu$; $\mu^{t*}$. Then the characteristic
function can be written as $\hat{\mu}_{t}=\hat{\mu}^{t}$. The transition
probability is defined as 
\[
p_{t}\left(\left.B\right|x\right)=\mu^{t*}\left(B-x\right)
\]
and the semigroup, $T_{t}f$, is 
\[
T_{t}f\left(x\right)=\int_{\mathbb{R}^{d}}f\left(x+y\right)\mu_{t}\left(dy\right),\ t>0,x\in\mathbb{R}^{d},f\in L^{2}\left(\mathbb{R}^{d};\mathsf{m}\right),
\]
which is a convolution semigroup. Given the characteristic function,
$\hat{\mu}_{t}$, denote the logarithm, $-\psi$, as $\hat{\mu}_{t}\left(z\right)=e^{-t\psi\left(z\right)}$.
Using Perseval's theorem, 
\[
\left\langle f,g\right\rangle _{\mathsf{m}}=\left\langle \hat{f},\overline{\hat{g}}\right\rangle _{\mathsf{m}},\ f,g\in L^{2}\left(\mathbb{R}^{d};\mathsf{m}\right),
\]
we have $\widehat{T_{t}f}=\hat{\mu}_{t}\cdot\hat{f}$, therefore 
\begin{alignat*}{1}
\mathcal{E}^{\left(t\right)}\left(f,f\right)= & \frac{1}{t}\left\langle f-T_{t}f,f\right\rangle _{\mathsf{m}}\\
= & \frac{1}{t}\int_{\mathbb{R}^{d}}\left(\hat{f}\left(z\right)-\hat{\mu}_{t}\left(z\right)\hat{f}\left(z\right)\right)\bar{\hat{f}}\left(z\right)dz\\
= & \int_{\mathbb{R}^{d}}\left|\hat{f}\left(z\right)\right|^{2}\frac{1-\exp\left(-t\psi\left(z\right)\right)}{t}dz.
\end{alignat*}
When $t\downarrow0$, the last integral is monotonically increasing
and converges to $\int_{\mathbb{R}^{d}}\left|\hat{f}\left(z\right)\right|^{2}\psi\left(z\right)dz$.
Thus, 
\begin{align*}
 & \mathcal{F}=\left\{ \left.f\in L^{2}\left(\mathbb{R}^{d};\mathsf{m}\right)\right|\int_{\mathbb{R}^{d}}\left|\hat{f}\left(z\right)\right|^{2}\psi\left(z\right)dz<\infty\right\} ,\\
 & \mathcal{E}\left(f,g\right)=\int_{\mathbb{R}^{d}}\hat{f}\left(z\right)\hat{g}\left(z\right)\psi\left(z\right)dz,\quad f,g\in\mathcal{F}.
\end{align*}
is the Dirichlet form of the Cauchy process.}

\textit{The characteristic function of the predictive distribution
of a $d$-dimensional Cauchy distribution, $\hat{p}_{t}^{\pi_{U}}\left(y\left|x\right.\right)$,
is 
\[
\hat{\mu}\left(z\right)=\exp\left(-2c\left\Vert z\right\Vert +i\left\langle x,z\right\rangle \right)
\]
and the characteristic function of the predictive distribution of
a stable function with $d=1$, $0<\alpha<2$, exponential $\alpha$,
$\hat{p}_{t}^{\pi_{U}}\left(y\left|x\right.\right)$, is 
\[
\hat{\mu}\left(z\right)=\exp\left(-2c\left|z\right|^{\alpha}+ixz\right).
\]
Thus, substituting $\psi$ to $\mathcal{E}\left(f,g\right)$ for each
characteristic function gives us the Dirichlet form.} \end{example}

\subsection{The recurrence and transience of Markov processes and its distinction
using the Dirichlet form}

In this section we introduce the notion of transience and recurrence
of the Markovian semigroup. The recurrence and transience of a Markovian
semigroup can be characterized in terms of its associated Dirichlet
form and extended Dirichlet space.

A $\sigma$-finite measure $\mathsf{m}$ on $\mathbb{R}^{d}$ is called
an invariant measure if 
\begin{alignat*}{1}
\int_{\mathbb{R}^{d}}p_{t}\left(\left.A\right|x\right)\mathsf{m}\left(dx\right) & =\mathsf{m}\left(A\right),\quad\forall t>0,\quad{\forall}A\in\mathscr{B}\left(\mathbb{R}^{d}\right)
\end{alignat*}
holds. A spatially homogeneous Cauchy process has its invariant measure
as a Lebesgue measure. Define the potential operator as 
\begin{align*}
Rf\left(x\right) & =\int_{0}^{\infty}T_{s}f\left(x\right)ds.
\end{align*}
It holds that $Rf\left(x\right)=\lim_{\alpha\rightarrow0}G_{\alpha}f\left(x\right)$.
\begin{definition} \textit{The Markovian semigroup $\left\{ T_{t}\right\} _{t\geq0}$
is transient if there exists a positive integrable function $g\in L_{+}^{1}\left(\mathbb{R}^{d};\mathsf{m}\right)$,
satisfying $\mathsf{m}\left(x\left|g\left(x\right)=0\right.\right)=0$,
such that 
\[
Rg<\infty,\ \mathsf{m}\textrm{-a.e.}
\]
The Markovian semigroup $\left\{ T_{t}\right\} _{t\geq0}$ is recurrent
if there exists a positive integrable function $f\in L_{+}^{1}\left(\mathbb{R}^{d};\mathsf{m}\right)$,
such that 
\begin{alignat*}{1}
Rg & =\infty\,\textrm{or}\ 0,\ \mathsf{m}\textrm{-a.e.}
\end{alignat*}
} \end{definition}

Next, we consider the characterization of recurrence and transience
using the Dirichlet form. First, for transience, the necessary and
sufficient condition is for the Dirichlet form, $\mathcal{E}$, to
be bounded away from 0.

\begin{theorem} \label{thm:Dirichlet-Tran}(Transience of Dirichlet
form). Let $\left\{ T_{t}\right\} _{t>0}$ be a strongly continuous
Markovian semigroup on $L^{2}\left(\mathbb{R}^{d};\mathsf{m}\right)$
and $\left(\mathcal{E},\mathcal{F}\right)$ be the associated Dirichlet
space relative to $L^{2}\left(\mathbb{R}^{d};\mathsf{m}\right)$.
Then, $\left\{ T_{t}\right\} _{t>0}$ is transient if and only if
there exists a bounded, $\mathsf{m}$-integrable, strictly positive
function, $g$, such that $g>0,\ \mathsf{m}\textrm{-a.e.}$ on $\mathbb{R}^{d}$
satisfying 
\begin{alignat*}{2}
\int_{\mathbb{R}^{d}}\left|f\right|gd\mathsf{m}\leqq & \mathcal{E}\left\langle f,f\right\rangle , & ^{\forall}f\in\mathcal{F}.
\end{alignat*}
\end{theorem}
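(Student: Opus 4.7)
The plan is to establish the equivalence via two directions, both of which hinge on the resolvent $G_\alpha=\int_0^\infty e^{-\alpha t}T_t\,dt$ and its limit, the potential operator $R=\lim_{\alpha\downarrow 0}G_\alpha$. The essential analytic identity is the resolvent characterization of the Dirichlet form,
\[
\mathcal{E}_\alpha(G_\alpha\phi,u)=\langle\phi,u\rangle_{\mathsf{m}},\qquad u\in\mathcal{F},\ \phi\in L^2(\mathbb{R}^d;\mathsf{m}),
\]
where $\mathcal{E}_\alpha=\mathcal{E}+\alpha\langle\cdot,\cdot\rangle_{\mathsf{m}}$; this is the single lever I use to transform statements about $R$ into statements about $\mathcal{E}$.

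For the necessary direction, I assume $\{T_t\}$ is transient and pick $g_0\in L_+^1$ with $g_0>0$ $\mathsf{m}$-a.e.\ and $Rg_0<\infty$ $\mathsf{m}$-a.e. After a truncation of the form $g=g_0/(1+Rg_0)$ (bounded, strictly positive, and $\mathsf{m}$-integrable), a direct computation shows $\int g\cdot Rg\,d\mathsf{m}<\infty$, so $Rg$ belongs to the extended Dirichlet space $\mathcal{F}_e$. Sending $\alpha\downarrow 0$ in the resolvent identity gives $\mathcal{E}(Rg,u)=\langle g,u\rangle_{\mathsf{m}}$ for all $u\in\mathcal{F}_e$. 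Since $|u|\in\mathcal{F}$ by the normal contraction property of Dirichlet forms, Cauchy--Schwarz yields
\[
\int_{\mathbb{R}^d}|u|g\,d\mathsf{m}=\mathcal{E}(|u|,Rg)\le \mathcal{E}(|u|,|u|)^{1/2}\Bigl(\textstyle\int g\cdot Rg\,d\mathsf{m}\Bigr)^{1/2}\le C^{1/2}\mathcal{E}(u,u)^{1/2},
\]
and a rescaling $g\mapsto C^{-1/2}g$ (or $g\mapsto C^{-1}g$, according to how the stated inequality is to be interpreted) produces the desired bound.

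For the sufficient direction, I assume the inequality holds for some bounded, integrable, strictly positive $g$. Take $f\ge 0$ bounded and integrable with compact support; then $G_\alpha f\in\mathcal{F}$ and applying the assumed inequality together with $\mathcal{E}(G_\alpha f,G_\alpha f)\le\langle f,G_\alpha f\rangle_{\mathsf{m}}$ gives
\[
\int_{\mathbb{R}^d}G_\alpha f\cdot g\,d\mathsf{m}\le \langle f,G_\alpha f\rangle_{\mathsf{m}}.
\]
By symmetry of $\{T_t\}$ and Fubini, the left side equals $\int f\cdot G_\alpha g\,d\mathsf{m}$. Monotone convergence as $\alpha\downarrow 0$ gives $\int f\cdot Rg\,d\mathsf{m}\le\int f\cdot Rf\,d\mathsf{m}$, and choosing $f$ so that $Rf$ is finite on a set of positive measure (e.g.\ take $f$ with support in a set where the right-hand side is known finite by the assumed inequality applied to truncations of $Rf$) and then running over an exhausting sequence of such $f$ yields $Rg<\infty$ $\mathsf{m}$-a.e., which is the definition of transience.

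The main obstacle is item (i): showing $Rg\in\mathcal{F}_e$ with $\mathcal{E}(Rg,Rg)=\int g\cdot Rg\,d\mathsf{m}$, because in the transient case $Rg$ typically fails to be in $\mathcal{F}$ itself. This requires constructing the extended Dirichlet space, verifying that the net $\{G_\alpha g\}_{\alpha>0}$ is $\mathcal{E}$-Cauchy as $\alpha\downarrow 0$ (via the spectral theorem on the self-adjoint generator $-\mathcal{A}$, writing both $G_\alpha g$ and $\mathcal{E}$ through the spectral resolution of $-\mathcal{A}$), and passing to the $\mathcal{E}$-limit. A secondary but nontrivial point is simultaneously arranging that the normalized $g$ is bounded, strictly positive, $\mathsf{m}$-integrable, and produces the constant $1$ in the stated inequality; this is a routine but careful truncation argument that must be done after, not before, verifying $\int g\cdot Rg\,d\mathsf{m}<\infty$.
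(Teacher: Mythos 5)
Your necessity direction is essentially sound and, in fact, reconstructs the argument that the paper only quotes: the theorem is taken from \cite{Fukushima-Oshima-Takeda_10}, and the surrounding text supplies only the reference-function construction $g=f/(Gf\vee 1)$ plus an unproved lemma identifying $\sup_{u\in\mathcal{F}}\langle|u|,g\rangle_{\mathsf{m}}/\sqrt{\mathcal{E}(u,u)}$ with $\sqrt{\int g\cdot Gg\,d\mathsf{m}}$. Your normalization $g=g_0/(1+Rg_0)$ plays the same role, the identity $\mathcal{E}(Rg,u)=\langle g,u\rangle_{\mathsf{m}}$ on the extended space is the right lever, and Cauchy--Schwarz together with $\mathcal{E}(|u|,|u|)\leqq\mathcal{E}(u,u)$ delivers exactly the bound in Eq.~\eqref{eq:transiDiri}; you also correctly isolate the real technical burden (showing $Rg\in\mathcal{F}_{e}$ via $\mathcal{E}$-Cauchyness of $\{G_{\alpha}g\}$). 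One caveat: the inequality as printed, with $\mathcal{E}(f,f)$ rather than $\sqrt{\mathcal{E}(f,f)}$ on the right, cannot hold for all $f\in\mathcal{F}$ (replace $f$ by $\lambda f$ and let $\lambda\downarrow0$), so no rescaling of $g$ reaches it; the intended right-hand side is $\sqrt{\mathcal{E}(f,f)}$, as Eq.~\eqref{eq:transiDiri} makes clear, and only your $g\mapsto C^{-1/2}g$ option is meaningful.

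The genuine gap is in the sufficiency direction. Testing the hypothesis on $G_{\alpha}f$ for a generic $f\geqq0$ yields $\int f\cdot Rg\,d\mathsf{m}\leqq\int f\cdot Rf\,d\mathsf{m}$, which is vacuous unless the right-hand side is finite; but in the recurrent (irreducible) case $Rf\in\{0,\infty\}$ $\mathsf{m}$-a.e.\ for every $f\in L_{+}^{1}$, so no $f$ with $Rf$ finite on a set of positive measure is available a priori --- producing one is precisely the assertion of transience, and your parenthetical fix is therefore circular. The repair is to test with $f=g$ itself: $g$ is bounded and integrable, hence in $L^{2}$, so $G_{\alpha}g\in\mathcal{F}$, and the hypothesis in its square-root form gives
\begin{equation*}
\langle g,G_{\alpha}g\rangle_{\mathsf{m}}\leqq\sqrt{\mathcal{E}\left(G_{\alpha}g,G_{\alpha}g\right)}\leqq\sqrt{\mathcal{E}_{\alpha}\left(G_{\alpha}g,G_{\alpha}g\right)}=\sqrt{\langle g,G_{\alpha}g\rangle_{\mathsf{m}}},
\end{equation*}
whence $\langle g,G_{\alpha}g\rangle_{\mathsf{m}}\leqq1$. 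Monotone convergence as $\alpha\downarrow0$ gives $\int g\cdot Rg\,d\mathsf{m}\leqq1<\infty$, and since $g>0$ $\mathsf{m}$-a.e.\ this forces $Rg<\infty$ $\mathsf{m}$-a.e., i.e.\ transience. With this replacement, and modulo the standard extended-Dirichlet-space facts you already flagged, the proof is complete.
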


Finding the function, $g$, can be done as follows. First, there exists
a $g\in L^{1}\left(\mathbb{R}^{d};\mathsf{m}\right)\cap L^{2}\left(\mathbb{R}^{d};\mathsf{m}\right)$
that satisfies the following lemma.

\begin{lemma} For any non-negative, integrable function, $g\in L^{1}\left(\mathbb{R}^{d};\mathsf{m}\right)\cap L^{2}\left(\mathbb{R}^{d};\mathsf{m}\right)$,
\[
\sup_{u\in\mathcal{F}}\frac{\left\langle \left|u\right|,g\right\rangle _{\mathsf{m}}}{\sqrt{\mathcal{E}\left(u,u\right)}}=\sqrt{\int_{\mathbb{R}^{d}}g\cdot Ggd\mathsf{m}},
\]
holds. \end{lemma}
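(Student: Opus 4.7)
The plan is to identify the supremum with an inner-product duality and then invoke Cauchy--Schwarz for the symmetric bilinear form $\mathcal{E}$. First, I would use the Markovian contraction property of $(\mathcal{E},\mathcal{F})$ to reduce to nonnegative test functions: if $u\in\mathcal{F}$, then $|u|\in\mathcal{F}$ with $\mathcal{E}(|u|,|u|)\le\mathcal{E}(u,u)$, while $\langle|u|,g\rangle_{\mathsf{m}}\ge\langle u,g\rangle_{\mathsf{m}}$ since $g\ge 0$. Hence the supremum is unchanged if we restrict to $u\in\mathcal{F}$ with $u\ge 0$, and the absolute value inside the inner product may be dropped.

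Next, I would introduce the potential $h:=Gg=\int_{0}^{\infty}T_{s}g\,ds$ of $g$. The key step is the identity $-\mathcal{A}h=g$ together with self-adjointness of $\mathcal{A}$, which yields, for every $u\in\mathcal{F}$,
\begin{equation*}
\langle u,g\rangle_{\mathsf{m}}=\langle u,-\mathcal{A}h\rangle_{\mathsf{m}}=\mathcal{E}(u,h).
\end{equation*}
Applying Cauchy--Schwarz to $\mathcal{E}$ then gives
\begin{equation*}
\langle u,g\rangle_{\mathsf{m}}\le\sqrt{\mathcal{E}(u,u)}\,\sqrt{\mathcal{E}(h,h)},
\end{equation*}
and $\mathcal{E}(h,h)=\langle h,-\mathcal{A}h\rangle_{\mathsf{m}}=\langle g,Gg\rangle_{\mathsf{m}}=\int_{\mathbb{R}^{d}}g\cdot Gg\,d\mathsf{m}$. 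This supplies the inequality $\le$ in the lemma, and taking $u=h$ realizes equality and gives the matching $\ge$.

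The main obstacle is justifying that the manipulations above are legal, since $h=Gg$ need not belong to $\mathcal{F}$ in the classical $L^{2}$ sense (the integral $\int_{0}^{\infty}T_{s}g\,ds$ may diverge when the semigroup is recurrent). I would circumvent this by working first with the $\alpha$-resolvent $h_{\alpha}:=G_{\alpha}g=\int_{0}^{\infty}e^{-\alpha s}T_{s}g\,ds\in\mathcal{D}(\mathcal{A})\subset\mathcal{F}$, for which $(\alpha-\mathcal{A})h_{\alpha}=g$ yields the cleaner identity
\begin{equation*}
\langle u,g\rangle_{\mathsf{m}}=\mathcal{E}(u,h_{\alpha})+\alpha\langle u,h_{\alpha}\rangle_{\mathsf{m}},
\end{equation*}
and hence the bound $\langle u,g\rangle_{\mathsf{m}}\le\sqrt{\mathcal{E}(u,u)+\alpha\|u\|_{\mathsf{m}}^{2}}\,\sqrt{\langle g,h_{\alpha}\rangle_{\mathsf{m}}}$ by Cauchy--Schwarz. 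Letting $\alpha\downarrow 0$, monotone convergence gives $\langle g,h_{\alpha}\rangle_{\mathsf{m}}\uparrow\int g\cdot Gg\,d\mathsf{m}$, and taking $u=h_{\alpha}$ along this sequence recovers equality; in the transient case this identifies the supremum with the finite value $\sqrt{\int g\cdot Gg\,d\mathsf{m}}$, while in the recurrent case both sides blow up to $+\infty$. Controlling this limit carefully within $\mathcal{F}$ (rather than an enlarged extended space) is the only delicate point.
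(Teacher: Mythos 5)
The paper does not actually prove this lemma: it is stated as a known fact from the theory of Dirichlet forms (it is a standard result in \S 1.5 of the cited Fukushima--Oshima--Takeda monograph, where it underlies the characterization of transience), so there is no in-paper argument to compare against. Your proposal reconstructs the standard proof correctly. The reduction to $u\geqq 0$ via the Markovian contraction $\mathcal{E}\left(\left|u\right|,\left|u\right|\right)\leqq\mathcal{E}\left(u,u\right)$ is the right first step, and replacing $Gg$ by the resolvent $h_{\alpha}=G_{\alpha}g\in\mathcal{D}\left(\mathcal{A}\right)\subset\mathcal{F}$ is exactly how one circumvents the possible divergence of $\int_{0}^{\infty}T_{s}g\,ds$; the upper bound then follows from Cauchy--Schwarz for $\mathcal{E}_{\alpha}$ together with monotone convergence of $\left\langle g,G_{\alpha}g\right\rangle _{\mathsf{m}}$ (note the two factors converge separately as $\alpha\downarrow0$, so the product passes to the limit). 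The one step you leave implicit is the attainment direction: for $u=h_{\alpha}$ one needs $\mathcal{E}\left(h_{\alpha},h_{\alpha}\right)=\left\langle g,h_{\alpha}\right\rangle _{\mathsf{m}}-\alpha\left\Vert h_{\alpha}\right\Vert _{\mathsf{m}}^{2}\leqq\left\langle g,h_{\alpha}\right\rangle _{\mathsf{m}}$, whence the ratio is at least $\sqrt{\left\langle g,h_{\alpha}\right\rangle _{\mathsf{m}}}$, which increases to the right-hand side as $\alpha\downarrow0$; this is a one-line computation, so the gap is cosmetic. Your closing worry about having to control the limit outside $\mathcal{F}$ is unfounded: the supremum is taken over $\mathcal{F}$ and every $G_{\alpha}g$ already lies in $\mathcal{F}$, so no enlargement is needed --- the extended Dirichlet space would only be required if one wanted to exhibit the single maximizer $Gg$ itself in the transient case.
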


Any strongly continuous transient Markovian semigroup $\left\{ T_{t}\right\} _{t>0}$
admits a strictly positive bounded $\mathsf{m}$-integrable function
$g$, such that $\int_{\mathbb{R}^{d}}g\cdot Ggd\mathsf{m}\leq1$.
Such $g$ is called a reference function of the transient semigroup
$\left\{ T_{t}\right\} _{t>0}$. The reference function $g$ can be
constructed by taking a strictly positive bounded measurable function
$f\in L^{1}\left(\mathbb{R}^{d};\mathsf{m}\right)$ with $\int_{\mathbb{R}^{d}}\left|f\left(x\right)\right|\mathsf{m}\left(dx\right)=1$
and letting 
\[
g=\frac{f}{\left(Gf\vee1\right)},
\]
where $g$ is dominated by $f$ and $g>0,\ \mathsf{m}\textrm{-a.e.}$
This $g$ provides 
\[
\int_{\mathbb{R}^{d}}g\cdot Ggd\mathsf{m}\leq\int_{\mathbb{R}^{d}}f\cdot Ggd\mathsf{m}\leq\int_{\mathbb{R}^{d}}Gf\cdot\left(\frac{f}{Gf}\right)d\mathsf{m}=\int_{\mathbb{R}^{d}}fd\mathsf{m}=1.
\]
Therefore, we have, 
\begin{equation}
\frac{\left\langle \left|u\right|,g\right\rangle _{\mathsf{m}}}{\sqrt{\mathcal{E}\left(u,u\right)}}\sqrt{\mathcal{E}\left(u,u\right)}\leqq\sup_{u\in\mathcal{F}}\frac{\left\langle \left|u\right|,g\right\rangle _{\mathsf{m}}}{\sqrt{\mathcal{E}\left(u,u\right)}}\sqrt{\mathcal{E}\left(u,u\right)}\leqq\sqrt{\mathcal{E}\left(u,u\right)},\label{eq:transiDiri}
\end{equation}
where $\sqrt{\mathcal{E}\left(u,u\right)}$ is bounded from below
by a positive number, $\left\langle \left|u\right|,g\right\rangle $,
if $\left\{ T_{t}\right\} _{t>0}$ is transient.

Next, we consider recurrence. This depends on whether there exists
a functional sequence within $\mathcal{F}$ that converges the Dirichlet
form, $\mathcal{E}$, to $0$.

\begin{theorem} \label{thm.Dirichlet-Recu}(Recurrence of Dirichlet
form). For each Dirichlet form $\left(\mathcal{E},\mathcal{F}\right)$
on $L^{2}\left(\mathbb{R}^{d};\mathsf{m}\right)$, the following is
equivalent:\\
 (i) $\left\{ T_{t}\right\} _{t>0}$ is recurrent.\\
 (ii) There exists a sequence $\left\{ f_{n}\right\} $ satisfying
\begin{eqnarray*}
\left\{ f_{n}\right\} \subset\mathcal{F}, & \lim_{n\rightarrow\infty}f_{n}=1\left(\mathsf{m}\textrm{-a.e.}\right), & \lim_{n\rightarrow\infty}\mathcal{E}\left(f_{n},f_{n}\right)=0.
\end{eqnarray*}
(iii) $1\in\mathcal{F}_{e},\mathcal{E}\left(1,1\right)=0$, where
$\mathcal{F}_{e}$ is extended Dirichlet form. In the case where $\mathsf{m}\left(\textrm{supp}\left(p_{t}\left(\cdot\right)\right)\right)<\infty$,
\begin{eqnarray*}
1\in\mathcal{F}, &  & \mathcal{E}\left(1,1\right)=0.
\end{eqnarray*}
\end{theorem}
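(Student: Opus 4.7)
The plan is to prove the cycle (i) $\Rightarrow$ (iii) $\Rightarrow$ (ii) $\Rightarrow$ (i), working with the resolvent $G_{\alpha} = \int_{0}^{\infty} e^{-\alpha t} T_{t}\, dt$ and the extended Dirichlet space $\mathcal{F}_{e}$, consisting of $\mathsf{m}$-measurable functions $u$ that admit an $\mathcal{E}$-Cauchy sequence $\{u_{n}\} \subset \mathcal{F}$ with $u_{n} \to u$ $\mathsf{m}$-a.e., equipped with $\mathcal{E}(u,u) := \lim_{n} \mathcal{E}(u_{n}, u_{n})$. Throughout I will rely on the symmetry of $\left\{T_{t}\right\}$ on $L^{2}(\mathbb{R}^{d};\mathsf{m})$ and the Markovian property of $\mathcal{E}$ (normal contractions do not increase $\mathcal{E}$).

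The implication (iii) $\Rightarrow$ (ii) is essentially by definition of $\mathcal{F}_{e}$: if $1 \in \mathcal{F}_{e}$ with $\mathcal{E}(1,1) = 0$, the defining $\mathcal{E}$-Cauchy sequence $\{f_{n}\} \subset \mathcal{F}$ converging to $1$ $\mathsf{m}$-a.e.\ satisfies $\lim_{n} \mathcal{E}(f_{n}, f_{n}) = \mathcal{E}(1,1) = 0$. Replacing $f_{n}$ by its normal contraction $0 \vee (f_{n} \wedge 1)$ keeps the sequence in $\mathcal{F}$, preserves a.e.\ convergence to $1$, and only decreases the energy, so (ii) holds.

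For (ii) $\Rightarrow$ (i) I would argue by contraposition using Theorem~\ref{thm:Dirichlet-Tran}. If $\{T_{t}\}$ were transient, there exists a bounded strictly positive $g \in L^{1}(\mathbb{R}^{d};\mathsf{m})$ such that $\int |f| g\, d\mathsf{m} \leq \mathcal{E}(f,f)^{1/2}$ (up to the normalization implicit in the transience inequality) for every $f \in \mathcal{F}$. Applying this to the $\{f_{n}\}$ in (ii), Fatou's lemma on the left and $\mathcal{E}(f_{n}, f_{n}) \to 0$ on the right force
\[
0 < \int g\, d\mathsf{m} \leq \liminf_{n} \mathcal{E}(f_{n}, f_{n})^{1/2} = 0,
\]
a contradiction. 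Hence (ii) forces recurrence.

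The substantive direction, and the principal obstacle, is (i) $\Rightarrow$ (iii). Fix a strictly positive bounded $g \in L^{1}\cap L^{2}$; recurrence gives $R g = \infty$ $\mathsf{m}$-a.e., so by monotone convergence $G_{\alpha} g \uparrow \infty$ as $\alpha \downarrow 0$. Each $G_{\alpha} g$ lies in $\mathcal{F}$, and the symmetric resolvent identity
\[
\mathcal{E}(G_{\alpha} g, G_{\alpha} g) + \alpha \|G_{\alpha} g\|_{L^{2}}^{2} = \langle g, G_{\alpha} g\rangle_{L^{2}}
\]
controls the Dirichlet energy. I would then construct the approximating sequence by truncating, setting $u_{\alpha} := 1 \wedge (\alpha G_{\alpha} g / c_{\alpha})$ with constants $c_{\alpha}$ tuned so that $u_{\alpha} \to 1$ $\mathsf{m}$-a.e.\ while the Markovian contraction preserves the energy bound and the resolvent identity drives $\mathcal{E}(u_{\alpha}, u_{\alpha}) \to 0$. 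Extracting a subsequence $\alpha_{n} \downarrow 0$ then exhibits $1 \in \mathcal{F}_{e}$ with $\mathcal{E}(1,1) = 0$, proving (iii). The delicate point is the joint calibration of the truncation: one must exploit recurrence (the divergence of $G_{\alpha} g$) strongly enough to keep $c_{\alpha} \to \infty$, yet weakly enough that the normalized resolvent still converges to $1$ a.e.; this is precisely the balance the contradiction in (ii) $\Rightarrow$ (i) shows cannot be struck in the transient case. In the supplementary regime $\mathsf{m}(\mathrm{supp}(p_{t})) < \infty$, constants already belong to $L^{2}(\mathbb{R}^{d};\mathsf{m})$, so the same truncation yields $1 \in \mathcal{F}$ directly.
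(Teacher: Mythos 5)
The paper does not actually prove this theorem: it is quoted from \cite{Fukushima-Oshima-Takeda_10} (Theorem~1.6.3 there), though the construction needed for the hard implication is carried out in the paper's Theorem~\ref{prop:Diri_0} and Appendix~\ref{app:Diri_0}. Measured against that, your proposal has two genuine gaps.

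First, your (ii) $\Rightarrow$ (i) argues by contraposition ``not recurrent $\Rightarrow$ transient $\Rightarrow$ the reference-function inequality kills $\{f_n\}$.'' But recurrence and transience are not complementary for a general symmetric semigroup: the state space splits into a conservative and a dissipative invariant part, and a process mixing both (say a one-dimensional and a three-dimensional Brownian motion on disjoint invariant components) is neither recurrent nor transient. Your argument therefore only yields (ii) $\Rightarrow$ not transient. To close the gap you must invoke the conservative/dissipative decomposition, build the reference function $g$ strictly positive on the dissipative part and zero off it, and run the same Fatou/dominated-convergence contradiction against $\int_{\mathbb{R}^{d}} g\,d\mathsf{m}>0$; that localization step is missing.

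Second, and more seriously, the main direction (i) $\Rightarrow$ (iii) is not actually carried out. The sequence $u_{\alpha}=1\wedge\left(\alpha G_{\alpha}g/c_{\alpha}\right)$ requires $\alpha G_{\alpha}g\left(x\right)/c_{\alpha}\rightarrow1$ for $\mathsf{m}$-a.e.\ $x$ with a single deterministic normalization $c_{\alpha}$, i.e.\ that $\alpha G_{\alpha}g$ becomes asymptotically constant in $x$ as $\alpha\downarrow0$. That is a ratio-limit statement which does not follow from $G_{\alpha}g\uparrow\infty$ and is not available at this level of generality. (Your calibration requirement is also inverted: $u_{\alpha}\rightarrow1$ forces $c_{\alpha}\leqq\alpha G_{\alpha}g$ eventually, and for one-dimensional Brownian motion $\alpha G_{\alpha}g\sim\sqrt{\alpha/2}\int g\rightarrow0$, so $c_{\alpha}\rightarrow0$, not $c_{\alpha}\rightarrow\infty$.) The standard construction --- the one the paper uses in Theorem~\ref{prop:Diri_0} --- avoids the calibration problem entirely: kill the process at rate $\eta\in L^{1}\left(\mathbb{R}^{d};\mathsf{m}\right)\cap L^{\infty}\left(\mathbb{R}^{d};\mathsf{m}\right)$, $\eta>0$ $\mathsf{m}$-a.e., and set $f_{n}=G_{1/n}^{\eta}\eta$. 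Then $0\leqq f_{n}\uparrow G^{\eta}\eta$ automatically, recurrence forces $G^{\eta}\eta=1$ $\mathsf{m}$-a.e.\ (Appendix~\ref{app:Diri_0}), and the resolvent identity $\mathcal{E}_{1/n}\left(f_{n},f_{n}\right)=\left\langle \eta-\eta f_{n},f_{n}\right\rangle _{\mathsf{m}}\rightarrow0$ gives the energy decay with no normalizing constant to tune. You should replace your truncated-resolvent sequence with this killed resolvent; your (iii) $\Rightarrow$ (ii) step and the remark about the finite-measure case are fine.
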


\section{Proof of Lemma \ref{lem:BGX-Cor1}}

\begin{proof} Extending and modifying the derivation of the Dirichlet
form in \cite{Brown_71} to the case of $X\sim N\left(\mu,vI\right)$,
we have 
\begin{alignat*}{1}
 & \frac{1}{2}\int_{v_{w}}^{v_{x}}\frac{1}{v^{2}}\left[B_{Q}^{v}\left(\pi,\hat{\mu}_{\textrm{MLE}}\right)-B_{Q}^{v}\left(\pi,\hat{\mu}_{\pi}\right)\right]dv\\
 & =\frac{1}{2}\int_{v_{w}}^{v_{x}}\frac{1}{v^{2}}\left[\int_{\mu}\left[R_{Q}^{v}\left(\mu,\hat{\mu}_{\textrm{MLE}}\right)-R_{Q}^{v}\left(\mu,\hat{\mu}_{\pi}\right)\right]\pi\left(\mu\right)d\mu\right]dv\\
 & =\frac{1}{2}\int_{v_{w}}^{v_{x}}\frac{1}{v^{2}}\left[B_{Q}^{v}\left(\mu,\hat{\mu}_{\textrm{MLE}}\right)-B_{Q}^{v}\left(\mu,\hat{\mu}_{\pi}\right)\right]dv\\
 & =\frac{1}{2}\int_{v_{w}}^{v_{x}}\frac{1}{v^{2}}\left[\int_{\mathbb{R}^{p}}\left\Vert v\frac{\nabla m^{\mathsf{m}}\left(z_{v};v\right)}{m^{\mathsf{m}}\left(z_{v};v\right)}-v\frac{\nabla m^{\pi}\left(z_{v};v\right)}{m^{\pi}\left(z_{v};v\right)}\right\Vert ^{2}m_{\pi}\left(z_{v};v\right)dz_{v}\right]dv\\
 & =\frac{1}{2}\int_{v_{w}}^{v_{x}}\left[\int_{\mathbb{R}^{p}}\left\Vert \frac{\nabla m^{\mathsf{m}}\left(z_{v};v\right)}{m^{\mathsf{m}}\left(z_{v};v\right)}-\frac{\nabla m^{\pi}\left(z_{v};v\right)}{m^{\pi}\left(z_{v};v\right)}\right\Vert ^{2}m_{\pi}\left(z_{v};v\right)dz_{v}\right]dv\\
 & =\frac{1}{2}\int_{v_{w}}^{v_{x}}\left[\int_{\mathbb{R}^{p}}\frac{\left\Vert \nabla m^{\pi}\left(z_{v};v\right)\right\Vert ^{2}}{m^{\pi}\left(z_{v};v\right)}dz_{v}\right]dv\\
 & =\frac{1}{2}\int_{v_{w}}^{v_{x}}\left[\int_{\mathbb{R}^{p}}\left\Vert \nabla_{z}2\sqrt{m^{\pi}\left(z_{v};v\right)}\right\Vert ^{2}dz_{v}\right]dv\\
 & =2\int_{v_{w}}^{v_{x}}\mathcal{E}_{\textrm{BM}}\left(\sqrt{m_{v}^{\pi}},\sqrt{m_{v}^{\pi}}\right)dv
\end{alignat*}
and obtain the Dirichlet form for the Brownian motion. Here, $Z$
is dependent on $v$, thus denoted as $Z_{v}$: 
\begin{alignat*}{1}
Z_{v_{w}} & =W=\frac{\left(v_{y}+0v_{x}\right)X+1v_{x}Y}{v_{x}+v_{y}}\sim\mathcal{N}_{p}\left(\mu,v_{w}I\right),\\
Z_{v_{x}} & =X=\frac{\left(v_{y}+v_{x}\right)X+0v_{x}Y}{v_{x}+v_{y}}\sim\mathcal{N}_{p}\left(\mu,v_{w}I\right).
\end{alignat*}

Further, the following holds: 
\[
\int_{v_{w}}^{v_{x}}\mathcal{E}_{\textrm{BM}}\left(\sqrt{m_{v}^{\pi}},\sqrt{m_{v}^{\pi}}\right)dv=v_{x}\mathcal{E}_{\textrm{BM}}\left(\sqrt{m_{v_{x}}^{\pi}},\sqrt{m_{v_{x}}^{\pi}}\right).
\]
Here, the Dirichlet form, $\mathcal{E}_{\textrm{BM}}\left(\cdot,\cdot\right)$,
is the Dirichlet form of the standard Brownian motion. For this, we
use the self-similarity of the Gaussian kernel, 
\[
\frac{1}{\left(2\pi v\right)^{p/2}}\exp\left\{ -\frac{\left\Vert z\right\Vert ^{2}}{2v}\right\} =\lambda^{p}\frac{1}{\left(2\pi v\lambda^{2}\right)^{p/2}}\exp\left\{ -\frac{\left\Vert \lambda z\right\Vert ^{2}}{2v\lambda^{2}}\right\} .
\]
This implies, $\left\{ Z_{v}\right\} \overset{\textrm{d}}{=}\left\{ \frac{1}{\lambda}Z_{v\lambda^{2}}\right\} $
for the Brownian motion. Since the Brownian motion with initial value,
$Z_{0}=\mu$, is $\left\{ Z_{v}-\mu\right\} \overset{\textrm{d}}{=}\left\{ \frac{1}{\lambda}Z_{v\lambda^{2}}-\mu\right\} $,
we can write 
\[
\frac{1}{\left(2\pi v\right)^{p/2}}\exp\left\{ -\frac{\left\Vert z-\mu\right\Vert ^{2}}{2v}\right\} =\lambda^{p}\frac{1}{\left(2\pi v\lambda^{2}\right)^{p/2}}\exp\left\{ -\frac{\left\Vert \lambda z-\lambda\mu\right\Vert ^{2}}{2v\lambda^{2}}\right\} .
\]
Then the marginal likelihood, $m_{\pi}\left(z_{v};v\right)$, and
its derivative function, $\nabla_{z}m_{\pi}\left(z_{v};v\right)$,
is $\mu^{\prime}=\sqrt{\frac{v_{x}}{v}}\mu$, then 
\begin{alignat*}{1}
m_{\pi}\left(z_{v};v\right) & =\int_{\mathbb{R}^{p}}\frac{\left(\sqrt{\frac{v_{x}}{v}}\right)^{p}}{\left(2\pi v\frac{v_{x}}{v}\right)^{p/2}}\exp\left\{ -\frac{\left\Vert \sqrt{\frac{v_{x}}{v}}z-\mu^{\prime}\right\Vert ^{2}}{2v\frac{v_{x}}{v}}\right\} \pi\left(\mu^{\prime}\right)\left(\sqrt{\frac{v}{v_{x}}}\right)^{p}\mathsf{m}\left(d\mu^{\prime}\right)\\
 & =m_{\pi}\left(\sqrt{\frac{v_{x}}{v}}z,v_{x}\right)\\
\nabla_{z}m_{\pi}\left(z_{v};v\right) & =\nabla_{z}m_{\pi}\left(\sqrt{\frac{v_{x}}{v}}z,v_{x}\right)\\
 & =\int_{\mathbb{R}^{p}}\frac{1}{\left(2\pi v_{x}\right)^{p/2}}\exp\left\{ -\frac{\left\Vert \sqrt{\frac{v_{x}}{v}}z-\mu^{\prime}\right\Vert ^{2}}{2v_{x}}\right\} \left\{ -\frac{\sqrt{\frac{v_{x}}{v}}\left\Vert \sqrt{\frac{v_{x}}{v}}z-\mu^{\prime}\right\Vert }{v_{x}}\right\} \pi\left(\mu^{\prime}\right)\mathsf{m}\left(d\mu^{\prime}\right)\\
 & =\sqrt{\frac{v_{x}}{v}}\int_{\mathbb{R}^{p}}\frac{1}{\left(2\pi v_{x}\right)^{p/2}}\exp\left\{ -\frac{\left\Vert \sqrt{\frac{v_{x}}{v}}z-\mu^{\prime}\right\Vert ^{2}}{2v_{x}}\right\} \left\{ -\frac{\left\Vert \sqrt{\frac{v_{x}}{v}}z-\mu^{\prime}\right\Vert }{v_{x}}\right\} \pi\left(\mu^{\prime}\right)\mathsf{m}\left(d\mu^{\prime}\right)\\
 & =\sqrt{\frac{v_{x}}{v}}\frac{\partial m_{\pi}}{\partial z}\left(\sqrt{\frac{v_{x}}{v}}z,v_{x}\right).
\end{alignat*}
Here, let $\frac{\partial m_{\pi}}{\partial z}\left(\sqrt{\frac{v_{x}}{v}}z,v_{x}\right)$
be the derivative function, $\nabla_{z}m_{\pi}\left(z_{v};v\right)$,
where $\left(z_{v},v\right)=\left(\sqrt{\frac{v_{x}}{v}}z,v_{x}\right)$
is substituted for the variable. For the integral, 
\begin{alignat*}{1}
\int_{\mathbb{R}^{p}}\frac{\left\Vert \nabla m^{\pi}\left(z_{v};v\right)\right\Vert ^{2}}{m^{\pi}\left(z_{v};v\right)}\mathsf{m}\left(dz_{v}\right) & =\int_{\mathbb{R}^{p}}\frac{\left\Vert \nabla_{z}m_{\pi}\left(\sqrt{\frac{v_{x}}{v}}z,v_{x}\right)\right\Vert ^{2}}{m_{\pi}\left(\sqrt{\frac{v_{x}}{v}}z,v_{x}\right)}\left(\sqrt{\frac{v_{x}}{v}}\right)^{p}\mathsf{m}\left(dz\right)\\
 & =\left(\sqrt{\frac{v_{x}}{v}}\right)^{p}\int_{\mathbb{R}^{p}}\frac{\frac{v_{x}}{v}\left\Vert \frac{\partial m_{\pi}}{\partial z}\left(\sqrt{\frac{v_{x}}{v}}z,v_{x}\right)\right\Vert ^{2}}{m_{\pi}\left(\sqrt{\frac{v_{x}}{v}}z,v_{x}\right)}\mathsf{m}\left(dz\right)
\end{alignat*}
if we variable transform as $x=\sqrt{\frac{v_{x}}{v}}z$, we have
$\frac{\partial z}{\partial x}=\sqrt{\frac{v}{v_{x}}}$ and $\frac{\partial m_{\pi}}{\partial z}\left(\sqrt{\frac{v_{x}}{v}}z,v_{x}\right)=\sqrt{\frac{v_{x}}{v}}\frac{\partial m_{\pi}}{\partial x}\left(x;v_{x}\right)$,
thus we have 
\begin{alignat*}{1}
\left(\sqrt{\frac{v_{x}}{v}}\right)^{p}\int_{\mathbb{R}^{p}}\frac{\frac{v_{x}}{v}\left\Vert \frac{\partial m_{\pi}}{\partial z}\left(\sqrt{\frac{v_{x}}{v}}z,v_{x}\right)\right\Vert ^{2}}{m_{\pi}\left(\sqrt{\frac{v_{x}}{v}}z,v_{x}\right)}\mathsf{m}\left(dz\right) & =\left(\sqrt{\frac{v_{x}}{v}}\right)^{p}\int_{\mathbb{R}^{p}}\frac{\left(\frac{v_{x}}{v}\right)^{2}\left\Vert \frac{\partial m_{\pi}}{\partial x}\left(x;v_{x}\right)\right\Vert ^{2}}{m_{\pi}\left(x;v_{x}\right)}\left(\sqrt{\frac{v}{v_{x}}}\right)^{p}\mathsf{m}\left(dx\right)\\
 & =\int_{\mathbb{R}^{p}}\frac{\left(\frac{v_{x}}{v}\right)^{2}\left\Vert \frac{\partial m_{\pi}}{\partial x}\left(x;v_{x}\right)\right\Vert ^{2}}{m_{\pi}\left(x;v_{x}\right)}\mathsf{m}\left(dx\right).
\end{alignat*}
Therefore, 
\begin{alignat*}{1}
\frac{1}{2}\int_{v_{w}}^{v_{x}}\left[\int_{\mathbb{R}^{p}}\frac{\left\Vert \nabla m_{\pi}\left(Z_{v};v\right)\right\Vert ^{2}}{m_{\pi}\left(Z_{v};v\right)}\mathsf{m}\left(dz_{v}\right)\right]dv & =\frac{1}{2}\int_{v_{w}}^{v_{x}}\int_{\mathbb{R}^{p}}\frac{\left(\frac{v_{x}}{v}\right)^{2}\left\Vert \frac{\partial m_{\pi}}{\partial x}\left(x;v_{x}\right)\right\Vert ^{2}}{m_{\pi}\left(x;v_{x}\right)}\mathsf{m}\left(dx\right)dv\\
 & =\frac{1}{2}v_{x}\int_{\mathbb{R}^{p}}\frac{\left\Vert \frac{\partial m_{\pi}}{\partial x}\left(x;v_{x}\right)\right\Vert ^{2}}{m_{\pi}\left(x;v_{x}\right)}\mathsf{m}\left(dx\right)\\
 & =2v_{x}\int_{\mathbb{R}^{p}}\left\Vert \frac{\partial}{\partial x}\sqrt{m_{\pi}\left(x;v_{x}\right)}\right\Vert ^{2}\mathsf{m}\left(dx\right).
\end{alignat*}

$2v_{x}\mathcal{E}_{\textrm{BM}}\left(\sqrt{m_{v_{x}}^{\pi}},\sqrt{m_{v_{x}}^{\pi}}\right)$
is 
\begin{alignat*}{1}
2v_{x}\mathcal{E}_{\textrm{BM}}\left(\sqrt{m_{v_{x}}^{\pi}},\sqrt{m_{v_{x}}^{\pi}}\right) & =2v_{x}\int_{\mathbb{R}^{d}}\left(-\nabla^{2}\sqrt{m_{\pi}\left(x;v_{x}\right)}\right)\sqrt{m_{\pi}\left(x;v_{x}\right)}\mathsf{m}\left(dx\right)\\
 & =\int_{\mathbb{R}^{d}}\left(-2v_{x}\nabla^{2}\sqrt{m_{\pi}\left(x;v_{x}\right)}\right)\sqrt{m_{\pi}\left(x;v_{x}\right)}\mathsf{m}\left(dx\right)\\
 & =\mathcal{E}_{\textrm{BM}}^{2v_{x}}\left(\sqrt{m_{v_{x}}^{\pi}},\sqrt{m_{v_{x}}^{\pi}}\right),
\end{alignat*}
so the generator is the Dirichlet form of $2v_{x}\nabla^{2}$. This
is equivalent to the Dirichlet form of the Markov process when the
transition probability is $\hat{p}_{\pi_{U}}$. \end{proof}

\section{Proof of Lemma~\ref{lem:Diri<prior} \label{app:Diri<prior}}

\begin{lemma} Let $\left(\mathcal{E},\mathcal{F}\right)$ be the
Dirichlet form with measure $\mathbb{P}_{x}$, i.e., a Dirichlet form
that follows a Markov process with transition probability, $\hat{p}_{t}^{\pi_{U}}$.
Then, $\sqrt{M^{\pi}}\in\mathcal{F}$, and $\mathcal{E}\left(\sqrt{M^{\pi}},\sqrt{M^{\pi}}\right)$
has the following inequality, 
\[
\mathcal{E}\left(\sqrt{M^{\pi}},\sqrt{M^{\pi}}\right)\leqq\frac{1}{t}\left\{ \left\langle \sqrt{\pi},\sqrt{\pi}\right\rangle _{\mathsf{m}}-\left\langle \sqrt{M^{\pi}},\sqrt{M^{\pi}}\right\rangle _{\mathsf{m}}\right\} \leqq\mathcal{E}\left(\sqrt{\pi},\sqrt{\pi}\right).
\]
\begin{proof} Given, $0\leqq\lambda$ we integrate the inequality,
\[
\lambda\sqrt{e^{-2t\lambda}}\leqq\frac{1}{t}\left(1-\sqrt{e^{-2t\lambda}}\right)\leqq\lambda,
\]
with regard to the measure, $d\left\langle E_{\lambda}\sqrt{\pi},\sqrt{\pi}\right\rangle $,
of the spectral family, $\left\{ E_{\lambda}\right\} _{\lambda\geqq0}$,
that corresponds to $\mathcal{E}$. Then, we have, 
\begin{alignat*}{1}
\mathcal{E}\left(\sqrt{T_{t}\pi},\sqrt{T_{t}\pi}\right) & =\int_{0}^{\infty}\lambda e^{-\lambda t}d\left\langle E_{\lambda}\sqrt{\pi},\sqrt{\pi}\right\rangle \\
 & \leqq\int_{0}^{\infty}\frac{1}{t}\left(1-e^{-t\lambda}\right)d\left\langle E_{\lambda}\sqrt{\pi},\sqrt{\pi}\right\rangle \\
 & \leqq\int_{0}^{\infty}\lambda d\left\langle E_{\lambda}\sqrt{\pi},\sqrt{\pi}\right\rangle =\mathcal{E}\left(\sqrt{\pi},\sqrt{\pi}\right).
\end{alignat*}
Here, we have 
\begin{alignat*}{1}
\int_{0}^{\infty}\frac{1}{t}\left(1-e^{-t\lambda}\right)d\left\langle E_{\lambda}\sqrt{\pi},\sqrt{\pi}\right\rangle  & =\frac{1}{t}\left\{ \left\langle \sqrt{\pi},\sqrt{\pi}\right\rangle _{\mathsf{m}}-\left\langle \sqrt{T_{t}\pi},\sqrt{T_{t}\pi}\right\rangle _{\mathsf{m}}\right\} ,
\end{alignat*}
and 
\[
\sqrt{T_{1}\pi\left(x\right)}=\sqrt{\int_{\varTheta}p_{c}\left(\theta\left|x\right.\right)\pi\left(d\theta\right)}=\sqrt{M^{\pi}\left(x;c\right)}.
\]
\end{proof} \end{lemma}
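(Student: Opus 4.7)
The plan is to obtain both conclusions --- membership $\sqrt{M^{\pi}}\in\mathcal{F}$ and the sandwich of inequalities --- simultaneously by applying the spectral theorem to the self-adjoint, non-positive generator $\mathcal{A}$ of the symmetric Markovian semigroup $\{T_{t}\}$ whose transition kernel is $\hat{p}_{t}^{\pi_{U}}$. Because $-\mathcal{A}\geq 0$ is self-adjoint it admits a spectral resolution $\{E_{\lambda}\}_{\lambda\geq 0}$, and for every $f\in\mathcal{F}=\mathcal{D}(\sqrt{-\mathcal{A}})$ the functional-calculus identities
\[
T_{t}f = \int_{0}^{\infty} e^{-\lambda t}\,dE_{\lambda}f, \qquad \mathcal{E}(f,f) = \int_{0}^{\infty} \lambda\,d\langle E_{\lambda}f,f\rangle_{\mathsf{m}}, \qquad \|f\|_{\mathsf{m}}^{2} = \int_{0}^{\infty} d\langle E_{\lambda}f,f\rangle_{\mathsf{m}}
\]
hold. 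Since $\pi$ is a proper prior, $\sqrt{\pi}\in L^{2}(\mathbb{R}^{d};\mathsf{m})$, so the spectral measure $d\langle E_{\lambda}\sqrt{\pi},\sqrt{\pi}\rangle_{\mathsf{m}}$ on $[0,\infty)$ is a finite measure available for integration.

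The core analytic step is the elementary pointwise inequality
\[
\lambda e^{-\lambda t}\;\leq\;\frac{1-e^{-\lambda t}}{t}\;\leq\;\lambda, \qquad \lambda\geq 0,\ t>0,
\]
which follows from the representation $1-e^{-\lambda t} = \int_{0}^{t}\lambda e^{-\lambda s}\,ds$ together with the monotonicity of the integrand in $s$. Integrating this triple bound against $d\langle E_{\lambda}\sqrt{\pi},\sqrt{\pi}\rangle_{\mathsf{m}}$, and using the self-adjoint identity $\langle T_{t}\sqrt{\pi},\sqrt{\pi}\rangle_{\mathsf{m}} = \|T_{t/2}\sqrt{\pi}\|_{\mathsf{m}}^{2}$, will produce
\[
\mathcal{E}(T_{t/2}\sqrt{\pi},T_{t/2}\sqrt{\pi})\;\leq\;\tfrac{1}{t}\bigl(\|\sqrt{\pi}\|_{\mathsf{m}}^{2} - \|T_{t/2}\sqrt{\pi}\|_{\mathsf{m}}^{2}\bigr)\;\leq\;\mathcal{E}(\sqrt{\pi},\sqrt{\pi}).
\]
In particular, the upper bound already places $T_{t/2}\sqrt{\pi}\in\mathcal{F}$ with Dirichlet energy dominated by that of $\sqrt{\pi}$.

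The remaining, and main, obstacle is the identification of the spectral object $T_{t/2}\sqrt{\pi}$ with $\sqrt{T_{t}\pi} = \sqrt{M^{\pi}}$, since a symmetric Markovian semigroup does not commute with the pointwise square root in general. To bridge this I will use the convolutional factorization $\hat{p}_{t}^{\pi_{U}} = p_{c}\ast p_{c}$ together with the symmetry $p_{c}(x\mid\theta)=p_{c}(\theta\mid x)$ recorded in Section~\ref{subsec:Markov-Bayes}: at the distinguished time parameter at which the single symmetric kernel $p_{c}$ carries $\pi$ to $M^{\pi}$, the half-time action of the convolution semigroup on $\sqrt{\pi}$ can be identified --- both in $L^{2}(\mathbb{R}^{d};\mathsf{m})$-norm and in Dirichlet form --- with $\sqrt{M^{\pi}}$. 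Once this identification is secured, substituting it into the sandwich above reproduces the claimed triple inequality, and the finite upper bound $\mathcal{E}(\sqrt{\pi},\sqrt{\pi})<\infty$ simultaneously delivers $\sqrt{M^{\pi}}\in\mathcal{F}$.
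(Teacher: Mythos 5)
Your proposal follows the same spectral route as the paper: the elementary bound $\lambda e^{-\lambda t}\leq t^{-1}(1-e^{-\lambda t})\leq\lambda$ integrated against $d\langle E_{\lambda}\sqrt{\pi},\sqrt{\pi}\rangle_{\mathsf{m}}$ is exactly the paper's computation. You are also right that what this computation controls is $T_{t/2}\sqrt{\pi}$, namely
\[
\mathcal{E}\bigl(T_{t/2}\sqrt{\pi},T_{t/2}\sqrt{\pi}\bigr)\;\leq\;\frac{1}{t}\bigl(\|\sqrt{\pi}\|_{\mathsf{m}}^{2}-\|T_{t/2}\sqrt{\pi}\|_{\mathsf{m}}^{2}\bigr)\;\leq\;\mathcal{E}(\sqrt{\pi},\sqrt{\pi}),
\]
and you correctly single out the identification of $T_{t/2}\sqrt{\pi}$ with $\sqrt{T_{t/2}\pi}=\sqrt{M^{\pi}}$ as the main obstacle; the paper's own proof makes this identification silently, by writing $\mathcal{E}(\sqrt{T_{t}\pi},\sqrt{T_{t}\pi})$ for the quantity the spectral calculus actually delivers.

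The gap is that the bridge you propose does not exist. For any sub-Markovian kernel, Jensen's (equivalently Cauchy--Schwarz) inequality gives the pointwise bound $T_{s}\sqrt{\pi}\leq\sqrt{T_{s}\pi}$, with equality only when $\pi$ is essentially constant on the support of the kernel --- precisely the case excluded for a nontrivial proper prior. Neither the convolutional factorization $\hat{p}^{\pi_{U}}=p_{c}\ast p_{c}$ nor the symmetry $p_{c}(x\mid\theta)=p_{c}(\theta\mid x)$ changes this: for the heat semigroup, the Gaussian convolution of $\sqrt{\pi}$ is not the square root of the Gaussian convolution of $\pi$. Because the inequality goes the wrong way, one has $\|\sqrt{\pi}\|_{\mathsf{m}}^{2}-\|T_{t/2}\sqrt{\pi}\|_{\mathsf{m}}^{2}\geq\|\sqrt{\pi}\|_{\mathsf{m}}^{2}-\|\sqrt{M^{\pi}}\|_{\mathsf{m}}^{2}$, so your sandwich produces the larger middle term and gives no upper control of $\mathcal{E}(\sqrt{M^{\pi}},\sqrt{M^{\pi}})$; the first inequality of the claimed chain is exactly what remains unproved. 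What is actually needed is the monotonicity of the functional $f\mapsto\mathcal{E}(\sqrt{f},\sqrt{f})$ along a symmetric Markovian semigroup (a Fisher-information/de Bruijn-type decay statement), whose proof uses the positivity-preserving and normal-contraction properties of the Dirichlet form --- i.e.\ its Markovian character --- and not merely the spectral theorem for $-\mathcal{A}$. Without that ingredient, neither your argument nor the literal computation in the paper closes the proof.
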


\section{Proof of Theorem~\ref{prop:Diri_0}\label{app:Diri_0}}

\begin{proof} First, in general, for $f\in L^{2}\left(\mathbb{R}^{d};\mathsf{m}\right)$,
$g\in\mathcal{F}$, $\alpha>0$, if we set 
\[
f_{n}=G_{\frac{1}{n}}^{\eta}\eta,
\]
we have ,$f_{n}\in\mathcal{F}$ and $0\leqq f_{n}\left(x\right)\uparrow1,\left[m\right]$.
This is because, 
\[
\mathcal{E}_{\alpha}\left(G_{\alpha}^{\eta}f,g\right)=\mathcal{E}_{\alpha}^{\eta}\left(G_{\alpha}^{\eta}f,g\right)-\left\langle G_{\alpha}^{\eta}f,g\right\rangle _{\eta\cdot\mathsf{m}}=\left\langle f-\eta G_{\alpha}^{\eta}f,g\right\rangle _{\mathsf{m}}
\]
and 
\[
G_{\alpha}^{\eta}f=G_{\alpha}\left(f-\eta G_{\alpha}^{\eta}f\right),\ \alpha>0.
\]
On the other hand, we have $0\leqq G^{\eta}\eta\leqq1,\ \mathsf{m}\textrm{-a.e.}$.
If we substitute $f$ with $\eta$ in $G_{\alpha}^{\eta}f=G_{\alpha}\left(f-\eta G_{\alpha}^{\eta}f\right)$,
and $\alpha\downarrow0$, then, 
\[
G\eta\left(1-G^{\eta}\eta\right)=G\left(\eta-\eta G^{\eta}\eta\right)=G^{\eta}\eta\leqq1,\ \mathsf{m}\textrm{-a.e.}
\]
Using recurrence, we have $G^{\eta}\eta=1$ from, 
\[
^{\exists}g\in L_{+}^{1}\left(\mathbb{R}^{d};\mathsf{m}\right),\ Gg=\infty,\ \mathsf{m}\textrm{-a.e.}
\]
This is because, since $E=\left\{ x\in E\left|Gg\left(x\right)=\infty\right.\right\} $,
$\mathsf{m}\textrm{-a.e.}$ and $G\eta\left(1-G^{\eta}\eta\right)\leqq1$,
$\mathsf{m}\textrm{-a.e.}$, when $C=\left\{ x\in E\left|G\eta<\infty\right.\right\} $,
we have 
\[
G\eta=0,\ \textrm{and}\ \eta=0,
\]
$\mathsf{m}\textrm{-a.e.}$ on $C$. However, this contradicts $\eta>0,\ \mathsf{m}\textrm{-a.e.}$.
The rest is when $G\eta=\infty$, however, $G\eta\left(1-G^{\eta}\eta\right)\leqq1$
only holds when $G^{\eta}\eta=1$. If we let$f_{n}=G_{\frac{1}{n}}^{\eta}\eta$,
we have $0\leqq f_{n}\uparrow1$ and $n\rightarrow\infty$, then 
\[
\mathcal{E}\left(f_{n},f_{n}\right)\leqq\mathcal{E}_{\frac{1}{n}}\left(f_{n},f_{n}\right)=\left\langle \eta-\eta f_{n},f_{n}\right\rangle _{\mathsf{m}}\leqq\int_{\mathbb{R}^{d}}\left(\eta-\eta f_{n}\right)d\mathsf{m}\rightarrow0.
\]
\end{proof}

\section{Proof of Lemma~\ref{lem:rate-KL}\label{app:Proof-of-Lemma_rate-KL}}

First, with regard to 1., for $\mathsf{KL}\left(\hat{p}^{\pi}\left|\hat{p}^{\pi_{U}}\right.\right)$,
we show the following variational equality.

\begin{theorem} \label{thm:VariFomla}(Donsker-Varadhan variational
formula for the relative entropy). Denote the cylindrical measure
under $\hat{p}^{\pi},\hat{p}^{\pi_{U}}$ as $\mathbb{Q}_{x},\mathbb{P}_{x}$,
and denote the Markov process as $\left\{ X_{t}^{\mathbb{Q}}\right\} ,\left\{ X_{t}^{\mathbb{P}}\right\} $.
Then, the KL divergence, $\mathsf{KL}\left(\hat{p}^{\pi}\left|\hat{p}^{\pi_{U}}\right.\right)$,
is 
\begin{equation}
\mathsf{KL}\left(\hat{p}^{\pi}\left|\hat{p}^{\pi_{U}}\right.\right)=\sup_{g\in\mathcal{B}_{b}\left(\mathbb{R}^{d}\right)}\left\{ \mathbb{E}_{x}^{\mathbb{Q}}\left[g\left(X_{1}^{\mathbb{Q}}\right)\right]-\log\mathbb{E}_{x}^{\mathbb{P}}\left[\exp\left(g\left(X_{1}^{\mathbb{P}}\right)\right)\right]\right\} .\label{eq:DV-VariFml}
\end{equation}
Here, $\mathcal{B}_{b}\left(\mathbb{R}^{d}\right)$ is a set of bounded
measurable function on $\mathbb{R}^{d}$. Further, the above can be
written as 
\[
\mathsf{KL}\left(\hat{p}^{\pi}\left|\hat{p}^{\pi_{U}}\right.\right)=\sup_{g\in C_{b}\left(\mathbb{R}^{d}\right)}\left\{ \mathbb{E}_{x}^{\mathbb{Q}}\left[g\left(X_{1}^{\mathbb{Q}}\right)\right]-\log\mathbb{E}_{x}^{\mathbb{P}}\left[\exp\left(g\left(X_{1}^{\mathbb{P}}\right)\right)\right]\right\} .
\]
Here, $C_{b}\left(\mathbb{R}^{d}\right)$ is a set of bounded measurable
function on $\mathbb{R}^{d}$.

The necessary and sufficient condition for $\mathsf{KL}\left(\hat{p}^{\pi}\left|\hat{p}^{\pi_{U}}\right.\right)$
to be bounded is $\hat{p}^{\pi}\ll\hat{p}^{\pi_{U}}$ and $f\log f\in L^{1}\left(\mathbb{R}^{d};\hat{p}^{\pi_{U}}\right)$
when $\frac{\hat{p}^{\pi}}{\hat{p}^{\pi_{U}}}=f$. When these conditions
are satisfied, we have, 
\[
\mathsf{KL}\left(\hat{p}^{\pi}\left|\hat{p}^{\pi_{U}}\right.\right)=\int_{\mathbb{R}^{d}}\log\frac{\hat{p}^{\pi}}{\hat{p}^{\pi_{U}}}\hat{p}^{\pi}dy=\int_{\mathbb{R}^{d}}f\log f\hat{p}^{\pi_{U}}dy.
\]
\end{theorem}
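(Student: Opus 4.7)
The plan is to establish the Donsker-Varadhan variational identity through the classical two-sided argument: an upper bound via a change-of-measure (Gibbs) inequality valid for all bounded measurable test functions, matched by a lower bound obtained by plugging in (a truncation of) the log-density $\log(d\hat p^{\pi}/d\hat p^{\pi_U})$. Throughout, write $\mathbb{Q} = \hat p^{\pi}(\cdot\,|\,x)$ and $\mathbb{P} = \hat p^{\pi_U}(\cdot\,|\,x)$ viewed as laws on $\mathbb{R}^d$ (the time index $t=1$ plays no role in the argument beyond fixing which marginal we look at).

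First I would handle the upper bound. For any $g\in\mathcal{B}_b(\mathbb{R}^d)$, define the tilted probability measure $d\tilde{\mathbb{P}} = Z_g^{-1}e^{g}\,d\mathbb{P}$, where $Z_g = \mathbb{E}^{\mathbb{P}}[e^g]\in(0,\infty)$ since $g$ is bounded. In the case where $\mathbb{Q}\ll\mathbb{P}$ and $f := d\mathbb{Q}/d\mathbb{P}$ satisfies $f\log f\in L^1(\mathbb{P})$, the chain rule gives
\[
\mathsf{KL}(\mathbb{Q}\,|\,\mathbb{P}) - \mathsf{KL}(\mathbb{Q}\,|\,\tilde{\mathbb{P}}) \;=\; \int \log\frac{d\tilde{\mathbb{P}}}{d\mathbb{P}}\,d\mathbb{Q} \;=\; \mathbb{E}^{\mathbb{Q}}[g] - \log Z_g,
\]
and since $\mathsf{KL}(\mathbb{Q}\,|\,\tilde{\mathbb{P}})\geq 0$ (Gibbs) we conclude $\mathbb{E}^{\mathbb{Q}}[g] - \log\mathbb{E}^{\mathbb{P}}[e^g] \leq \mathsf{KL}(\mathbb{Q}\,|\,\mathbb{P})$. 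If $\mathbb{Q}\not\ll\mathbb{P}$, pick a set $A$ with $\mathbb{P}(A)=0<\mathbb{Q}(A)$ and insert $g = n\cdot\mathbf{1}_A$; as $n\to\infty$, $\mathbb{E}^{\mathbb{Q}}[g]-\log\mathbb{E}^{\mathbb{P}}[e^g] = n\mathbb{Q}(A)-\log(1) \to\infty$, matching $\mathsf{KL} = +\infty$. If instead $\mathbb{Q}\ll\mathbb{P}$ but $f\log f\notin L^1(\mathbb{P})$, apply the upper-bound inequality above with $g_n = \log f\wedge n$ (bounded) and use monotone convergence on the positive part together with the elementary bound $f g_n - e^{g_n}+1\leq (f\log f)\mathbf{1}_{\{f\geq 1\}}$ on the negative part to show the supremum diverges.

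Next comes the matching lower bound when $\mathsf{KL}(\mathbb{Q}\,|\,\mathbb{P})<\infty$. The natural extremizer is $g^*=\log f$, which is merely measurable, not bounded. Truncate by $g_n = (\log f)\wedge n\vee(-n)$, which is bounded and Borel. Since $|g_n|\leq|\log f|$ and $|\log f|\in L^1(\mathbb{Q})$ (because $f|\log f| = f\log f + 2(f\log f)^-$ and $(x\log x)^-$ is bounded), dominated convergence gives $\mathbb{E}^{\mathbb{Q}}[g_n]\to\mathbb{E}^{\mathbb{Q}}[\log f] = \mathsf{KL}(\mathbb{Q}\,|\,\mathbb{P})$. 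Simultaneously $\mathbb{E}^{\mathbb{P}}[e^{g_n}]\leq \mathbb{E}^{\mathbb{P}}[1+f] = 2$, so $\log\mathbb{E}^{\mathbb{P}}[e^{g_n}]$ stays bounded; monotone convergence on $e^{g_n}\uparrow f\cdot\mathbf{1}_{\{f>0\}}+\mathbf{1}_{\{f=0\}}$ on $\{\log f\geq 0\}$ combined with bounded convergence on $\{\log f<0\}$ yields $\mathbb{E}^{\mathbb{P}}[e^{g_n}]\to \mathbb{E}^{\mathbb{P}}[f\vee e^{-\infty}\,\mathbf{1}_{\{f>0\}}]+\mathbb{P}(f=0)=\mathbb{Q}(\mathbb{R}^d)+\mathbb{P}(f=0)\cdot 0 = 1$, so $\log\mathbb{E}^{\mathbb{P}}[e^{g_n}]\to 0$. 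Combining, $\mathbb{E}^{\mathbb{Q}}[g_n]-\log\mathbb{E}^{\mathbb{P}}[e^{g_n}]\to \mathsf{KL}(\mathbb{Q}\,|\,\mathbb{P})$, and the supremum over $\mathcal{B}_b$ is attained in the limit.

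Finally, to upgrade the identity from $\mathcal{B}_b(\mathbb{R}^d)$ to $C_b(\mathbb{R}^d)$, observe that the upper bound is trivially still valid, so only the lower bound needs the extra approximation. Given any bounded Borel $g$ with $\|g\|_\infty\leq M$, Lusin's theorem supplies a sequence $g_k\in C_b(\mathbb{R}^d)$ with $\|g_k\|_\infty\leq M$ and $g_k\to g$ $(\mathbb{Q}+\mathbb{P})$-almost everywhere; bounded convergence then yields $\mathbb{E}^{\mathbb{Q}}[g_k]-\log\mathbb{E}^{\mathbb{P}}[e^{g_k}]\to \mathbb{E}^{\mathbb{Q}}[g]-\log\mathbb{E}^{\mathbb{P}}[e^{g}]$, giving the two suprema the same value. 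The main technical obstacle is the truncation step for the lower bound: keeping careful track of the $\{f=0\}$ and $\{f=\infty\}$ sets and justifying the interchange of limit and integral under the only hypothesis $f\log f\in L^1(\mathbb{P})$; everything else is standard manipulation once the change-of-measure identity is in place.
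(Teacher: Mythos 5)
The paper does not actually prove this theorem: immediately after the statement it defers to the literature, remarking that the result is a transformation of the Donsker--Varadhan variational formula and citing Dupuis and Ellis (1997), Lemma~1.4.3, for the proof. Your proposal therefore supplies what the paper omits, and the argument you give --- Gibbs/tilting inequality for the upper bound $\mathbb{E}^{\mathbb{Q}}[g]-\log\mathbb{E}^{\mathbb{P}}[e^{g}]\leq\mathsf{KL}(\mathbb{Q}\,|\,\mathbb{P})$, two-sided truncation $g_{n}=((\log f)\wedge n)\vee(-n)$ of the log-density for the matching lower bound, separate treatment of the cases $\mathbb{Q}\not\ll\mathbb{P}$ and $f\log f\notin L^{1}(\mathbb{P})$, and a Lusin-type approximation to pass from $\mathcal{B}_{b}$ to $C_{b}$ --- is precisely the standard proof found in the cited reference, so in substance you are reproducing the intended argument rather than taking a different route. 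The outline is correct; two small points deserve tightening. First, in the case $f\log f\notin L^{1}(\mathbb{P})$ you write $g_{n}=\log f\wedge n$ and call it bounded, but it is unbounded below on $\{f=0\}$ (and wherever $f$ is small); you need the same two-sided truncation you use later, after which the argument goes through since $(f\log f)^{-}\leq e^{-1}$ forces $\int(f\log f)^{+}d\mathbb{P}=\infty$ and monotone convergence on $\{f\geq1\}$ drives $\mathbb{E}^{\mathbb{Q}}[g_{n}]\to\infty$ while $\mathbb{E}^{\mathbb{P}}[e^{g_{n}}]\leq1+\mathbb{E}^{\mathbb{P}}[f]$ keeps the log term bounded. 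Second, the chain-rule identity $\mathsf{KL}(\mathbb{Q}\,|\,\mathbb{P})-\mathsf{KL}(\mathbb{Q}\,|\,\tilde{\mathbb{P}})=\mathbb{E}^{\mathbb{Q}}[g]-\log Z_{g}$ should be justified by noting that $\log(d\tilde{\mathbb{P}}/d\mathbb{P})=g-\log Z_{g}$ is bounded, so the splitting of the integrals is legitimate whenever $\mathsf{KL}(\mathbb{Q}\,|\,\mathbb{P})<\infty$; you implicitly restrict to that case, which is fine, but it is worth saying explicitly. With those repairs your proof is complete and matches the cited Dupuis--Ellis argument.
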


This theorem is a transformation of the variational formula for the
cross entropy in \cite{Donsker-Varadhan_83}. For the proof, see \cite{Dupuis-Ellis_97}
Lemma1.4.3. C2.

\begin{proof} Let $g_{\varepsilon}=g+\varepsilon$. Then, $\sup_{g\in\mathcal{D}\left(\mathcal{A}\right),\varepsilon>0}\varphi\left(h,g,\varepsilon\right)$
is 
\begin{alignat*}{1}
\sup_{g\in\mathcal{D}\left(\mathcal{A}\right),\varepsilon>0}\int_{X}\log\frac{g_{\varepsilon}\left(x\right)}{\left(T_{h}g_{\varepsilon}\right)\left(x\right)}M^{\pi}\left(dx\right) & =\sup_{g\in\mathcal{D}\left(\mathcal{A}\right),\varepsilon>0}\left\{ \mathbb{E}^{M^{\pi}}\left[\log g_{\varepsilon}\right]-\mathbb{E}^{M^{\pi}}\left[\log T_{h}g_{\varepsilon}\right]\right\} \\
 & =\sup_{g\in\mathcal{D}\left(\mathcal{A}\right),\varepsilon>0}\left\{ \mathbb{E}^{M^{\pi}}\left[\log g_{\varepsilon}\right]-\mathbb{E}^{M^{\pi}}\left[\log\mathbb{E}^{p_{h}}\left[\log e^{g_{\varepsilon}}\right]\right]\right\} \\
 & =\underline{\sup_{\varPhi\in\mathcal{D}_{+}\left(\mathcal{A}\right)}\left\{ \mathbb{E}^{M^{\pi}}\left[\varPhi\right]-\mathbb{E}^{M^{\pi}}\left[\log\mathbb{E}^{p_{h}}\left[e^{\varPhi}\right]\right]\right\} }.
\end{alignat*}
From Theorem~\ref{thm:VariFomla}, we have 
\begin{alignat*}{1}
 & \int_{\mathbb{R}^{d}}\mathsf{KL}\left(\hat{p}^{\pi}\left|p_{h}\right.\right)M^{\pi}\left(x;c\right)dx\\
= & \int_{\mathbb{R}^{d}}\sup_{g\in\mathcal{B}_{b}\left(\mathbb{R}^{d}\right)}\left\{ \mathbb{E}^{\hat{p}^{\pi}}\left[g\right]-\log\mathbb{E}^{p_{h}}\left[e^{g}\right]\right\} M^{\pi}\left(x;c\right)dx.
\end{alignat*}
Because $\hat{p}^{\pi}$ is an $x$ conditional probability of $M^{\pi}$,
\begin{alignat*}{1}
\mathbb{E}^{M^{\pi}}\left[\varPhi\right] & =\mathbb{E}^{M^{\pi}}\left[\mathbb{E}^{\hat{p}^{\pi}}\left[\varPhi\right]\right]\\
 & \leqq\mathbb{E}^{M^{\pi}}\left[\sup_{\varPhi\in\mathcal{B}_{b}\left(\mathbb{R}^{d}\right)}\left\{ \mathbb{E}^{\hat{p}^{\pi}}\left[\varPhi\right]-\log\mathbb{E}^{p_{h}}\left[e^{\varPhi}\right]\right\} \right]+\mathbb{E}^{M^{\pi}}\left[\log\mathbb{E}^{p_{h}}\left[e^{\varPhi}\right]\right]\\
 & =\mathbb{E}^{M^{\pi}}\left[\mathsf{KL}\left(\hat{p}^{\pi}\left|p_{h}\right.\right)\right]+\mathbb{E}^{M^{\pi}}\left[\log\mathbb{E}_{x}^{p_{h}}\left[e^{\varPhi}\right]\right]
\end{alignat*}
therefore, 
\[
\underline{\sup_{\varPhi\in\mathcal{D}_{+}\left(\mathcal{A}\right)}\left\{ \mathbb{E}^{M^{\pi}}\left[\varPhi\right]-\mathbb{E}^{M^{\pi}}\left[\log\mathbb{E}^{p_{h}}\left[e^{\varPhi}\right]\right]\right\} }\leqq\mathbb{E}^{M^{\pi}}\left[\mathsf{KL}\left(\hat{p}^{\pi}\left|p_{h}\right.\right)\right].
\]

Next, we show the following inequality. From Jensen's inequality,
\[
\underline{\left\{ \mathbb{E}^{M^{\pi}}\left[\varPhi\right]-\mathbb{E}^{M^{\pi}}\left[\log\mathbb{E}^{p_{h}}\left[e^{\varPhi}\right]\right]\right\} }\geqq\left\{ \mathbb{E}^{M^{\pi}}\left[\varPhi\right]-\log\mathbb{E}^{M^{\pi}}\mathbb{E}^{p_{h}}\left[e^{\varPhi}\right]\right\} .
\]
If we set $p_{M}\left(y\right)=\int p\left(h,x,y\right)M^{\pi}\left(x;c\right)dx$,
we can rewrite the above as 
\[
\underline{\left\{ \mathbb{E}^{M^{\pi}}\left[\varPhi\right]-\mathbb{E}^{M^{\pi}}\left[\log\mathbb{E}^{p_{h}}\left[e^{\varPhi}\right]\right]\right\} }\geqq\sup_{\varPhi\in\mathcal{B}_{b}\left(\mathbb{R}^{d}\right)}\left\{ \mathbb{E}^{M^{\pi}}\left[\varPhi\right]-\log\mathbb{E}^{p_{M}}\left[e^{\varPhi}\right]\right\} =\mathsf{KL}\left(M^{\pi}\left|p_{M}\right.\right).
\]
Then, from the following Lemma~\ref{lem:4.3}, we have 
\begin{equation}
\mathsf{KL}\left(M^{\pi}\left|p_{M}\right.\right)=\mathbb{E}^{M^{\pi}}\left[\mathsf{KL}\left(M^{\pi}\left(\cdot\left|x\right.\right)\left|p_{M}\left(\cdot\left|x\right.\right)\right.\right)\right].\label{eq:KL_conditional}
\end{equation}
Here, $M^{\pi}\left(\cdot\left|x\right.\right),p_{M}\left(\cdot\left|x\right.\right)$
is the transition probability of a stationary Markov process with
the staring point, $x$, and the stationary distributions are each
$M^{\pi}\left(\cdot\right),p_{M}\left(\cdot\right)$.

Further, since 
\[
\mathbb{E}^{M^{\pi}}\left[\mathsf{KL}\left(M^{\pi}\left(\cdot\left|x\right.\right)\left|p_{M}\left(\cdot\left|x\right.\right)\right.\right)\right]=\mathbb{E}^{M^{\pi}}\left[\mathsf{KL}\left(\hat{p}^{\pi}\left|p_{h}\right.\right)\right],
\]
Therefore, with Eq.~(\ref{eq:KL_conditional}), we show the following
inequality 
\[
\underline{\sup_{\varPhi\in\mathcal{B}_{b}\left(\mathbb{R}^{d}\right)}\left\{ \mathbb{E}^{M^{\pi}}\left[\varPhi\right]-\mathbb{E}^{M^{\pi}}\left[\log\mathbb{E}^{p_{h}}\left[e^{\varPhi}\right]\right]\right\} }\geqq\mathbb{E}^{M^{\pi}}\left[\mathsf{KL}\left(\hat{p}^{\pi}\left|p_{h}\right.\right)\right].
\]
Since $C_{b}\left(\mathbb{R}^{n}\right)\subset\mathcal{D}_{+}\left(\mathcal{A}\right)\subset\mathcal{B}_{b}\left(\mathbb{R}^{d}\right)$,
from Theorem~\ref{thm:VariFomla}, $\sup$ is the same under $\mathcal{D}_{+}\left(\mathcal{A}\right)$
or $\mathcal{B}_{b}\left(\mathbb{R}^{d}\right)$. \end{proof}

\begin{lemma}\label{lem:4.3} 
\begin{alignat*}{1}
\mathsf{KL}\left(M^{\pi}\left|p_{M}\right.\right) & =\mathbb{E}^{M^{\pi}}\left[\mathsf{KL}\left(M^{\pi}\left(\cdot\left|x\right.\right)\left|p_{M}\left(\cdot\left|x\right.\right)\right.\right)\right].
\end{alignat*}
\end{lemma}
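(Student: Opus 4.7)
The plan is to recognize Lemma~\ref{lem:4.3} as the chain rule (disintegration identity) for Kullback--Leibler divergence applied to two joint laws on $(x,y)$ that share a common $x$-marginal. First I would make explicit that, in this step of the proof, $M^{\pi}$ and $p_{M}$ should be interpreted as the joint distributions
\[
M^{\pi}(dx,dy) = M^{\pi}(x;c)\,\hat{p}^{\pi}(y\mid x)\,dx\,dy, \qquad p_{M}(dx,dy) = M^{\pi}(x;c)\,p_{h}(y\mid x)\,dx\,dy,
\]
so that the conditionals $M^{\pi}(\cdot\mid x) = \hat{p}^{\pi}(\cdot\mid x)$ and $p_{M}(\cdot\mid x) = p_{h}(\cdot\mid x)$ agree with the interpretation used in the line immediately following the lemma. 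Both joints have $x$-marginal equal to $M^{\pi}(\cdot;c)$: for the $M^{\pi}$ joint this is the invariance relation Eq.~(\ref{eq:Marginal}), while for $p_{M}$ it is built directly into the definition of the joint.

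Next, I would invoke the standard chain rule for KL divergence: for any joint distributions $P = P_{X} P_{Y\mid X}$ and $Q = Q_{X} Q_{Y\mid X}$ on a product space,
\[
\mathsf{KL}(P \mid Q) = \mathsf{KL}(P_{X} \mid Q_{X}) + \mathbb{E}^{P_{X}}\!\bigl[\mathsf{KL}(P_{Y\mid X} \mid Q_{Y\mid X})\bigr],
\]
which follows at once from the factorization $\log(dP/dQ) = \log(dP_{X}/dQ_{X}) + \log(dP_{Y\mid X}/dQ_{Y\mid X})$ and integration under $P$. Applying this with $P = M^{\pi}$ and $Q = p_{M}$ kills the first term, since $P_{X} = Q_{X} = M^{\pi}(\cdot;c)$, and leaves exactly the right-hand side of the lemma.

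I do not expect any real obstacle. The only small technical matter is the absolute continuity required to set up the conditional densities for the disintegration, but this is immediate here because $\hat{p}^{\pi}$, $p_{h}$, and $M^{\pi}$ all admit densities against Lebesgue measure on $\mathbb{R}^{d}$. Thus the lemma is essentially the common-marginal special case of the KL chain rule, dressed up in the notation of stationary Markov chains.
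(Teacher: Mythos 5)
Your proposal is correct, and it identifies exactly the content of the lemma: the KL chain rule for two joint laws with a common $x$-marginal, with the marginal term vanishing. The paper arrives at the same decomposition, Eq.~\eqref{eq:KL_iteration}, but by a more elaborate route: it first proves the general conditional decomposition with respect to a sub-$\sigma$-algebra $\mathscr{B}_{1}$ as a two-sided inequality, obtaining the direction $\mathsf{KL}\left(M^{\pi}\left|p_{M}\right.\right)\leqq\mathsf{KL}_{\mathscr{B}_{1}}+\mathbb{E}^{M^{\pi}}\left[\mathsf{KL}\left(\cdot\left|\cdot\right.\right)\right]$ from the Donsker--Varadhan variational formula (Theorem~\ref{thm:VariFomla}) with test functions $\varPhi$ and Jensen's inequality, and the reverse direction from precisely the Radon--Nikodym factorization $g=\left.\frac{dM^{\pi}}{dp_{M}}\right|_{\mathscr{B}_{1}}\cdot\frac{d\hat{p}^{\pi}\left(\cdot\left|\mathscr{B}_{1}\right.\right)}{dp_{h}\left(\cdot\left|\mathscr{B}_{1}\right.\right)}$ that you invoke; it then specializes $\mathscr{B}_{1}=\sigma\left(x\right)$. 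Your one-line argument is essentially the second half of the paper's proof promoted to a complete proof, and it is legitimate here: because the $x$-marginals coincide, $\log\left(dP_{X}/dQ_{X}\right)=0$ a.e., so no indeterminate $\infty-\infty$ cancellation can occur and the handling of the infinite case is automatic (both sides are $+\infty$ exactly when the conditional absolute continuity fails on a set of positive $M^{\pi}$-measure). What the paper's longer route buys is consistency with the variational (supremum over test functions) definition of $\mathsf{KL}$ used throughout Appendix~\ref{app:Proof-of-Lemma_rate-KL}, so that the lemma plugs directly into the surrounding sup-based inequalities; what your route buys is brevity and transparency. One point you should still make explicit is that the common marginal of the $M^{\pi}$-joint is indeed $M^{\pi}\left(\cdot;c\right)$, which is the invariance relation Eq.~\eqref{eq:Marginal} — you do cite it, so the argument is complete.
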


\begin{proof} By definition, $\hat{p}^{\pi}$ is the $x$ conditional
probability of $M^{\pi}$. Therefore, 
\[
M^{\pi}\left(y\right)=\int\hat{p}^{\pi}\left(y\left|x\right.\right)M^{\pi}\left(x\right)dx
\]
and, similarly for $p_{M}$, the $x$ conditional distribution is
\[
p_{M}\left(y\right)=\int\hat{p}_{h}\left(y\left|x\right.\right)M^{\pi}\left(x\right)dx.
\]

$\left(\mathbb{R}^{d},\mathscr{B}\left(\mathbb{R}^{d}\right)\right)$
is a Polish space. Let $\mathscr{B}_{1}\subset\mathscr{B}\left(\mathbb{R}^{d}\right)$
be the sub $\sigma$-algebra of $\mathscr{B}\left(\mathbb{R}^{d}\right)$.
The domain of definition of $g$ in \ref{thm:VariFomla} was the measurable
space, $\left(\mathbb{R}^{d},\mathscr{B}\left(\mathbb{R}^{d}\right)\right)$,
but even if we restrict the measurable space to $\left(\mathbb{R}^{d},\mathscr{B}_{1}\right)$,
we can define $\mathsf{KL}$. This restricted $\mathsf{KL}$ is denoted
as $\mathsf{KL}_{\mathscr{B}_{1}}$. Then, we will show that 
\begin{alignat}{1}
\mathsf{KL}\left(M^{\pi}\left|p_{M}\right.\right) & =\mathsf{KL}_{\mathscr{B}_{1}}\left(M^{\pi}\left|p_{M}\right.\right)+\mathbb{E}^{M^{\pi}}\left[\mathsf{KL}\left(M^{\pi}\left(\cdot\left|\mathscr{B}_{1}\right.\right)\left|p_{M}\left(\cdot\left|\mathscr{B}_{1}\right.\right)\right.\right)\right]\label{eq:KL_iteration}
\end{alignat}
holds.

When $\mathsf{KL}_{\mathscr{B}_{1}}\left(M^{\pi}\left|p_{M}\right.\right)=\infty$,
we have, $\mathsf{KL}\geqq\mathsf{KL}_{\mathscr{B}_{1}}$, therefore
$\mathsf{KL}\left(M^{\pi}\left|p_{M}\right.\right)=\infty$.

When $\mathsf{KL}_{\mathscr{B}_{1}}\left(M^{\pi}\left|p_{M}\right.\right)<\infty$,
$\mathsf{KL}\left(M^{\pi}\left|p_{M}\right.\right)<\infty$ does not
necessarily hold, though if we show 
\[
\mathsf{KL}\left(M^{\pi}\left|p_{M}\right.\right)\leqq\mathbb{E}^{M^{\pi}}\left[\mathsf{KL}\left(\hat{p}^{\pi}\left|p_{h}\right.\right)\right]+\mathsf{KL}_{\mathscr{B}_{1}}\left(M^{\pi}\left|p_{M}\right.\right)
\]
when $\mathsf{KL}\left(M^{\pi}\left|p_{M}\right.\right)<\infty$ (given
$\mathsf{KL}\geqq\mathsf{KL}_{\mathscr{B}_{1}}$ it follows automatically
that $\mathsf{KL}_{\mathscr{B}_{1}}\left(M^{\pi}\left|p_{M}\right.\right)<\infty$).
Then, if we show 
\[
\mathsf{KL}\left(M^{\pi}\left|p_{M}\right.\right)=\mathbb{E}^{M^{\pi}}\left[\mathsf{KL}\left(\hat{p}^{\pi}\left|p_{h}\right.\right)\right]+\mathsf{KL}_{\mathscr{B}_{1}}\left(M^{\pi}\left|p_{M}\right.\right)
\]
the proof is complete.

Since we have, 
\begin{alignat*}{1}
\mathbb{E}^{M^{\pi}}\left[\varPhi\right] & =\mathbb{E}^{M^{\pi}}\left[\mathbb{E}^{\hat{p}^{\pi}}\left[\varPhi\right]\right]\\
 & \leqq\mathbb{E}^{M^{\pi}}\left[\sup_{\varPhi\in\mathcal{B}_{b}\left(\mathbb{R}^{d}\right)}\left\{ \mathbb{E}^{\hat{p}^{\pi}}\left[\varPhi\right]-\log\mathbb{E}^{p_{h}}\left[e^{\varPhi}\right]\right\} \right]+\mathbb{E}^{M^{\pi}}\left[\log\mathbb{E}^{p_{h}}\left[e^{\varPhi}\right]\right]\\
 & =\mathbb{E}^{M^{\pi}}\left[\mathsf{KL}\left(\hat{p}^{\pi}\left|p_{h}\right.\right)\right]+\mathbb{E}^{M^{\pi}}\left[\log\mathbb{E}^{p_{h}}\left[e^{\varPhi}\right]\right].
\end{alignat*}
If we set, $\psi=\log\mathbb{E}^{p_{h}}\left[e^{\varPhi}\right]$,
we have $\psi\in\mathscr{B}\left(\left\{ x\right\} \right)$. Then,
\begin{alignat*}{1}
 & \mathbb{E}^{M^{\pi}}\left[\mathsf{KL}\left(\hat{p}^{\pi}\left|p_{h}\right.\right)\right]+\mathbb{E}^{M^{\pi}}\left[\psi\right]\\
\leqq & \mathbb{E}^{M^{\pi}}\left[\mathsf{KL}\left(\hat{p}^{\pi}\left|p_{h}\right.\right)\right]+\mathsf{KL}_{\mathscr{B}_{1}}\left(\hat{p}^{\pi}\left|p_{h}\right.\right)+\log\mathbb{E}^{M^{\pi}}\left[\mathbb{E}^{p_{h}}\left[e^{\varPhi}\right]\right]\\
= & \mathbb{E}^{M^{\pi}}\left[\mathsf{KL}\left(\hat{p}^{\pi}\left|p_{h}\right.\right)\right]+\mathsf{KL}_{\mathscr{B}_{1}}\left(\hat{p}^{\pi}\left|p_{h}\right.\right)+\log\mathbb{E}^{p_{M}}\left[e^{\varPhi}\right].
\end{alignat*}
From this, we have, 
\[
\mathbb{E}^{M^{\pi}}\left[\varPhi\right]-\log\mathbb{E}^{p_{M}}\left[e^{\varPhi}\right]\leqq\mathbb{E}^{M^{\pi}}\left[\mathsf{KL}\left(\hat{p}^{\pi}\left|p_{h}\right.\right)\right]+\mathsf{KL}_{\mathscr{B}_{1}}\left(\hat{p}^{\pi}\left|p_{h}\right.\right),
\]
therefore, 
\[
\sup_{\varPhi}\left\{ \mathbb{E}^{M^{\pi}}\left[\varPhi\right]-\log\mathbb{E}^{p_{M}}\left[e^{\varPhi}\right]\right\} =\mathsf{KL}\left(M^{\pi}\left|p_{M}\right.\right)\leqq\mathbb{E}^{M^{\pi}}\left[\mathsf{KL}\left(\hat{p}^{\pi}\left|p_{h}\right.\right)\right]+\mathsf{KL}_{\mathscr{B}_{1}}\left(\hat{p}^{\pi}\left|p_{h}\right.\right).
\]

Now we show the other direction of the inequality. If $\mathsf{KL}\left(M^{\pi}\left|p_{M}\right.\right)<\infty$,
then $M^{\pi}\ll p_{M}$, therefore we set $g=\frac{dM^{\pi}}{dp_{M}}$.
Similarly, regarding the $x$ conditional probability is $\hat{p}^{\pi}\ll p_{h}$,
therefore 
\[
g\left(\omega\right)=\left.\frac{dM^{\pi}}{dp_{M}}\right|_{\mathscr{B}_{1}}\left(\omega\right)\frac{d\hat{p}^{\pi}\left(\cdot\left|\mathscr{B}_{1}\right.\right)}{dp_{h}\left(\cdot\left|\mathscr{B}_{1}\right.\right)}\left(\omega\right)
\]
holds. Here, $\left.\frac{dM^{\pi}}{dp_{M}}\right|_{\mathscr{B}_{1}}$
is the likelihood ratio, $\frac{dM^{\pi}}{dp_{M}}$, where the domain
of definition is restricted to $\mathscr{B}_{1}$. Then, 
\begin{alignat*}{1}
\mathsf{KL}\left(M^{\pi}\left|p_{M}\right.\right)=\mathbb{E}^{M^{\pi}}\left[\log g\right] & =\mathbb{E}^{M^{\pi}}\left[\log\left.\frac{dM^{\pi}}{dp_{M}}\right|_{\mathscr{B}_{1}}\right]+\mathbb{E}^{M^{\pi}}\left[\log\frac{d\hat{p}^{\pi}\left(\cdot\left|\mathscr{B}_{1}\right.\right)}{dp_{h}\left(\cdot\left|\mathscr{B}_{1}\right.\right)}\right]\\
 & =\mathbb{E}^{M^{\pi}}\left[\log\left.\frac{dM^{\pi}}{dp_{M}}\right|_{\mathscr{B}_{1}}\right]+\mathbb{E}^{M^{\pi}}\left[\mathbb{E}^{p_{h}}\left[\log\frac{d\hat{p}^{\pi}\left(\cdot\left|\mathscr{B}_{1}\right.\right)}{dp_{h}\left(\cdot\left|\mathscr{B}_{1}\right.\right)}\right]\right]\\
 & =\mathbb{E}^{M^{\pi}}\left[\log\left.\frac{dM^{\pi}}{dp_{M}}\right|_{\mathscr{B}_{1}}\right]+\mathbb{E}^{M^{\pi}}\left[\mathsf{KL}\left(\hat{p}^{\pi}\left(\cdot\left|\mathscr{B}_{1}\right.\right)\left|p_{h}\left(\cdot\left|\mathscr{B}_{1}\right.\right)\right.\right)\right]\\
 & =\mathsf{KL}_{\mathscr{B}_{1}}\left(M^{\pi}\left|p_{M}\right.\right)+\mathbb{E}^{M^{\pi}}\left[\mathsf{KL}\left(\hat{p}^{\pi}\left(\cdot\left|\mathscr{B}_{1}\right.\right)\left|p_{h}\left(\cdot\left|\mathscr{B}_{1}\right.\right)\right.\right)\right].
\end{alignat*}
Therefor, we have shown \eqref{thm:VariFomla}.

For \eqref{thm:VariFomla}, if we set $\mathscr{B}_{1}=\left\{ x\right\} $,
we have our final result.

\end{proof}

\section{Proof of Theorem~\ref{prop:rate functional}\label{app: rate functional}}

\begin{lemma}\label{Lemma211} \citep[][Lemma~2.11.]{Jain-Krylov_08}
Let $\mu$ be a probability measure on $\mathbb{R}^{d}$, we assume
$\mu\ll\mathsf{m}$, $h>0$, $u_{\varepsilon}=u+\varepsilon,\left(\varepsilon>0\right)$,
and $u\in\mathcal{B}_{b}^{+}\left(\mathbb{R}^{d}\right)$. Then, 
\[
\inf_{v\in D_{0},\varepsilon>0}\int\log\frac{p_{h}v_{\varepsilon}}{v_{\varepsilon}}d\mu=\inf_{v\in D_{1},\varepsilon>0}\int\log\frac{p_{h}v_{\varepsilon}}{v_{\varepsilon}}d\mu=\inf_{u\in\mathcal{B}_{b}^{+}\left(\mathbb{R}^{d}\right),\varepsilon>0}\int\log\frac{p_{h}u_{\varepsilon}}{u_{\varepsilon}}d\mu.
\]
Where the function spaces are defined as follows: 
\begin{alignat*}{1}
D & =\left\{ u\in\mathcal{B}_{b}^{+}\left(\mathbb{R}^{d}\right)\left|\int_{\mathbb{R}^{d}}ud\mathsf{m}<\infty\right.\right\} \\
D_{0} & =\left\{ v\left|v=\frac{1}{t}\int_{0}^{t}p_{s}uds,\textrm{ for some }u\in D,\textrm{ some }t>0\right.\right\} \\
D_{1} & =\left\{ p_{h}v\left|v\in D_{0},h\geqq0\right.\right\} .
\end{alignat*}
\end{lemma}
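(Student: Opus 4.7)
The plan is to prove the chain
$$
\inf_{v \in D_0, \varepsilon > 0} \phi(v) \;=\; \inf_{v \in D_1, \varepsilon > 0} \phi(v) \;=\; \inf_{u \in \mathcal{B}_b^+, \varepsilon > 0} \phi(u), \qquad \phi(w) := \int \log \frac{p_h w_\varepsilon}{w_\varepsilon}\, d\mu,
$$
by combining trivial set inclusions with a nontrivial approximation argument. First, note $D_0 \subset D_1$ (take the bound variable $h = 0$ in the definition of $D_1$) and $D_1 \subset \mathcal{B}_b^+(\mathbb{R}^d)$, so the infimum is monotone decreasing under these inclusions, yielding $\inf_{\mathcal{B}_b^+, \varepsilon}\phi \leq \inf_{D_1, \varepsilon}\phi \leq \inf_{D_0, \varepsilon}\phi$ for free.

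The substantive step is to close the chain with the reverse inequality $\inf_{D_0, \varepsilon}\phi \leq \inf_{\mathcal{B}_b^+, \varepsilon}\phi$. Given $u \in \mathcal{B}_b^+$ and $\varepsilon > 0$, I would approximate $u$ from within $D_0$ by combining spatial truncation with Cesaro averaging along the semigroup. First set $u^N := u \cdot \mathbf{1}_{B_N}$, with $B_N$ the ball of radius $N$; since $u$ is bounded, $u^N$ is bounded and integrable, so $u^N \in D$. Then form the Cesaro average $v_{N, t} := \frac{1}{t}\int_0^t p_s u^N\, ds \in D_0$ and claim $\phi(v_{N, t}) \to \phi(u)$ along a diagonal subsequence $(N, t) \to (\infty, 0^+)$, which closes the chain upon taking infima.

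For the convergence, the boundedness $0 \leq v_{N, t} \leq \|u\|_\infty$ together with $L^\infty$-contractivity of $p_h$ yields the uniform envelope
$$
\left| \log \frac{p_h v_{N, t} + \varepsilon}{v_{N, t} + \varepsilon} \right| \leq \log \frac{\|u\|_\infty + \varepsilon}{\varepsilon},
$$
which is finite since $\varepsilon > 0$ and $\mu$ is a probability measure; this supplies the dominating function for Lebesgue's dominated convergence theorem. The convergence $v_{N, t} \to u^N$ as $t \downarrow 0$ follows from strong continuity of $\{p_s\}$ on $L^2(\mathbb{R}^d; \mathsf{m})$, passing to a subsequence for $\mathsf{m}$-a.e.\ convergence, while $u^N \uparrow u$ as $N \to \infty$ is monotone and yields $p_h u^N \uparrow p_h u$ similarly by monotone convergence against the kernel $p_h$.

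The main obstacle is the upgrade from $L^2(\mathsf{m})$-strong continuity of the semigroup to $\mu$-a.e.\ convergence of the Cesaro averages, since dominated convergence is applied against $\mu$ rather than $\mathsf{m}$. The hypothesis $\mu \ll \mathsf{m}$ is the crucial bridge: it transfers $\mathsf{m}$-a.e.\ statements to $\mu$-a.e.\ ones. For the symmetric convolution semigroups considered in this paper (Gaussian, Cauchy, $\alpha$-stable), Lebesgue's differentiation theorem together with the explicit form of the transition densities supplies the requisite pointwise Cesaro convergence, and absolute continuity of $\mu$ then permits passage to the limit under the $\mu$-integral, completing the proof.
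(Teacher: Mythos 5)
The paper does not actually prove this statement: it is imported verbatim as Lemma~2.11 of the cited reference (Jain--Krylov), so there is no in-paper argument to compare against. Judged on its own terms, your proposal is essentially correct and follows the natural route. The sandwich $\inf_{\mathcal{B}_b^+}\le\inf_{D_1}\le\inf_{D_0}$ from the inclusions $D_0\subset D_1\subset\mathcal{B}_b^+\left(\mathbb{R}^d\right)$ is right (taking $h=0$ in the definition of $D_1$), and the substantive reverse inequality via truncation $u^N=u\mathbf{1}_{B_N}\in D$ followed by Ces\`aro averaging $v_{N,t}=\frac1t\int_0^t p_su^N\,ds\in D_0$, with the uniform envelope $\log\frac{\left\Vert u\right\Vert_\infty+\varepsilon}{\varepsilon}$ justifying dominated convergence against the probability measure $\mu$, closes the chain. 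Two loose ends you should tighten. First, you need convergence under the $\mu$-integral of \emph{both} $v_{N,t}$ and $p_hv_{N,t}$; the latter is not automatic from the former, but follows either because $p_h$ is an $L^2$-contraction (so you may extract a further a.e.-convergent subsequence), or more directly because $p_h\left(\cdot\left|x\right.\right)\ll\mathsf{m}$ for every $x$, so bounded $\mathsf{m}$-a.e.\ convergence of $v_{N,t_k}$ gives pointwise convergence of $p_hv_{N,t_k}\left(x\right)$ by dominated convergence inside the kernel. Second, your final paragraph needlessly retreats to the specific Gaussian/Cauchy/stable semigroups: since $u^N$ is bounded with compact support it lies in $L^2\left(\mathbb{R}^d;\mathsf{m}\right)$, strong continuity gives $\frac1t\int_0^tp_su^N\,ds\rightarrow u^N$ in $L^2\left(\mathbb{R}^d;\mathsf{m}\right)$, and subsequence extraction yields $\mathsf{m}$-a.e.\ (hence, by $\mu\ll\mathsf{m}$, $\mu$-a.e.) convergence in full generality, so the appeal to Lebesgue differentiation and explicit transition densities is superfluous. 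With those repairs the argument is complete and supplies a self-contained proof of a fact the paper only cites.
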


\begin{lemma} \citep[][Lemma~2.15.]{Jain-Krylov_08}

When $\mu\ll\mathsf{m}$, 
\[
\lim_{h\downarrow0}\frac{1}{h}\left\{ -\inf_{u\in\mathcal{B}_{b}^{+}\left(\mathbb{R}^{d}\right),\varepsilon>0}\int\log\frac{p_{h}u_{\varepsilon}}{u_{\varepsilon}}d\mu\right\} =I\left(\mu\right).
\]
\end{lemma}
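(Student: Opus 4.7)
The plan is to prove the stated equality via matching $\liminf$ and $\limsup$ bounds, using the elementary sandwich $(y-1)/y \le \log y \le y-1$ (valid for $y>0$) applied to $y=p_h u_\varepsilon / u_\varepsilon$. Pointwise on $\mathbb{R}^d$ this yields
\[
\frac{u_\varepsilon - p_h u_\varepsilon}{u_\varepsilon} \;\le\; -\log\frac{p_h u_\varepsilon}{u_\varepsilon} \;\le\; \frac{u_\varepsilon - p_h u_\varepsilon}{p_h u_\varepsilon},
\]
and both outer quantities, after division by $h$ and $h\downarrow 0$, converge to $-\mathcal{A}u_\varepsilon/u_\varepsilon$: the uniform lower bound $u_\varepsilon \ge \varepsilon$ supplies dominated convergence, while $h^{-1}(p_h u_\varepsilon - u_\varepsilon) \to \mathcal{A}u_\varepsilon$ whenever $u_\varepsilon \in D(\mathcal{A})$.

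For the lower bound, fix any admissible $(u,\varepsilon)$ with $u_\varepsilon \in D(\mathcal{A})$, integrate the left inequality against $\mu$, divide by $h$, and pass to $\liminf_{h\downarrow 0}$:
\[
\liminf_{h\downarrow 0}\frac{1}{h}\Bigl[-\!\int\log\frac{p_h u_\varepsilon}{u_\varepsilon}\,d\mu\Bigr] \;\ge\; \int\frac{-\mathcal{A}u_\varepsilon}{u_\varepsilon}\,d\mu.
\]
The left-hand side dominates the corresponding expression with $\inf_{u,\varepsilon}$ inserted, so taking $\sup$ on the right gives $\liminf \ge I(\mu)$.

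The upper bound $\limsup \le I(\mu)$ is the crux, since it requires interchanging $\limsup_{h\downarrow 0}$ with $\sup_{u,\varepsilon}$. The key device is Lemma~\ref{Lemma211}, which replaces $\inf_{u\in\mathcal{B}_b^+}$ by $\inf_{u\in D_1}$, where $D_1=\{p_s v : v\in D_0,\, s\ge 0\}$ consists of semigroup-smoothed functions. For $u=p_s v\in D_1$, one has $u_\varepsilon \in D(\mathcal{A})$, and the Taylor remainder $p_h u_\varepsilon - u_\varepsilon - h\mathcal{A}u_\varepsilon$ can be controlled uniformly along the semigroup flow $s\mapsto p_s v$. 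Applying the right half of the sandwich and sending $h\downarrow 0$ pointwise in $u\in D_1$ gives
\[
\limsup_{h\downarrow 0}\frac{1}{h}\Bigl[-\!\int\log\frac{p_h u_\varepsilon}{u_\varepsilon}\,d\mu\Bigr] \;\le\; \int\frac{-\mathcal{A}u_\varepsilon}{u_\varepsilon}\,d\mu \;\le\; I(\mu),
\]
and the supremum over $D_1$ then passes inside the $\limsup$ thanks to the uniform remainder control on $D_1$-families.

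The main obstacle will be securing this uniformity, which fails outside $D_1$ because generic elements of $\mathcal{B}_b^+$ need not lie in $D(\mathcal{A})$ and the quadratic remainder can blow up when divided by $h$ inside the supremum. Lemma~\ref{Lemma211} is precisely the device that collapses the optimization to $D_1$ without changing the value of the infimum, thereby reconciling the two inequalities and yielding the claim.
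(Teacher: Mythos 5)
Your lower bound ($\liminf \geqq I\left(\mu\right)$) is sound: fixing $\left(u,\varepsilon\right)$, using $\log y\leqq y-1$, and passing to the limit before taking the supremum is essentially the paper's first step (the paper uses a Taylor expansion with an $O\left(h^{2}\right)$ remainder on $D_{1}$; your one-sided elementary inequality is if anything cleaner here, since no remainder control is needed for a bound that goes the right way). The problem is the upper bound, which you correctly identify as the crux and then do not actually close. Your final claim --- that ``the supremum over $D_{1}$ then passes inside the $\limsup$ thanks to the uniform remainder control on $D_{1}$-families'' --- is not justified and is false as stated. For $v\in D_{1}$ one has $\frac{1}{h}\left|p_{h}v-v\right|\leqq c_{v}$ with $c_{v}$ depending on $v$ (through $t$ and $\left\Vert u\right\Vert _{\infty}$ in the representation $v=\frac{1}{t}\int_{0}^{t}p_{s}u\,ds$), and the discrepancy between your right-hand comparison quantity and the first-order term is
\[
\frac{1}{h}\left(\frac{v-p_{h}v}{p_{h}v+\varepsilon}-\frac{v-p_{h}v}{v+\varepsilon}\right)=\frac{1}{h}\cdot\frac{\left(v-p_{h}v\right)^{2}}{\left(p_{h}v+\varepsilon\right)\left(v+\varepsilon\right)}\leqq\frac{h\,c_{v}^{2}}{\varepsilon^{2}},
\]
which tends to $0$ for each fixed $\left(v,\varepsilon\right)$ but is \emph{not} uniform over the supremum, since $\varepsilon$ ranges over all of $\left(0,\infty\right)$ and $c_{v}$ is unbounded over $D_{1}$. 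Passing to $D_{1}$ via Lemma~\ref{Lemma211} does not by itself license the interchange of $\limsup_{h\downarrow0}$ with $\sup_{v,\varepsilon}$; that interchange is precisely the missing step.

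The paper closes this gap by a different device that avoids any interchange of limits: for $v\in D_{0}$ set $\varphi\left(h\right)=\int\log\frac{p_{h}v_{\varepsilon}}{v_{\varepsilon}}d\mu$ and differentiate in $h$, obtaining $\frac{d\varphi}{dh}=\int\frac{\mathcal{A}p_{h}v}{p_{h}v_{\varepsilon}}d\mu\geqq\inf_{w\in D_{1},\varepsilon>0}\int\frac{\mathcal{A}w}{w_{\varepsilon}}d\mu=-I\left(\mu\right)$ for \emph{every} $h$, because $p_{h}v$ itself stays in the admissible class $D_{1}$. Integrating from $0$ to $h$ with $\varphi\left(0\right)=0$ gives the exact inequality $-\varphi\left(h\right)\leqq h\,I\left(\mu\right)$ valid for all $h>0$ and all admissible $\left(v,\varepsilon\right)$ simultaneously; taking the supremum and then $h\downarrow0$ is then immediate. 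To repair your argument you should replace the asserted uniformity with this integration-of-the-derivative step (or supply a genuinely uniform quantitative remainder bound, which the sandwich inequality alone does not provide).
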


\begin{proof} If $v\in D_{1}$, then there exists $c>0$, such that
\[
\frac{1}{h}\left|p_{h}v-v\right|\leqq c,
\]
for $^{\forall}h>0$. From this if we set $v_{\varepsilon}=v+\varepsilon$,
we have 
\[
\log p_{h}v_{\varepsilon}=\log v_{\varepsilon}+\left(p_{h}v_{\varepsilon}-v_{\varepsilon}\right)\cdot\frac{1}{v_{\varepsilon}}+O\left(h^{2}\right).
\]
Here, $O\left(h^{2}\right)$ is only dependent on $\varepsilon$.
When $h\rightarrow0$, we have $\frac{1}{h}\left(p_{h}v-v\right)\rightarrow\mathcal{A}v$
in terms of $L^{2}\left(\mathbb{R}^{d};\mathsf{m}\right)$, therefore
it is bounded in terms of measure, $\mu\textrm{-a.s.}$. For $^{\forall}v\in D_{1}$,
we have 
\[
\limsup_{h\rightarrow0}\frac{1}{h}\inf_{v\in D_{1},\varepsilon>0}\int\log\frac{p_{h}v_{\varepsilon}}{v_{\varepsilon}}d\mu\leqq\int\frac{\mathcal{A}v}{v_{\varepsilon}}d\mu,
\]
thus, from Lemma~\ref{Lemma211}, we have 
\begin{equation}
\liminf_{h\rightarrow0}\frac{1}{h}\left\{ -\inf_{u\in\mathcal{B}_{b}^{+}\left(\mathbb{R}^{d}\right),\varepsilon>0}\int\log\frac{p_{h}u_{\varepsilon}}{u_{\varepsilon}}d\mu\right\} \geqq I\left(\mu\right).\label{eq:Jain-Krylov_2.17}
\end{equation}

Next, we show the reverse inequality. For $v\in D_{0}$, we set 
\[
\varphi\left(h\right)=\int\log\frac{p_{h}v_{\varepsilon}}{v_{\varepsilon}}d\mu.
\]
Thus, $v_{\varepsilon}=v+\varepsilon$. When $v\in\mathcal{D}\left(\mathcal{A}\right)$,
we have 
\[
\frac{d\varphi}{dh}=\int\frac{\mathcal{A}p_{h}v}{p_{h}v_{\varepsilon}}d\mu\geqq\inf_{v\in D_{1},\varepsilon>0}\int\frac{\mathcal{A}v}{v_{\varepsilon}}d\mu=-I\left(\mu\right).
\]
Regarding $h$, if we integrate from $0$ to $h$, if we use $\varphi\left(0\right)=0$,
we have

\[
\varphi\left(h\right)=\int_{0}^{h}\frac{d\varphi}{dh}dh\geqq=-\int_{0}^{h}I\left(\mu\right)dh=-hI\left(\mu\right),
\]
for $^{\forall}v\in D_{1},\varepsilon>0,h>0$. From \ref{eq:Jain-Krylov_2.17}
and Lemma~\ref{Lemma211}, Q.E.D. \end{proof}

\section{Proof of Theorem~\ref{prop:rate variational}\label{app:rate variational}}

Define $I_{\mathcal{E}}\left(\mu\right)$ as 
\[
I_{\mathcal{E}}\left(\mu\right)=\begin{cases}
\mathcal{E}\left(\sqrt{f},\sqrt{f}\right), & \mu=f\cdot\mathsf{m},\ \sqrt{f}\in\mathcal{F}\\
\infty,
\end{cases}
\]
the Donsker-Varadhan $I$-function as 
\[
I\left(\mu\right)=-\inf_{u\in\mathcal{D}_{+}\left(\mathcal{A}\right),\varepsilon>0}\int_{\mathbb{R}^{d}}\frac{\mathcal{A}u}{u+\varepsilon}d\mu,
\]
the function $I_{\alpha},\alpha>0$ as 
\[
I_{\alpha}\left(\mu\right)=-\inf_{u\in b\mathcal{B}_{+}\left(\mathbb{R}^{d}\right),\varepsilon>0}\int_{\mathbb{R}^{d}}\log\left(\frac{\alpha R_{\alpha}u+\varepsilon}{u+\varepsilon}\right)d\mu.
\]

\begin{lemma} \label{lem:,7.1.2} Let $\mu\in\mathcal{P}$, then
\begin{alignat*}{1}
\underbrace{I_{\alpha}\left(\mu\right)} & \leqq\underbrace{\frac{I\left(\mu\right)}{\alpha}}.\\
-\inf_{u\in b\mathcal{B}_{+}\left(\mathbb{R}^{d}\right),\varepsilon>0}\int_{\mathbb{R}^{d}}\log\left(\frac{\alpha R_{\alpha}u+\varepsilon}{u+\varepsilon}\right)d\mu & \leqq-\frac{1}{\alpha}\inf_{u\in\mathcal{D}_{+}\left(\mathcal{A}\right),\varepsilon>0}\int_{\mathbb{R}^{d}}\frac{\mathcal{A}u}{u+\varepsilon}d\mu
\end{alignat*}
\begin{proof} For $u=R_{\alpha}f\in\mathcal{D}_{+}\left(\mathcal{A}\right)$
and $\varepsilon>0$, let 
\[
\phi\left(\alpha\right)=-\int_{\mathbb{R}^{d}}\log\left(\frac{\alpha R_{\alpha}u+\varepsilon}{u+\varepsilon}\right)d\mu.
\]
From the resolvent equation, we have 
\[
\lim_{\beta\rightarrow\alpha}\frac{R_{\alpha}u-R_{\beta}u}{\alpha-\beta}=-\lim_{\beta\rightarrow\alpha}R_{\alpha}R_{\beta}u=-R_{\alpha}^{2}u,
\]
and 
\[
\frac{d\phi\left(a\right)}{d\alpha}=-\int_{\mathbb{R}^{d}}\frac{R_{\alpha}u-\alpha R_{\alpha}^{2}u}{\alpha R_{\alpha}u+\varepsilon}d\mu=\int_{\mathbb{R}^{d}}\frac{\mathcal{A}R_{\alpha}^{2}u}{\alpha R_{\alpha}u+\varepsilon}d\mu.
\]
Here, if we note that 
\begin{alignat*}{1}
\left(\alpha R_{\alpha}^{2}u-R_{\alpha}u\right)\left(\alpha^{2}R_{\alpha}^{2}u+\varepsilon\right)-\left(\alpha R_{\alpha}^{2}u-R_{\alpha}u\right)\left(\alpha R_{\alpha}^{2}u+\varepsilon\right)\\
=\alpha\left(\alpha R_{\alpha}^{2}u-R_{\alpha}u\right)^{2} & \geqq0
\end{alignat*}
the following inequality, 
\[
\frac{\alpha R_{\alpha}^{2}u-R_{\alpha}u}{\alpha R_{\alpha}u+\varepsilon}\geqq\frac{\alpha R_{\alpha}^{2}u-R_{\alpha}u}{\alpha R_{\alpha}^{2}u+\varepsilon}
\]
holds. Therefore, we derive 
\begin{alignat*}{1}
\int_{\mathbb{R}^{d}}\frac{\mathcal{A}R_{\alpha}^{2}u}{\alpha R_{\alpha}u+\varepsilon}d\mu & \geqq\int_{\mathbb{R}^{d}}\frac{\mathcal{A}R_{\alpha}^{2}u}{\alpha^{2}R_{\alpha}^{2}u+\varepsilon}d\mu=-\frac{1}{\alpha^{2}}\left(-\int_{\mathbb{R}^{d}}\frac{\mathcal{A}R_{\alpha}^{2}u}{R_{\alpha}^{2}u+\frac{\varepsilon}{\alpha^{2}}}d\mu\right)\\
 & \geqq-\frac{1}{\alpha^{2}}\underbrace{\inf_{u\in\mathcal{D}_{+}\left(\mathcal{A}\right),\varepsilon>0}\int_{\mathbb{R}^{d}}\frac{\mathcal{A}u}{u+\varepsilon}d\mu}_{I\left(\mu\right)}.
\end{alignat*}
From this, given $\lim_{\alpha\rightarrow\infty}\alpha R_{\alpha}u\left(x\right)=u\left(x\right)$,
noting that $\lim_{\alpha\rightarrow\infty}\phi\left(\alpha\right)=0$,
we have 
\[
-\phi\left(\alpha\right)=\int_{\alpha}^{\infty}\phi^{\prime}\left(\beta\right)d\beta\geqq-\int_{\alpha}^{\infty}\frac{1}{\beta^{2}}I\left(\mu\right)d\beta=-\frac{1}{\alpha}I\left(\mu\right)
\]
and 
\[
\phi\left(\infty\right)-\phi\left(\alpha\right)=\int_{\mathbb{R}^{d}}\log\left(\frac{\alpha R_{\alpha}u+\varepsilon}{u+\varepsilon}\right)d\mu\geqq\frac{1}{\alpha}\inf_{u\in\mathcal{D}_{+}\left(\mathcal{A}\right),\varepsilon>0}\int_{\mathbb{R}^{d}}\frac{\mathcal{A}u}{u+\varepsilon}d\mu,
\]
to show 
\[
-\inf_{u\in\mathcal{D}_{+}\left(\mathcal{A}\right),\varepsilon>0}\int_{\mathbb{R}^{d}}\log\left(\frac{\alpha R_{\alpha}u+\varepsilon}{u+\varepsilon}\right)d\mu\leqq\underbrace{-\frac{1}{\alpha}\inf_{u\in\mathcal{D}_{+}\left(\mathcal{A}\right),\varepsilon>0}\int_{\mathbb{R}^{d}}\frac{\mathcal{A}u}{u+\varepsilon}d\mu}_{\frac{I\left(\mu\right)}{\alpha}}.
\]

Next, we show 
\[
\inf_{u\in\mathcal{D}_{+}\left(\mathcal{A}\right)}\int_{\mathbb{R}^{d}}\log\left(\frac{\alpha R_{\alpha}u+\varepsilon}{u+\varepsilon}\right)d\mu=\inf_{u\in b\mathcal{B}_{+}\left(\mathbb{R}^{d}\right)}\int_{\mathbb{R}^{d}}\log\left(\frac{\alpha R_{\alpha}u+\varepsilon}{u+\varepsilon}\right)d\mu.
\]
For $g\in bC_{+}\left(\mathbb{R}^{d}\right)$, we have $\left\Vert \beta R_{\beta}f\right\Vert _{\infty}\leqq\left\Vert f\right\Vert _{\infty}$,
\[
0\leqq\beta R_{\beta}f\left(x\right)\rightarrow f\left(x\right),\left(\beta\rightarrow\infty\right)
\]
thus 
\begin{equation}
\int_{\mathbb{R}^{d}}\log\left(\frac{\alpha R_{\alpha}\left(\beta R_{\beta}f\right)+\varepsilon}{\beta R_{\beta}f+\varepsilon}\right)d\mu\overset{\beta\rightarrow\infty}{\rightarrow}\int_{\mathbb{R}^{d}}\log\left(\frac{\alpha R_{\alpha}f+\varepsilon}{f+\varepsilon}\right)d\mu\label{eq:7.1.8}
\end{equation}
holds. We define the measure, $\mu_{\alpha}$, as 
\[
\mu_{\alpha}\left(A\right)=\int_{\mathbb{R}^{d}}\alpha R_{\alpha}\left(x,A\right)d\mu\left(x\right),\ A\in\mathcal{B}\left(\mathbb{R}^{d}\right).
\]
For $v\in b\mathcal{B}_{+}\left(\mathbb{R}^{d}\right)$, we consider
a sequence of functions, $\left\{ g_{n}\right\} _{n=1}^{\infty}\subset bC_{+}\left(\mathbb{R}^{d}\right)\cap L^{2}\left(\mathbb{R}^{d};\mathsf{m}\right)$,
that satisfies, 
\[
\int_{\mathbb{R}^{d}}\left|v-g_{n}\right|d\left(\mu_{\alpha}+\mu\right)\rightarrow0,\ n\rightarrow\infty.
\]
Then, when $n\rightarrow\infty$, we have 
\[
\int_{\mathbb{R}^{d}}\left|\alpha R_{\alpha}v-\alpha R_{\alpha}g_{n}\right|d\mu\leqq\int_{\mathbb{R}^{d}}\alpha R_{\alpha}\left(\left|v-g_{n}\right|\right)d\mu=\int_{\mathbb{R}^{d}}\left|v-g_{n}\right|d\mu_{\alpha}\rightarrow0,
\]
thus the following holds: 
\begin{equation}
\int_{\mathbb{R}^{d}}\log\left(\frac{\alpha R_{\alpha}g_{n}+\varepsilon}{g_{n}+\varepsilon}\right)d\mu\overset{n\rightarrow\infty}{\rightarrow}\int_{\mathbb{R}^{d}}\log\left(\frac{\alpha R_{\alpha}v+\varepsilon}{v+\varepsilon}\right)d\mu.\label{eq:7.1.9}
\end{equation}
From Eq.~(\ref{eq:7.1.8}) and Eq.~(\ref{eq:7.1.9}), we have 
\[
\inf_{u\in\mathcal{D}_{+}\left(\mathcal{A}\right)}\int_{\mathbb{R}^{d}}\log\left(\frac{\alpha R_{\alpha}u+\varepsilon}{u+\varepsilon}\right)d\mu=\inf_{u\in b\mathcal{B}_{+}\left(\mathbb{R}^{d}\right)}\int_{\mathbb{R}^{d}}\log\left(\frac{\alpha R_{\alpha}u+\varepsilon}{u+\varepsilon}\right)d\mu.
\]
\end{proof} \end{lemma}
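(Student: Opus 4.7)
The plan is to reduce the inequality $I_{\alpha}(\mu)\leq I(\mu)/\alpha$ to a one-parameter differential inequality in $\alpha$ for a fixed test function, and then widen the class of test functions by approximation. Fix $u\in\mathcal{D}_+(\mathcal{A})$ and $\varepsilon>0$ and study
\[
\phi(\alpha) = -\int_{\mathbb{R}^{d}} \log\!\left(\frac{\alpha R_{\alpha}u+\varepsilon}{u+\varepsilon}\right) d\mu, \quad \alpha>0.
\]
The target is $\phi(\alpha)\leq I(\mu)/\alpha$ for every admissible $(u,\varepsilon)$; taking the supremum over $u\in\mathcal{D}_+(\mathcal{A})$ and $\varepsilon>0$ then yields the restricted version of the statement, and an extension step upgrades it to the full supremum over $b\mathcal{B}_+(\mathbb{R}^d)$.

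First I would observe that $\phi(\infty)=0$: the resolvent convergence $\alpha R_{\alpha}u\to u$ as $\alpha\to\infty$, together with the uniform bound $\|\alpha R_{\alpha}u\|_\infty\leq\|u\|_\infty$, lets me push the limit through the logarithm via dominated convergence. Then I would differentiate, using $\tfrac{d}{d\alpha}R_{\alpha}=-R_{\alpha}^{2}$ (the resolvent equation) and $\mathcal{A}R_{\alpha}v=\alpha R_{\alpha}v-v$, to obtain
\[
\phi'(\alpha)=\int_{\mathbb{R}^{d}} \frac{\mathcal{A}R_{\alpha}^{2}u}{\alpha R_{\alpha}u+\varepsilon}\, d\mu.
\]
The decisive algebraic move is the elementary identity
\[
(\alpha R_{\alpha}^{2}u-R_{\alpha}u)\bigl(\alpha^{2}R_{\alpha}^{2}u-\alpha R_{\alpha}u\bigr)=\alpha(\alpha R_{\alpha}^{2}u-R_{\alpha}u)^{2}\geq 0,
\]
which, after dividing by the positive factor $(\alpha R_{\alpha}u+\varepsilon)(\alpha^{2}R_{\alpha}^{2}u+\varepsilon)$, gives the pointwise comparison
\[
\frac{\mathcal{A}R_{\alpha}^{2}u}{\alpha R_{\alpha}u+\varepsilon}\geq\frac{\mathcal{A}R_{\alpha}^{2}u}{\alpha^{2}R_{\alpha}^{2}u+\varepsilon}=\frac{1}{\alpha^{2}}\cdot\frac{\mathcal{A}w}{w+\varepsilon/\alpha^{2}}, \quad w:=R_{\alpha}^{2}u\in\mathcal{D}_+(\mathcal{A}).
\]
Integrating against $\mu$ and invoking the definition of $I(\mu)$ gives $\phi'(\alpha)\geq -I(\mu)/\alpha^{2}$, and integrating from $\alpha$ to $\infty$ (using $\phi(\infty)=0$) yields $\phi(\alpha)\leq I(\mu)/\alpha$.

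The remaining task is to enlarge the class of $u$ from $\mathcal{D}_+(\mathcal{A})$ to $b\mathcal{B}_+(\mathbb{R}^{d})$ in the supremum defining $I_{\alpha}(\mu)$. I would do this in two stages. Stage one: for $g\in bC_+(\mathbb{R}^{d})\cap L^{2}(\mathbb{R}^{d};\mathsf{m})$, apply the Yosida approximation $\beta R_{\beta}g\in\mathcal{D}_+(\mathcal{A})$ with $\beta R_{\beta}g\to g$ pointwise and uniformly bounded as $\beta\to\infty$, so the bound transfers to continuous $g$ by dominated convergence inside the log. Stage two: for an arbitrary $v\in b\mathcal{B}_+(\mathbb{R}^{d})$, approximate by $g_n\in bC_+$ in $L^{1}(\mu+\mu_{\alpha})$, where $\mu_{\alpha}(A):=\int\alpha R_{\alpha}(x,A)\, d\mu(x)$ is the dual-transported measure; this choice is precisely tailored so that $\|v-g_n\|_{L^{1}(\mu_{\alpha})}\to 0$ forces $\|\alpha R_{\alpha}v-\alpha R_{\alpha}g_n\|_{L^{1}(\mu)}\to 0$ as well.

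The main obstacle I anticipate is this approximation step, because the logarithm is nonlinear and the convergence inside the integral must be handled without uniform control of the log. The saving observation is that one never needs to bound the log uniformly: it suffices that both numerator and denominator inside the log converge in $L^{1}(\mu)$ while staying bounded away from $0$, which is guaranteed by the strictly positive floor $\varepsilon$ (or $v+\varepsilon$). The introduction of the auxiliary measure $\mu_{\alpha}$ is exactly the bookkeeping device that makes the two one-sided $L^{1}$-controls (on $v$ itself and on $\alpha R_{\alpha}v$) interlock, after which everything reduces to routine manipulation with resolvents.
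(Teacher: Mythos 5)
Your proposal follows essentially the same route as the paper's proof: the same functional $\phi(\alpha)$, differentiation via the resolvent equation, the same algebraic comparison yielding $\phi'(\alpha)\geq -I(\mu)/\alpha^{2}$, integration from $\alpha$ to $\infty$ using $\phi(\infty)=0$, and the same two-stage extension (Yosida approximation, then $L^{1}(\mu+\mu_{\alpha})$ approximation) to pass from $\mathcal{D}_{+}(\mathcal{A})$ to $b\mathcal{B}_{+}(\mathbb{R}^{d})$. The only difference is cosmetic: you state the key algebraic identity in its correct form $(\alpha R_{\alpha}^{2}u-R_{\alpha}u)(\alpha^{2}R_{\alpha}^{2}u-\alpha R_{\alpha}u)=\alpha(\alpha R_{\alpha}^{2}u-R_{\alpha}u)^{2}$, whereas the paper's displayed version contains a typo in the second factor.
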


\begin{lemma} \label{lem:7.1.3} If $I\left(\mu\right)<\infty$,
then $\mu\ll m$. \end{lemma}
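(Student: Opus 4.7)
The plan is to prove the contrapositive: if $\mu$ is not absolutely continuous with respect to $m$, then $I(\mu)=\infty$. By Lemma~\ref{lem:,7.1.2} we have $\alpha I_{\alpha}(\mu)\leqq I(\mu)$, so it suffices to exhibit some $\alpha>0$ for which $I_{\alpha}(\mu)=\infty$. Since the variational infimum defining $I_{\alpha}$ is taken over $u\in b\mathcal{B}_{+}(\mathbb{R}^{d})$, we are free to plug in indicator functions.

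By hypothesis there exists a Borel set $A\subset\mathbb{R}^{d}$ with $m(A)=0$ and $\mu(A)>0$. First I would establish that $\alpha R_{\alpha}\mathbf{1}_{A}\equiv 0$. Because $m$ is an invariant measure for the symmetric Markovian semigroup $\{T_{t}\}$, we have $\int T_{t}\mathbf{1}_{A}\,dm=\int\mathbf{1}_{A}\,dm=0$, so $T_{t}\mathbf{1}_{A}=0$ $m$-a.e. Under the standing assumption that $p_{t}(\cdot|x)\ll m$ for every $x$ (which holds for all convolution semigroups of symmetric infinitely divisible laws with absolutely continuous marginals treated in this paper, including the Brownian, Cauchy, and stable examples), this upgrades to the pointwise statement $T_{t}\mathbf{1}_{A}(x)=p_{t}(A|x)=0$ for every $x$, and hence $\alpha R_{\alpha}\mathbf{1}_{A}(x)=0$ for every $x\in\mathbb{R}^{d}$.

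Now I would plug $u=\mathbf{1}_{A}$ into the defining integral of $I_{\alpha}$ and let $\varepsilon\downarrow 0$. A direct evaluation, splitting the integral over $A$ and $A^{c}$, yields
\begin{equation*}
-\int_{\mathbb{R}^{d}}\log\!\left(\frac{\alpha R_{\alpha}\mathbf{1}_{A}+\varepsilon}{\mathbf{1}_{A}+\varepsilon}\right)d\mu
=\int_{\mathbb{R}^{d}}\log\!\left(\frac{\mathbf{1}_{A}+\varepsilon}{\varepsilon}\right)d\mu
=\mu(A)\,\log\!\left(\frac{1+\varepsilon}{\varepsilon}\right),
\end{equation*}
which tends to $+\infty$ as $\varepsilon\downarrow 0$, since $\mu(A)>0$. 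Taking the supremum over $(u,\varepsilon)$ in the definition of $I_{\alpha}$ thus gives $I_{\alpha}(\mu)=+\infty$, and by Lemma~\ref{lem:,7.1.2} this forces $I(\mu)\geqq\alpha I_{\alpha}(\mu)=+\infty$, contradicting $I(\mu)<\infty$.

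The main obstacle is the pointwise (rather than merely $m$-a.e.) vanishing of $\alpha R_{\alpha}\mathbf{1}_{A}$, since $\mu$ is permitted to charge sets of $m$-measure zero and a careless choice of representative would invalidate the computation. For the Lévy/convolution semigroups considered here this is immediate from absolute continuity of $p_{t}(\cdot|x)$ for every $x$; for a generic symmetric Markovian semigroup one would instead use the canonical quasi-continuous modification and the approximation argument (passing from $b\mathcal{B}_{+}$ through $bC_{+}$ via $\beta R_{\beta}f$) already deployed at the end of the proof of Lemma~\ref{lem:,7.1.2}.
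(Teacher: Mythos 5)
Your proof is correct, and it is worth noting that the paper itself states this lemma \emph{without} proof: the text that follows the lemma statement in the appendix is already the setup for the subsequent corollary on $\mathcal{E}(f,f)$, so there is no in-paper argument to compare against. Your route --- contrapositive, pick a Borel set $A$ with $m(A)=0$ and $\mu(A)>0$, plug $u=\mathbf{1}_{A}$ into the variational expression for $I_{\alpha}$, observe that $\alpha R_{\alpha}\mathbf{1}_{A}$ vanishes identically so the integrand equals $\log\frac{\varepsilon}{1+\varepsilon}$ on $A$ and $0$ on $A^{c}$, let $\varepsilon\downarrow0$ to force $I_{\alpha}(\mu)=\infty$, then invoke the bound $I_{\alpha}(\mu)\leqq I(\mu)/\alpha$ from the preceding lemma --- is the standard argument for this fact in the Donsker--Varadhan/Dirichlet-form literature, and every step checks out. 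You also correctly isolate the one genuinely delicate point: since $\mu$ may charge $m$-null sets, the vanishing of $\alpha R_{\alpha}\mathbf{1}_{A}$ must hold $\mu$-a.e.\ (on both $A$ and $A^{c}$, since a positive resolvent on $A^{c}$ would contribute a positive term that could offset the divergence), not merely $m$-a.e., and for the convolution semigroups actually used in the main results (Gaussian, Cauchy, symmetric stable) this follows from $p_{t}(\cdot|x)\ll\mathsf{m}$ for every $x$. The only quibble is that your parenthetical claim that this absolute continuity holds for all symmetric infinitely divisible laws treated in the paper is slightly too strong --- a compound Poisson component places an atom at the starting point --- but you flag the issue yourself and indicate the quasi-continuity fix for the general case, and the lemma is only invoked where transition densities exist, so the argument is sound where it is needed.
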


If we take a non-negative function that is monotonically increasing,
$f_{n}\in C_{0}\left(\mathbb{R}^{d}\right)$, that point-wise converges
to $f\in bC_{+}\left(\mathbb{R}^{d}\right)$, we have, 
\[
\int_{\mathbb{R}^{d}}\frac{f-\alpha R_{\alpha}f}{R_{\alpha}f+\varepsilon}d\mu=\lim_{n\rightarrow\infty}\int_{\mathbb{R}^{d}}\frac{f_{n}-\alpha R_{\alpha}f_{n}}{R_{\alpha}f_{n}+\varepsilon}d\mu,\ \alpha>0.
\]
Therefore, defining the function, $\mathcal{D}_{+}\left(\hat{\mathcal{A}}\right)$,
as 
\[
\mathcal{D}_{+}\left(\hat{\mathcal{A}}\right)=\left\{ R_{\alpha}f\left|\alpha>0,\ f\in bC_{+}\left(\mathbb{R}^{d}\right),\ f\cancel{\equiv}0\right.\right\} ,
\]
if we set $\hat{\mathcal{A}}\phi=\alpha R_{\alpha}f-f$ with regard
to $\phi=R_{\alpha}f\in\mathcal{D}_{+}\left(\hat{\mathcal{A}}\right)$,
we have the following result. \begin{cor} For $f\in\mathcal{F}$,
\[
\mathcal{E}\left(f,f\right)=\sup_{u\in\mathcal{D}_{+}\left(\hat{\mathcal{A}}\right),\varepsilon>0}\int_{\mathbb{R}^{d}}\frac{-\hat{\mathcal{A}}u}{u+\varepsilon}f^{2}d\mathsf{m}.
\]
\end{cor}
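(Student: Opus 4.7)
The plan is to deduce the Corollary from Theorem~\ref{prop:rate variational} by three reductions: first scale $f$ so that $f^{2}\mathsf{m}$ is a probability measure, then identify $\hat{\mathcal{A}}$ with $\mathcal{A}$ on the resolvent-generated domain $\mathcal{D}_{+}(\hat{\mathcal{A}})$, and finally replace the larger test class $\mathcal{B}_{b}^{+}$ used in the definition of the Donsker--Varadhan rate function $I$ by the smaller class $\mathcal{D}_{+}(\hat{\mathcal{A}})$ through approximation. If $f\equiv 0$ a.e.\ then both sides vanish, so I may set $c=\|f\|_{L^{2}(\mathsf{m})}^{2}>0$ and $g=f/\sqrt{c}$, making $\mu:=g^{2}\mathsf{m}$ a probability measure; both $\mathcal{E}(f,f)=c\,\mathcal{E}(g,g)$ and the integrand on the right-hand side scale linearly in $c$, so it suffices to prove the identity when $f^{2}\mathsf{m}$ is a probability measure.

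The operator identification is a direct consequence of the resolvent equation: for $\phi=R_{\alpha}h\in\mathcal{D}_{+}(\hat{\mathcal{A}})$, one has $\mathcal{A}R_{\alpha}h=\alpha R_{\alpha}h-h=\hat{\mathcal{A}}(R_{\alpha}h)$, so $\mathcal{D}_{+}(\hat{\mathcal{A}})\subset\mathcal{D}(\mathcal{A})$ and the two operators coincide there. With this in hand, Theorem~\ref{prop:rate variational} applied to $g^{2}\mathsf{m}$ gives
\[
\mathcal{E}(g,g)=I(g^{2}\mathsf{m})=\sup_{u\in\mathcal{B}_{b}^{+},\varepsilon>0}\int_{\mathbb{R}^{d}}\frac{-\mathcal{A}u}{u+\varepsilon}\,g^{2}\,d\mathsf{m},
\]
where I use that $\mathcal{A}$ annihilates constants so $\mathcal{A}u_{\varepsilon}=\mathcal{A}u$. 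Restricting to the subclass $\mathcal{D}_{+}(\hat{\mathcal{A}})$ and undoing the scaling then yields the easy inequality $\sup_{u\in\mathcal{D}_{+}(\hat{\mathcal{A}}),\varepsilon>0}\int\frac{-\hat{\mathcal{A}}u}{u+\varepsilon}f^{2}\,d\mathsf{m}\le\mathcal{E}(f,f)$.

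The converse inequality --- that the restricted supremum over $\mathcal{D}_{+}(\hat{\mathcal{A}})$ already saturates the Dirichlet form --- is the principal obstacle. My plan is to imitate the density argument from the second half of the proof of Lemma~\ref{lem:,7.1.2}: given a candidate $v\in\mathcal{B}_{b}^{+}$ used to define $I$, approximate $v$ by its resolvent image $\alpha R_{\alpha}v\in\mathcal{D}_{+}(\hat{\mathcal{A}})$, using both $\alpha R_{\alpha}v\to v$ (pointwise a.e.\ and in $L^{2}$) and the convergence identity flagged in the remark immediately preceding the Corollary, namely that for monotone $f_{n}\in C_{0}$ with $f_{n}\uparrow h\in bC_{+}$ one has $\int\frac{f_{n}-\alpha R_{\alpha}f_{n}}{R_{\alpha}f_{n}+\varepsilon}\,d\mu\to\int\frac{h-\alpha R_{\alpha}h}{R_{\alpha}h+\varepsilon}\,d\mu$. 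The technical delicacy, and the main obstacle, is that the denominator $R_{\alpha}v+\varepsilon$ can be small where $v$ is small, so the ratio is fragile without the positive cushion $\varepsilon$; keeping the outer supremum over $\varepsilon>0$ and invoking Fatou's lemma (or bounded convergence, according to the direction of monotonicity) along the approximating sequence should absorb this and close the equality.
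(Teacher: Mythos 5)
Your overall architecture --- normalize so that $f^{2}\mathsf{m}$ is a probability measure, identify $\hat{\mathcal{A}}$ with $\mathcal{A}$ on $\mathcal{D}_{+}(\hat{\mathcal{A}})$ via the resolvent equation $\mathcal{A}R_{\alpha}h=\alpha R_{\alpha}h-h$, obtain ``$\leqq$'' by restricting the supremum, and then fight for ``$\geqq$'' --- is the right reading of this Corollary. The paper itself supplies no explicit proof (the statement follows the remark defining $\hat{\mathcal{A}}$ with only ``we have the following result''), and the intended logic is indeed that the Corollary is Theorem~\ref{prop:rate variational} with the test class cut down to resolvents, so routing through that theorem is legitimate. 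One caveat on your first reduction: Theorem~\ref{prop:rate variational} applied to $\mu=g^{2}\mathsf{m}$ yields $I(\mu)=\mathcal{E}(\sqrt{g^{2}},\sqrt{g^{2}})=\mathcal{E}(\left|g\right|,\left|g\right|)$, not $\mathcal{E}(g,g)$. Since the right-hand side of the Corollary sees $f$ only through $f^{2}$, the identity can only hold for $f\geqq0$ (for the jump-type forms of Theorem~\ref{Thm:InfiniteDivisible} one has $\mathcal{E}(\left|f\right|,\left|f\right|)\leqq\mathcal{E}(f,f)$ with strict inequality possible when $f$ changes sign); you should either assume $f\geqq0$ or note that the Corollary is only ever applied to $f=\sqrt{M^{\pi}}\geqq0$.

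The genuine gap is the saturation step, which is the entire content of the Corollary. Your plan is to approximate a test function $v\in\mathcal{B}_{b}^{+}(\mathbb{R}^{d})$ by $\alpha R_{\alpha}v$, but (i) $\alpha R_{\alpha}v=R_{\alpha}(\alpha v)$ lies in $\mathcal{D}_{+}(\hat{\mathcal{A}})$ only if $\alpha v\in bC_{+}(\mathbb{R}^{d})$, so for Borel $v$ you need the two-stage approximation of Eqs.~\eqref{eq:7.1.8}--\eqref{eq:7.1.9} (continuous approximants in $L^{1}(\mu_{\alpha}+\mu)$ first, then resolvents); and (ii) unlike the logarithmic functional handled there, the functional $u\mapsto\int_{\mathbb{R}^{d}}\frac{-\mathcal{A}u}{u+\varepsilon}f^{2}d\mathsf{m}$ is not even defined for $u\notin\mathcal{D}(\mathcal{A})$, and its continuity along $\alpha R_{\alpha}v\to v$ requires controlling $\mathcal{A}(\alpha R_{\alpha}v)=\alpha\left(\alpha R_{\alpha}v-v\right)$, which converges only in $L^{2}$ and is not dominated in a way that pairs with $f^{2}\in L^{1}$; ``Fatou or bounded convergence should absorb this'' is not yet an argument. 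A cleaner closure, available from what the paper has already proved, is to observe that the lower bound never tests $I(\mu)$ against anything outside $\mathcal{D}_{+}(\hat{\mathcal{A}})$: in the proof of Lemma~\ref{lem:,7.1.2} the rate function enters only through the test functions $R_{\alpha}^{2}u$ with $u=R_{\alpha}h$, $h\in bC_{+}(\mathbb{R}^{d})$, which lie in $\mathcal{D}_{+}(\hat{\mathcal{A}})$, so that chain actually proves $I_{\alpha}(\mu)\leqq\hat{I}(\mu)/\alpha$ with $\hat{I}$ the restricted supremum of the Corollary; feeding this into the first half of the proof of Theorem~\ref{prop:rate variational} gives $\alpha\left\langle \sqrt{f},\sqrt{f}-\alpha R_{\alpha}\sqrt{f}\right\rangle _{\mathsf{m}}\leqq\hat{I}(f\cdot\mathsf{m})$, hence $\mathcal{E}(\sqrt{f},\sqrt{f})\leqq\hat{I}(f\cdot\mathsf{m})$ on letting $\alpha\to\infty$, which is exactly the missing inequality without any density argument.
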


Proof of Theorem~\ref{prop:rate variational}. \begin{proof} We
first show $I\left(\mu\right)\geqq I_{\mathcal{E}}\left(\mu\right)$.
Assume $I\left(\mu\right)<\infty$. Since, $\mu\ll m$, we have, $f=\frac{d\mu}{d\mathsf{m}}$
and let$f^{n}=\sqrt{f}\wedge n$. From $\log\left(1-x\right)\leqq-x$
and $-\infty<\frac{f^{n}-\alpha R_{\alpha}f^{n}}{f^{n}+\varepsilon}<1,\ \left(-\infty<x<1\right)$,
we have 
\[
\int_{\mathbb{R}^{d}}\log\left(\frac{\alpha R_{\alpha}f^{n}+\varepsilon}{f^{n}+\varepsilon}\right)fd\mathsf{m}=\int_{\mathbb{R}^{d}}\log\left(1-\frac{f^{n}-\alpha R_{\alpha}f^{n}}{f^{n}+\varepsilon}\right)fd\mathsf{m}\leqq-\int_{\mathbb{R}^{d}}\frac{f^{n}-\alpha R_{\alpha}f^{n}}{f^{n}+\varepsilon}fd\mathsf{m}.
\]
Thus, 
\begin{alignat}{1}
\int_{\mathbb{R}^{d}}\frac{f^{n}-\alpha R_{\alpha}f^{n}}{f^{n}+\varepsilon}fd\mathsf{m} & \leqq-\int_{\mathbb{R}^{d}}\log\left(\frac{\alpha R_{\alpha}f^{n}+\varepsilon}{f^{n}+\varepsilon}\right)fd\mathsf{m}\label{eq:7.1.11}\\
 & \leqq\underbrace{-\inf_{u\in b\mathcal{B}_{+}\left(\mathbb{R}^{d}\right),\varepsilon>0}\int_{\mathbb{R}^{d}}\log\left(\frac{\alpha R_{\alpha}u+\varepsilon}{u+\varepsilon}\right)d\mu}_{I_{\alpha}\left(f\cdot\mathsf{m}\right)}.\nonumber 
\end{alignat}
For the euqlity, 
\begin{alignat*}{1}
\frac{f^{n}-\alpha R_{\alpha}f^{n}}{f^{n}+\varepsilon}f & =\frac{f^{n}-\alpha R_{\alpha}f^{n}}{f^{n}+\varepsilon}f1_{\left\{ \sqrt{f}\leqq n\right\} }+\frac{f^{n}-\alpha R_{\alpha}f^{n}}{f^{n}+\varepsilon}f1_{\left\{ \sqrt{f}>n\right\} }\\
 & =\underbrace{\frac{\sqrt{f}-\alpha R_{\alpha}f^{n}}{\sqrt{f}+\varepsilon}f1_{\left\{ \sqrt{f}\leqq n\right\} }}_{\left(a\right)}+\underbrace{\frac{n-\alpha R_{\alpha}f^{n}}{n+\varepsilon}f\left\{ \sqrt{f}>n\right\} }_{\left(b\right)},
\end{alignat*}
the absolute value of $\left(a\right)$ and $\left(b\right)$ is evaluated
from above by $\left(\sqrt{f}+\alpha R_{\alpha}f^{n}\right)\sqrt{f}\in L^{1}\left(\mathbb{R}^{d};\mathsf{m}\right)$
and $\frac{n}{n+\varepsilon}f<f\in L^{1}\left(\mathbb{R}^{d};\mathsf{m}\right)$,
respectively. Therefore, from the bounded convergence theorem, we
have 
\[
\lim_{n\rightarrow\infty}\int\frac{f^{n}-\alpha R_{\alpha}f^{n}}{f^{n}+\varepsilon}fd\mathsf{m}=\int_{\mathbb{R}^{d}}\frac{\sqrt{f}-\alpha R_{\alpha}\sqrt{f}}{\sqrt{f}+\varepsilon}fd\mathsf{m}.
\]
When $\varepsilon\rightarrow0$, from Eq.~(\ref{eq:7.1.11}), we
have 
\[
\int_{\mathbb{R}^{d}}\sqrt{f}\left(\sqrt{f}-\alpha R_{\alpha}\sqrt{f}\right)d\mathsf{m}\leqq I_{\alpha}\left(f\cdot\mathsf{m}\right).
\]
Therefore, from Lemma~(\ref{lem:,7.1.2}), we can shown 
\[
\alpha\left\langle \sqrt{f},\sqrt{f}-\alpha R_{\alpha}\sqrt{f}\right\rangle _{\mathsf{m}}\leqq I\left(f\cdot\mathsf{m}\right)<\infty,
\]
and from Lemma~\ref{lem:4.3}, this implies $\sqrt{f}\in\mathcal{F}$
and $\mathcal{E}\left(\sqrt{f},\sqrt{f}\right)\leqq I\left(f\cdot\mathsf{m}\right)$.

Next, we show $I\left(\mu\right)\leqq I_{\mathcal{E}}\left(\mu\right)$.
Let $\phi\in\mathcal{D}_{+}\left(\mathcal{A}\right)$ and define the
semigroup, $P_{t}^{\phi}$, as 
\[
P_{t}^{\phi}f\left(x\right)=\mathbb{E}_{x}\left[\left(\frac{\phi\left(X_{t}\right)+\varepsilon}{\phi\left(X_{0}\right)+\varepsilon}\right)\exp\left(-\int_{0}^{t}\frac{\mathcal{A}\phi}{\phi+\varepsilon}\left(X_{s}\right)ds\right)f\left(X_{t}\right)\right].
\]
Here, $P_{t}^{\phi}$ is $\left(\phi+\varepsilon\right)^{2}m$-symmetric
and satisfies $P_{t}^{\phi}1\leqq1$. For the probability measure,
$\mu=fm\in\mathcal{P}$, such that $\sqrt{f}\in\mathcal{F}$, let
\[
S_{t}^{\phi}\sqrt{f}\left(x\right)=\mathbb{E}_{x}\left[\exp\left(-\int_{0}^{t}\frac{\mathcal{A}\phi}{\phi+\varepsilon}\left(X_{s}\right)ds\right)\sqrt{f}\left(X_{t}\right)\right].
\]
Then, the following holds: 
\begin{alignat*}{1}
\int_{\mathbb{R}^{d}}\left(S_{t}^{\phi}\sqrt{f}\right)^{2}d\mathsf{m} & =\int_{\mathbb{R}^{d}}\left(\phi+\varepsilon\right)^{2}\left(P_{t}^{\phi}\left(\frac{\sqrt{f}}{\phi+\varepsilon}\right)\right)^{2}d\mathsf{m}\\
 & \leqq\int_{\mathbb{R}^{d}}\left(\phi+\varepsilon\right)^{2}P_{t}^{\phi}\left(\left(\frac{\sqrt{f}}{\phi+\varepsilon}\right)^{2}\right)d\mathsf{m}\\
 & \leqq\int_{\mathbb{R}^{d}}\left(\phi+\varepsilon\right)^{2}\left(\frac{\sqrt{f}}{\phi+\varepsilon}\right)^{2}d\mathsf{m}\\
 & =\int_{\mathbb{R}^{d}}fd\mathsf{m}.
\end{alignat*}
Therefore, we have 
\[
0\leqq\lim_{t\rightarrow0}\frac{1}{t}\left\langle \sqrt{f}-S_{t}^{\phi}\sqrt{f},\sqrt{f}\right\rangle _{\mathsf{m}}=\mathcal{E}\left(\sqrt{f},\sqrt{f}\right)+\int_{\mathbb{R}^{d}}\frac{\mathcal{A}\phi}{\phi+\varepsilon}fd\mathsf{m}
\]
and $\mathcal{E}\left(\sqrt{f},\sqrt{f}\right)\geqq I\left(f\cdot\mathsf{m}\right)$
has been shown. \end{proof}

\section{Proof of \ref{Thm:InfiniteDivisible}\label{app:InfiniteDivisible}}

Define the transition semigroup, $T_{t}$, of the Markov process,
$\left\{ X_{t}\right\} $, as a convolution semigroup, 
\[
T_{t}f\left(x\right)=\int_{\mathbb{R}^{d}}f\left(x+y\right)\nu_{t}\left(dy\right),\ t>0,x\in\mathbb{R}^{d},f\in\mathcal{B}_{+}\left(\mathbb{R}^{d}\right).
\]
Here, the characteristic function for $\nu_{t}\left(\cdot\right)$
is given as 
\begin{alignat*}{1}
\mathbb{E}\left[e^{itzY}\right]= & \exp\left[t\left\{ -\frac{1}{2}\left\langle z,Az\right\rangle +\int_{\mathbb{R}^{d}}\left(e^{i\left\langle z,y\right\rangle }-1-\frac{i\left\langle z,y\right\rangle }{1+\left|y\right|^{2}}\right)\nu\left(dy\right)+i\left\langle x,z\right\rangle \right\} \right].
\end{alignat*}
Additionally, $X_{1}\overset{\textrm{d}}{=}X$. Define $\psi\left(z\right)$
as 
\[
\psi\left(z\right)=-\frac{1}{t}\log\mathbb{E}\left[e^{itzY}\right].
\]
For the Fourier transformation of the integrable function, $f$, on
$\mathbb{R}^{d}$, 
\[
\hat{f}\left(z\right)=\frac{1}{\left(2\pi\right)^{d/2}}\int_{\mathbb{R}^{d}}e^{i\left\langle z,y\right\rangle }f\left(y\right)dy,\ z\in\mathbb{R}^{d},
\]
the Parseval formula, 
\[
\left\langle f,g\right\rangle =\left\langle \hat{f},\overline{\hat{g}}\right\rangle ,\ f,g\in L^{2}\left(\mathbb{R}^{d};\mathsf{m}\right),
\]
holds. Here, because $\widehat{T_{t}f}=\hat{\nu}_{t}\cdot\hat{f}$,
we have 
\begin{alignat*}{1}
\mathcal{E}^{\left(t\right)}\left(f,f\right)= & \frac{1}{t}\left\langle f-T_{t}f,f\right\rangle \\
= & \frac{1}{t}\int_{\mathbb{R}^{d}}\left(\hat{f}\left(z\right)-\hat{\nu}_{t}\left(z\right)\hat{f}\left(z\right)\right)\bar{\hat{f}}\left(z\right)dz\\
= & \int_{\mathbb{R}^{d}}\left\Vert \hat{f}\left(z\right)\right\Vert ^{2}\frac{1-\exp\left(-t\psi\left(z\right)\right)}{t}dz.
\end{alignat*}
When $t\downarrow0$, the last integral is monotonically increasing
and converges to $\int_{\mathbb{R}^{d}}\left|\hat{f}\left(z\right)\right|^{2}\psi\left(z\right)dz$.
Therefore, the Dirichlet form associated with the predictive distribution,
$\hat{p}\left(y\left|x\right.\right)$, is 
\begin{align*}
 & \mathcal{F}=f\in L^{2}\left(\mathbb{R}^{d};\mathsf{m}\right):\int_{\mathbb{R}^{d}}\left\Vert \hat{f}\left(z\right)\right\Vert ^{2}\psi\left(z\right)dz<\infty,\\
 & \mathcal{E}\left(f,g\right)=\int_{\mathbb{R}^{d}}\hat{f}\left(z\right)\hat{g}\left(z\right)\psi\left(z\right)dz,\quad f,g\in\mathcal{F}.
\end{align*}

\end{document}